\newtheorem{theorem}{Theorem}[section]
\newtheorem{lemma}[theorem]{Lemma}
\newtheorem{proposition}[theorem]{Proposition}
\newtheorem{claim}[theorem]{Claim}
\theoremstyle{definition}
\newtheorem{definition}[theorem]{Definition}
\newcommand{\E}{\ensuremath{\mathbb{E}}}
\renewcommand{\Pr}{\ensuremath{\mathbb{P}}}
\newcommand{\sm}{\setminus}
\newcommand{\eps}{\varepsilon}
\newcommand{\paths}{\mathcal{P}}
\newcommand{\pbeg}{\mathrm{beg}}
\newcommand{\pend}{\mathrm{end}}
\newcommand{\pint}{\mathrm{int}}
\newcommand{\reg}{\mathrm{reg}}
\newcommand{\rand}{\mathrm{rand}}
\newcommand{\hnpk}{\ensuremath{H^{(k)}_{n, p}}}
\newcommand{\hnqk}{\ensuremath{H^{(k)}_{n, q}}}
\newcommand{\kFrac}{\ensuremath{\left\lceil\frac{k}{k-\ell}\right\rceil}}
\begin{document}
\author{Andrew McDowell\thanks{{\tt andrew.mcdowell@kcl.ac.uk}. Informatics Department, King's College London, London WC2R 2LS,
United Kingdom.}~~and Richard Mycroft\thanks{{\tt r.mycroft@bham.ac.uk}. School of Mathematics, University of Birmingham, Birmingham B15 2TT, United Kingdom. Research supported by EPSRC grant EP/M011771/1.}}
\title{Hamilton $\ell$-cycles in randomly perturbed hypergraphs}
\date{}
\maketitle

\begin{abstract}
We prove that for integers $2 \leq \ell < k$ and a small constant $c$, if a $k$-uniform hypergraph with linear minimum codegree is randomly `perturbed' by changing non-edges to edges independently at random with probability $p \geq O(n^{-(k-\ell)-c})$, then with high probability the resulting $k$-uniform hypergraph contains a Hamilton $\ell$-cycle. This complements a recent analogous result for Hamilton $1$-cycles due to Krivelevich, Kwan and Sudakov, and a comparable theorem in the graph case due to Bohman, Frieze and Martin.
\end{abstract}

\bibliographystyle{amsplain}
\section{Introduction}
Hamilton cycles are one of the most fundamental and widely studied structures in graph theory. We call a graph Hamiltonian if it contains a Hamilton cycle, that is, a cycle that covers all of the vertices of the graph. Many properties of Hamilton cycles in graphs are well understood, for example, minimum degree conditions~\cite{D} and random thresholds~\cite{P} 
that guarantee the existence of a Hamilton cycle.

A \emph{$k$-uniform hypergraph}, or \emph{$k$-graph}, is comprised of a vertex set and an edge set, where each edge consists of $k$ vertices. This generalises the notion of a graph (the case $k=2$). For $k$-graphs there a number of distinct but equally natural extensions of Hamiltonicity. Indeed, for $1 \leq \ell \leq k-1$ we say that a $k$-graph is an \emph{$\ell$-cycle} if there exists a cyclic ordering of the vertices of the graph such that every edge consists of $k$ consecutive vertices and each edge intersects the subsequent edge (in the natural order of the edges) in exactly $\ell$ vertices. We say that a $k$-graph~$H$ contains a Hamilton $\ell$-cycle if it contains an $\ell$-cycle as a spanning subgraph. Note that a necessary condition for this is that $k-\ell$ divides $n$, since every edge of the cycle contains exactly~$k-\ell$ vertices which were not contained in the previous edge.

Given integers $n$ and $k$ and a probability $p$, we can form a random $k$-graph with vertex set~$[n]$ by including each $k$-tuple of vertices as an edge with probability $p$, independently of all other choices. We denote the resulting random $k$-graph by $\hnpk$. This is the most well-studied notion of random $k$-graph, and generalises the Erd\H{o}s-Renyi random graph $G_{n, p} = H_{n, p}^{(2)}$.

\subsection{Threshold probabilities for Hamilton $\ell$-cycles in random $k$-graphs}

One of the most natural questions to ask is to identify threshold probabilities for the existence of Hamilton cycles in $\hnpk$. In the graph case the following theorem established very precise bounds on the critical probability for this property. This was independently proved by Bollob\'as~\cite{B} and by Koml\'os and Szemer\'edi~\cite{KSz}.

\begin{theorem}[\cite{B, KSz}] \label{randomthresholdsgraphs}
For every function $\omega(n)$ for which $\omega(n) \to \infty$ as $n \to \infty$, if $p \geq \frac{\log n + \log \log n + \omega(n)}{n}$ then with high probability $G_{n, p}$ contains a Hamilton cycle, whilst if $p \leq \frac{\log n + \log \log n - \omega(n)}{n}$ then with high probability $G_{n, p}$ does not contain a Hamilton cycle.
\end{theorem}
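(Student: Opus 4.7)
\bigskip

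\textbf{Proof proposal.} The theorem has two halves requiring very different arguments. The lower bound follows from the obvious obstruction that low-degree vertices cannot lie on a Hamilton cycle; the upper bound requires an expansion-and-rotation argument in the style of P\'osa.

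\emph{Lower bound.} Let $X$ count the vertices of $G_{n,p}$ of degree at most $1$. A direct computation using $np = \log n + \log\log n - \omega(n)$ gives
\begin{align*}
\E X &= n(1-p)^{n-1} + n(n-1)p(1-p)^{n-2} \\
 &= (1+o(1))(1+np)\, n e^{-np} = (1+o(1))\, e^{\omega(n)} \to \infty.
\end{align*}
A routine second-moment calculation shows $\mathrm{Var}(X) = O(\E X)$, because the events that distinct vertices have small degree are only weakly correlated (via a single shared potential edge); Chebyshev's inequality then yields $X \geq 1$ whp, and any such vertex precludes a Hamilton cycle.

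\emph{Upper bound.} The plan is a two-round exposure. Write $p_1 = (\log n + \log\log n + \omega(n)/2)/n$ and $p_2 = \omega(n)/(2n)$, and realise $G_{n,p}$ as (dominating) the union of independent copies $G_1 \sim G_{n,p_1}$ and $G_2 \sim G_{n,p_2}$. The argument then has three steps. First, an analogous first-moment calculation shows that whp every vertex of $G_1$ already has degree at least~$2$, and a union bound over subsets gives that whp every $S \subseteq V(G_1)$ with $|S| \leq n/4$ satisfies $|N_{G_1}(S)| \geq 2|S|$; both estimates are easy since $n p_1 - \log n \to \infty$. Second, by the classical rotation-extension lemma of P\'osa, these two properties imply that for any longest path $P$ in (any supergraph of) $G_1$, the set of \emph{boosters} -- non-edges whose addition either lengthens $P$ or closes it into a Hamilton cycle -- has size $\Omega(n^2)$. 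Third, I would sprinkle $G_2$ in at most $n$ booster phases: in each phase the probability that none of the $\Omega(n^2)$ available boosters is sampled is at most $(1-p_2)^{\Omega(n^2)} = \exp(-\Omega(n \omega(n))) = o(1/n)$, so a union bound produces a Hamilton cycle whp.

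\emph{Main obstacle.} The delicate point is achieving the sharp $(1+o(1))$-threshold constant. We cannot afford to lose a multiplicative factor when splitting $p = p_1 + p_2 - p_1 p_2$: $p_1$ must lie just barely above the minimum-degree threshold so that the calculation above gives minimum degree $\geq 2$ in $G_1$, yet $p_2 n^2$ must diverge fast enough to survive the union bound over $n$ rotation phases. The choice $p_2 = \omega(n)/(2n)$ threads this needle, but verifying that P\'osa's expansion hypothesis genuinely holds for $G_{n,p_1}$ at this tight value of $p_1$, and that the $\Omega(n^2)$ booster count persists at every stage of the rotation, is the technical heart of the argument and the step I would expect to be most error-prone; the original proofs of Bollob\'as and of Koml\'os--Szemer\'edi organise this bookkeeping in superficially different ways, but the rotation-plus-sprinkling skeleton above captures their common core.
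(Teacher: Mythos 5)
This statement is one the paper does not prove: Theorem~\ref{randomthresholdsgraphs} is quoted from Bollob\'as and Koml\'os--Szemer\'edi as background, so there is no in-paper argument to compare against, and I can only assess your sketch on its own terms. Your lower bound is essentially right: counting vertices of degree at most $1$, the first- and second-moment computation is the standard one (you should reduce to $\omega(n)\leq\log\log n$ by monotonicity so that the asymptotics for $e^{-np}$ are valid, but that is routine). The upper-bound skeleton --- rotation--extension, an $\Omega(n^2)$ booster count, then sprinkling --- is indeed the common core of the known proofs. However, two steps are genuinely gapped as written. First, the expansion statement ``every $S$ with $|S|\leq n/4$ has $|N_{G_1}(S)|\geq 2|S|$, by a union bound, easy since $np_1-\log n\to\infty$'' is not easy and not obtainable by a naive union bound: at $p_1=(\log n+\log\log n+\omega/2)/n$ the graph whp still contains many vertices of bounded degree (e.g.\ degree exactly $2$), so the union bound over small sets diverges; the standard repair proves separately that whp no two vertices of degree at most $\varepsilon\log n$ are within distance $4$ and then treats small- and large-degree vertices in $S$ differently. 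You flag this as the delicate point, which is fair, but the claim that it follows from an easy union bound is false as stated. (You also tacitly use connectivity of $G_1$ when converting a long cycle into a longer path in the booster count; this holds whp but must be said.)

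Second, and more seriously, the sprinkling accounting in your third step is invalid. You run ``at most $n$ booster phases'' and bound each phase's failure probability by $(1-p_2)^{\Omega(n^2)}=\exp(-\Omega(n\omega(n)))$. That computation exposes the \emph{entire} probability $p_2$ afresh in every phase, i.e.\ it budgets total edge probability about $np_2=\omega(n)/2$, which you do not have. If instead you split $p_2$ honestly into $n$ independent rounds of probability about $p_2/n$, each round fails with probability $\exp(-\Omega(\omega(n)))$, which is $o(1)$ but nowhere near $o(1/n)$, so the union bound over $n$ phases collapses unless $\omega(n)\geq C\log n$ --- at which point the sharp constant the theorem is about has been lost. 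The standard fix dispenses with phases: condition on $e(G_2)=m$ with $m\geq \omega(n)n/8$ (which holds whp), expose the edges of $G_2$ one at a time in uniformly random order, observe that the expansion property persists under adding edges so that whenever the current graph is not Hamiltonian it has at least $cn^2$ boosters, hence each newly exposed edge is a booster with probability at least $c/2$ given the history; since at most $n$ booster hits are ever needed and the number of hits stochastically dominates a binomial with mean at least $c\omega(n)n/16\gg n$, a Chernoff bound finishes. With that replacement (and the two repairs above) your outline does become a correct proof; as written, step three would not survive at the stated edge probability.
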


More recently Dudek and Frieze~\cite{DF1, DF2} largely answered the analogous question for $k$-graphs through bounds established in a pair of papers, which are combined together in the following theorem (the case $k=3, \ell=1$ was previously addressed by Frieze~\cite{F}).

\begin{theorem}[\cite{DF1, DF2}] \label{randomthresholdshypergraphs}
For every $\alpha > 0$, every $1 \leq \ell \leq k-1$ and every function $\omega(n)$ for which $\omega(n) \to \infty$ as $n \to \infty$ there exists $c, C > 0$ such that if 
$$p \geq \begin{cases}
C n^{-(k-1)} \log n & \mbox{if }\ell=1, k=3,\\
\omega(n) n^{-(k-1)} \log n & \mbox{if }\ell=1, k \geq 4,\\
\omega(n) n^{-(k-2)} & \mbox{if } \ell=2, \\
C n^{-(k-\ell)} & \mbox{if }3\leq \ell \leq k-1, \end{cases}
$$
then with high probability $\hnpk$ contains a Hamilton $\ell$-cycle, whilst if 
$$p \leq \begin{cases}
c n^{-(k-1)} \log n & \mbox{if }\ell=1,\\
c n^{-(k-\ell)} & \mbox{if } \ell \geq 2, \end{cases}
$$
then with high probability $\hnpk$ does not contain a Hamilton $\ell$-cycle. 
\end{theorem}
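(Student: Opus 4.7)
The plan is to split the theorem into a non-existence direction (small $p$) and an existence direction (large $p$).

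For non-existence I would use a first moment argument. When $\ell \geq 2$, the number of Hamilton $\ell$-cycles on vertex set $[n]$ is $\frac{n!}{(2n/(k-\ell))\cdot ((k-\ell)!)^{n/(k-\ell)}}$, which by Stirling is $\Theta(1)^n \cdot (n/(k-\ell))^n$, and each has $n/(k-\ell)$ edges. So at $p = cn^{-(k-\ell)}$ the expected number of Hamilton $\ell$-cycles in $\hnpk$ is of order $(C_0 c)^{n/(k-\ell)}$ for a constant $C_0 = C_0(k,\ell)$, which is $o(1)$ for small enough $c$; Markov's inequality then gives the result. When $\ell = 1$ the expected number of Hamilton $1$-cycles is not $o(1)$ at the correct threshold, so instead I would count ``bad'' vertices that cannot lie on any Hamilton $1$-cycle -- for instance, vertices incident to at most one edge in $\hnpk$. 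A standard Poisson/second-moment calculation shows that for $p \leq cn^{-(k-1)}\log n$ with $c$ small such vertices survive whp, obstructing Hamiltonicity.

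For existence I would follow the \emph{absorbing method} of R\"odl, Ruci\'nski and Szemer\'edi, adapted to the sparse random setting. The argument proceeds in three stages. First, using the probabilistic method I construct a short \emph{absorbing} $\ell$-path $P_{\mathrm{abs}} \subseteq \hnpk$ covering $o(n)$ vertices, with the property that for every sufficiently small $X \subseteq V \setminus V(P_{\mathrm{abs}})$ there is an $\ell$-path on $V(P_{\mathrm{abs}}) \cup X$ with the same endpoints as $P_{\mathrm{abs}}$. This requires proving that each vertex lies in many small ``absorbing gadgets'' inside $\hnpk$ and then selecting $P_{\mathrm{abs}}$ so that every candidate vertex has many still-available gadgets. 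Second, I would reserve a small random set $R$ of vertices and cover all but $o(n)$ vertices of $V \setminus (R \cup V(P_{\mathrm{abs}}))$ by a constant number of long $\ell$-paths, using a random greedy construction whose output is concentrated by Azuma's inequality or a second-moment argument. Third, a connecting lemma using edges through $R$ chains the pieces into a single long $\ell$-path with the correct endpoints, which is closed and completed by absorbing the leftover vertices into $P_{\mathrm{abs}}$.

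The main obstacle is building the absorber at a density essentially equal to the threshold: each gadget is on a constant number of vertices and appears in $\hnpk$ with probability only $p^{O(1)}$, so the expected number of gadgets available to any given vertex is barely large enough to allow the probabilistic selection of $P_{\mathrm{abs}}$ to succeed. The case $\ell = 1$ is the most delicate, because the connecting lemma must produce ``bridges'' that are $\ell$-paths of logarithmic rather than constant length, which is exactly what forces the extra $\log n$ factor in the threshold; for $\ell = 2$ the weaker $\omega(n)$ slack is needed so that the cover and connection steps concentrate correctly, whereas for $\ell \geq 3$ the larger overlap between consecutive edges makes both connecting and absorbing comparatively cheap and yields the clean $Cn^{-(k-\ell)}$ threshold.
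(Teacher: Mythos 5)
The paper you are reading does not prove this statement: Theorem~\ref{randomthresholdshypergraphs} is quoted from Dudek and Frieze \cite{DF1,DF2} (with the $k=3$, $\ell=1$ case from \cite{F}), so your sketch has to be measured against those proofs, not against anything in this paper. Your non-existence direction is essentially sound: for $\ell\ge 2$ the crude bound of at most $n!$ potential Hamilton $\ell$-cycles, each with $n/(k-\ell)$ edges, already gives expected count at most $n!\,p^{n/(k-\ell)}=O\bigl(\mathrm{poly}(n)\,(c^{1/(k-\ell)}/e)^n\bigr)=o(1)$ for small $c$ (your exact symmetry factor is not quite right, but only the $\Theta(1)^n$ order matters), and for $\ell=1$ you should work with isolated vertices rather than ``vertices incident to at most one edge'' (degree-one vertices are perfectly legal in a loose Hamilton cycle): at $p\le cn^{-(k-1)}\log n$ with $c<(k-1)!$ the expected number of isolated vertices is $n^{1-c/(k-1)!}\to\infty$ and a second-moment argument finishes.

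The existence direction, however, has a genuine gap. Your absorbing scheme repeatedly anchors constant-size structures at \emph{prescribed} vertex tuples: connecting the end $a$ of one path to the end $b$ of another through the reservoir, and rerouting $P_{\mathrm{abs}}$ at its fixed endpoints. At the stated densities this fails. For fixed $\ell$-tuples $a,b$, the expected number of $\ell$-paths of constant length $t$ in $\hnpk$ with ends $a$ and $b$ is $O\bigl(n^{t(k-\ell)-\ell}p^t\bigr)$, which is $O(C^t n^{-\ell})$ for $p=Cn^{-(k-\ell)}$ and $O(\omega(n)^t n^{-2})$ for $\ell=2$; by Markov, whp no such connector exists. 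Worse, for $\ell\ge 3$, where the slack over the first-moment bound is only a constant factor, a prescribed $\ell$-set lies in no random edge at all with probability $(1-p)^{\binom{n-\ell}{k-\ell}}=\Omega(1)$, so each required connection or absorption fails with probability bounded away from zero and ``with high probability'' cannot be recovered. (Connectors of length $\Omega(\log n)$ change the expectation count, but then you need a concentration argument you have not supplied, and they do not cure the $\ell\ge3$ endpoint problem.) This is precisely the obstacle the present paper's perturbed setting is designed to sidestep: the dense hypergraph $H$ is used to fan each prescribed end out into $\Theta(n^{(k-\ell)t})$ candidate extensions before any random edge is exposed (see the remarks accompanying Lemma~\ref{Lemma:ConnectingPaths}), and no such crutch is available in $\hnpk$ alone. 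Accordingly, Dudek and Frieze do not argue by absorption: the case $\ell=1$ is obtained by a reduction to the Johansson--Kahn--Vu factor theorem \cite{JKV} (the $\log n$ there is a coupon-collector/degree phenomenon, as in your own lower bound, not the length of connecting bridges), and the cases $\ell\ge 2$ are proved by a delicate second-moment computation on the number of Hamilton $\ell$-cycles, whose constant-factor failure at $\ell=2$ is the true source of the extra $\omega(n)$. As written, the existence half of your proposal would not become a proof without a fundamentally different mechanism.
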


In particular, the upper and lower bounds on the critical probability are separated by a~$\omega(n)$-factor in the case $\ell = 1$ for $k \geq 4$, and in the case $\ell = 2$ for $k \geq 3$. In all other cases the difference is a constant factor.

\subsection{Dirac-type conditions for Hamilton $\ell$-cycles in $k$-graphs}

Dirac's Theorem, a classical result of graph theory~\cite{D}, states that any graph $G$ on $n \geq 3$ vertices with minimum degree $\delta(G) \geq n/2$ admits a Hamilton cycle. A great deal of research in recent years has focussed on finding analogous results for hypergraphs, using the following notions of minimum degree. Given a $k$-graph $H$ and a set $T$ of vertices of $H$ we define $\deg_H(T)$, the \emph{degree} of $T$, to be the number of edges in $H$ which contain $T$ as a subset (we omit the subscript when $H$ is clear from the context). For an integer $1\leq t \leq k-1$ the \emph{minimum $t$-degree} of $H$, denoted $\delta_t(H)$, is then defined to be the minimum value of $\deg(T)$ taken over all sets $T \subseteq V(H)$ with $|T| = t$. In particular, the parameters $\delta_1(H)$ and $\delta_{k-1}(H)$ are referred to as the \emph{minimum vertex degree} and \emph{minimum codegree} of $G$ respectively.

The following theorem collects together together the results of a series of papers by numerous authors over several years; it establishes asymptotically for every $k$ and $\ell$ the best-possible minimum codegree condition which guarantees the existence of a Hamilton $\ell$-cycle in a $k$-graph.

\begin{theorem}[\cite{HS, KKMO, KMO, KO, RRSz1, RRSz2}] \label{codeg}
  For any $k \geq 3$, $1 \leq \ell < k$ and $\eta > 0$, there exists~$n_0$ such that if $n \geq
  n_0$ is divisible by $k-\ell$ and $H$ is a $k$-graph on $n$ vertices with $$\delta_{k-1}(H) \geq 
  \begin{cases}
    \left( \frac{1}{2} + \eta \right) n& \mbox{ if $k-\ell $ divides $k$,} \\
    \left(\frac{1}{\lceil 
    \frac{k}{k-\ell} \rceil(k-\ell)}+\eta\right) n & \mbox{otherwise,}
  \end{cases} 
  $$ 
  then $H$ contains a Hamilton $\ell$-cycle. 
\end{theorem}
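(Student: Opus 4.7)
The plan is to apply the absorption method of R\"odl, Ruci\'nski and Szemer\'edi, which has become the canonical tool for Dirac-type Hamiltonicity results in hypergraphs. At the heart of the method sit three lemmas: an \emph{Absorbing Lemma} producing a short $\ell$-path $P_A$ with the property that for any small set $S \subseteq V(H) \sm V(P_A)$ with $|S|$ divisible by $k-\ell$ there is an $\ell$-path on $V(P_A)\cup S$ with the same endpoints as $P_A$; a \emph{Reservoir Lemma} producing a small set $R$ such that every pair of disjoint ordered $(k-1)$-tuples can be joined through $R$ by a short $\ell$-path; and an \emph{Almost Cover Lemma} asserting that, after removing any small set of vertices, what remains still contains an $\ell$-path covering all but an $\eta$-fraction of its vertices.

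Given these three lemmas, the Hamilton $\ell$-cycle is assembled as follows. First select $P_A$; then select $R$ from the complementary vertex set; then apply the Almost Cover Lemma inside $H[V(H)\sm(V(P_A)\cup R)]$ to produce a long $\ell$-path $P'$; use the Reservoir Lemma to link the endpoints of $P'$ to those of $P_A$, obtaining an $\ell$-cycle that misses only a small set $S$; finally invoke the absorbing property of $P_A$ to engulf $S$, adjusting divisibility by shuffling a handful of vertices into or out of $R$ as needed. Both the Absorbing Lemma and the Reservoir Lemma can be derived from the codegree hypothesis by fairly standard probabilistic constructions, using only that every $(k-1)$-tuple lies in many edges and that short absorbing gadgets for a single vertex (or for a short block of $k-\ell$ vertices) can be greedily extended.

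The main obstacle is the Almost Cover Lemma, because the constants $\tfrac12+\eta$ and $\tfrac{1}{\lceil k/(k-\ell)\rceil (k-\ell)}+\eta$ appearing in the statement are the sharp extremal thresholds for long $\ell$-paths. When $k-\ell$ divides $k$ the bound $\tfrac12+\eta$ is forced by a bipartite-type ``space barrier'' construction, whereas in the remaining cases the extremal example is a different, more intricate vertex partition whose edges avoid a particular link structure; the two thresholds are not interchangeable. The standard route is to apply the weak hypergraph regularity lemma, pass to a cluster $k$-graph inheriting the codegree bound, and locate an almost-perfect $\ell$-path factor there via a fractional matching or Hall-type argument. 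Pushing the constant down to the sharp value then requires a stability analysis: either the hypergraph is $\eta$-far from the extremal configuration, in which case the regularity-based argument yields a long $\ell$-path with slack, or it is $\eta$-close, in which case one works directly inside the near-extremal structure. It is precisely this extremal sensitivity, and the need to tailor the stability step to the correct extremal family, that causes the statement to be a compilation of the several papers cited, each handling a specific range of $(k,\ell)$ with bespoke techniques.
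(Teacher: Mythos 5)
You should first be aware that the paper does not prove this statement at all: Theorem~\ref{codeg} is quoted as background, compiled from the cited works of R\"odl--Ruci\'nski--Szemer\'edi, K\"uhn--Osthus, H\`an--Schacht, Keevash--K\"uhn--Mycroft--Osthus and K\"uhn--Mycroft--Osthus, so there is no internal proof to compare yours against; I can only measure your outline against those original arguments. Measured that way, your sketch names the right framework (absorbing path, reservoir/connecting step, long almost-spanning path, then absorption of the leftover), which is indeed how the cited papers proceed, but it is a scaffold rather than a proof: every step that actually forces the specific constants $\tfrac12+\eta$ and $\tfrac{1}{\lceil k/(k-\ell)\rceil(k-\ell)}+\eta$ is deferred to ``the standard route,'' and that is precisely the content those six papers spend their length establishing.

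Beyond the incompleteness, several concrete points would fail as stated. First, your claim that the Reservoir/Connecting Lemma follows by ``fairly standard probabilistic constructions, using only that every $(k-1)$-tuple lies in many edges'' is false in the divisible case: in the parity-type extremal example (edges $e$ with $|e\cap A|$ of fixed parity, $|A|\approx n/2$) the codegree is about $n/2$ yet consecutive edges of a tight path must have equal intersection size with $A$, so arbitrary tuples cannot be connected; the bound $\left(\tfrac12+\eta\right)n$ is needed already for connecting (and absorbing) when $\ell=k-1$, not only for the almost cover. (Also, the ends of $\ell$-paths are $\ell$-tuples, not $(k-1)$-tuples, so your reservoir statement is aimed at the wrong objects except in the tight case.) Second, no stability analysis is required for this theorem: the $+\eta$ slack means the asymptotic result is proved directly by regularity/fractional-matching or extension arguments, and stability enters only for the exact thresholds (the results of~\cite{RRSz3, GM, CM, HZ} mentioned in the paper); invoking it here conflates the asymptotic and exact problems. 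Third, you have the extremal examples attached to the wrong cases: the space barrier (all edges meeting a small set $A$, counted against the fact that each vertex lies in at most $\lceil k/(k-\ell)\rceil$ edges of the cycle while the cycle has $n/(k-\ell)$ edges) is what produces $\tfrac{1}{\lceil k/(k-\ell)\rceil(k-\ell)}$ in the non-divisible case, whereas the $\tfrac12$ in the case $k-\ell\mid k$ comes from the parity-type partition construction. So as a blind proposal this is a reasonable roadmap of the literature, but the heart of the theorem --- an almost cover by $\ell$-paths, and in the divisible case connection and absorption, at exactly these codegree constants --- is asserted rather than proved.
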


By contrast, the exact value of this threshold (for large $n$) has only been found in a small number of cases, namely for $k=3, \ell=2$ by R\"odl, Ruci\'nski and Szemer\'edi~\cite{RRSz3}, for $k=4, \ell=2$ by Garbe and Mycroft~\cite{GM}, for $k = 3$ and $\ell = 1$ by Czygrinow and Molla~\cite{CM} and for any $k \geq 3$ and $\ell < k/2$ by Han and Zhao~\cite{HZ}. For other degree conditions, less still is known; indeed the only cases in which the minimum $t$-degree threshold for a Hamilton $\ell$-cycle is known even asymptotically are the cases $k \geq 3, \ell < k/2, t={k-2}$ (due to Bastos, Mota, Schacht, Schnitzer and Schulenburg~\cite{BMSSS} with previous results for the case $(k,\ell, t) = (3,1,1)$ due to Bu\ss, H\`an and Schacht~\cite{BHS} and Han and Zhao~\cite{HZ2}) and $(k,\ell, t) = (3,2,1)$ (due to Reiher, R\"odl, Ruci\'nski, Schacht and Szemer\'edi~\cite{RRRSSz}).

For a much more detailed exposition of the results briefly described in this subsection we refer the reader to the recent surveys by K\"uhn and Osthus~\cite{KO2}, R\"odl and Ruci\'nski~\cite{RR} and Zhao~\cite{Z}.

\subsection{Hamilton $\ell$-cycles in randomly perturbed $k$-graphs}

Comparing the results of the previous two subsections, we observe that the random $k$-graphs around the threshold probability for containing a Hamilton $\ell$-cycle typically have far fewer edges than those whose minimum degree is close to the minimum degree threshold to force such a cycle. This invites the question of how far a typical graph of lower degree is from being Hamiltonian, motivating the following definition: given a $k$-graph $H$, the \emph{$p$-perturbation of $H$} is the $k$-graph~$H^+_p$ on the same vertex set in which every edge of $H$ is an edge of $H^+_p$ and additionally each $k$-tuple of vertices which is not an edge of $H$ is an edge of $H^+_p$ with probability~$p$, independently of all other $k$-tuples. For graphs this setup was considered by Bohman, Frieze and Martin~\cite{BFM}, who showed that if $G$ has linear minimum degree then adding a linear number of random edges suffices to ensure that $G_p^+$ has a Hamilton cycle.

\begin{theorem}[\cite{BFM}] \label{perturbedcyclesgraphs}
For every $\alpha > 0$ there exists $\lambda > 0$ such that if $G$ is a graph on $n$ vertices with $\delta(G) \geq \alpha n$ and $p \geq \lambda/n$ then with high probability $G_p^+$ contains a Hamilton cycle.
\end{theorem}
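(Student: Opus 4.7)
My plan is to combine Pósa's rotation-extension technique with a two-round exposure of the random edges. Write $G_p^+ = G \cup R_1 \cup R_2$, where $R_1$ and $R_2$ are independent copies of a random graph $G_{n,q}$ with $q$ of order $\lambda/n$ chosen so that $R_1 \cup R_2$ is stochastically dominated by the random portion of $G_p^+$. I use $G \cup R_1$ to build a path that covers almost all vertices, and reserve $R_2$ for the final closure; the independence between the two rounds lets me treat $R_2$ as fresh randomness, uncorrelated with anything discovered in the first round.

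In the first round I extend a path greedily. Given a current path $P$ with endpoint $v$, if $v$ has a $G$-neighbour outside $P$ then extend directly; otherwise apply Pósa rotations using edges of $G$. Since $\delta(G) \geq \alpha n$ and the minimum degree of $G$ is preserved up to an $o(n)$ loss throughout the argument, these rotations produce a set $S$ of $\Omega(n)$ alternative endpoints, each terminating a path on the same vertex set as $P$. While $V \setminus V(P)$ is still linear, the expected number of $R_1$-edges between $S$ and $V \setminus V(P)$ is $\Omega(\lambda n)$, so for large enough $\lambda$ such an edge exists with high probability, yielding a genuine extension. Iterating and union-bounding over at most $n$ steps, $G \cup R_1$ contains a path $P^*$ covering all but some sublinear leftover set $L$.

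To turn $P^*$ into a Hamilton cycle I need two further ingredients: an absorbing structure to swallow $L$, and one last edge closing the cycle. Before running the greedy extension, I would plant a short \emph{absorbing path} $A$ using a dedicated portion of $R_1$: for each candidate vertex $w$, the minimum degree of $G$ supplies many gadgets $xwy$ with $xw, wy, xy \in G$, and a handful of $R_1$-edges chain these gadgets into a path of linear length that can absorb any sufficiently small set of vertices while preserving its endpoints. After building $P^*$ and absorbing $L$ into a Hamilton-length path, I perform Pósa rotations at both endpoints using $G$'s edges to generate $\Omega(n^2)$ potential endpoint-pairs; a single edge of $R_2$ between any such pair closes the cycle, and since $R_2$ is independent of this entire construction and contains $\Omega(n)$ random edges, such a closing edge exists with high probability.

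The main obstacle is the probabilistic bookkeeping. The rotation-extension process is adaptive: each extension changes the current path and hence the next rotation set, so I must be careful to reveal the random edges only once and to maintain the independence needed for each probabilistic claim. Splitting $p$ into a constant number of independent sprinkling rounds (one for the absorber, one for the extensions, one for the closure) handles the coarse independence, and within the extension phase itself one can apply the method of deferred decisions, revealing $R_1$-edges only as they are queried. Given $\delta(G) \geq \alpha n$, the structural components of the argument (existence of a linear-sized rotation set, survival of minimum degree after removing the absorber, existence of many absorber gadgets) are all routine; the delicate part is verifying that a single instance of $G_p^+$ simultaneously supports all of these tasks.
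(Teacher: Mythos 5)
The paper does not prove this statement itself (it is quoted from Bohman, Frieze and Martin), so I am judging your proposal on its own terms; your skeleton -- P\'osa rotation--extension plus sprinkling of the random edges in independent rounds -- is indeed the route BFM take. However, there is a concrete gap: your absorbing path. You claim that the minimum degree condition ``supplies many gadgets $xwy$ with $xw,wy,xy\in G$''; such a gadget is a triangle through $w$, and a graph with $\delta(G)\ge\alpha n$ need not contain a single triangle (take $G$ to be the complete bipartite graph $K_{\alpha n,(1-\alpha)n}$). So the absorber, as described, need not exist. This is not a cosmetic issue, because the absorber is load-bearing in your plan: your per-step ``expected number of $R_1$-edges between $S$ and $V\setminus V(P)$ is $\Omega(\lambda n)$'' argument collapses exactly when $V\setminus V(P)$ becomes sublinear, which is why you introduced the absorber to swallow the leftover set $L$ in the first place. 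Repairing the absorber with random edges in place of the edge $xy$ is possible in principle but is a genuinely delicate reservoir-type construction, not the routine step you assert.

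The standard way to close this gap (and what BFM actually do) is to dispense with the absorber entirely and run a longest-path/booster argument to the very end. If the current longest path in the graph built so far cannot be extended, rotations at both endpoints (using the $G$-edges and $\delta(G)\ge\alpha n$, via P\'osa's lemma) give $\Omega(\alpha^2 n^2)$ pairs $\{a,b\}$ whose addition either lengthens the path or creates a cycle on $V(P)$; since every component of $G$ has at least $\alpha n+1$ vertices, $G$ together with $\Theta(n)$ random edges is connected with high probability, so a non-Hamilton cycle again yields a strictly longer path. One then exposes the $\lambda n$ random edges one at a time: conditionally on the history, each new edge is such a ``booster'' with probability $\Omega(\alpha^2)$, so for $\lambda=\lambda(\alpha)$ large the number of boosters stochastically dominates a binomial and exceeds $n$ with high probability, which suffices since at most $n$ improvements are ever needed. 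This one-edge-at-a-time accounting also resolves the adaptivity/fresh-randomness bookkeeping that you flag but do not settle: in your deferred-decisions scheme the same pair can be queried at many different steps, so the per-step ``fresh'' failure probabilities are not available without an additional argument bounding the number of revealed pairs. Your final closure step (rotating both ends of a Hamilton path and using an independent $R_2$-edge) is fine, but the middle of the argument needs the repair above.
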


More recently Krivelevich, Kwan and Sudakov~\cite{KKS} established a similar result for loose cycles in perturbed $k$-graphs of linear minimum codegree.

\begin{theorem}[\cite{KKS}] \label{perturbed1cycles}
For every $k \geq 3$ and every $\alpha > 0$ there exists $\lambda > 0$ such that if $H$ is a $k$-graph on $n$ vertices with $\delta_{k-1}(H) \geq \alpha n$ and $p \geq \lambda n^{1-k}$ then with high probability $H^+_p$ contains a Hamilton $1$-cycle .
\end{theorem}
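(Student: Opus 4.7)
The plan is to use the \emph{absorption method}, now standard for spanning-substructure problems in dense-plus-random settings. Write $H' := H^+_p$ and fix small constants $\mu \ll \alpha$. The strategy decomposes the construction of a Hamilton $1$-cycle into three phases: building an absorbing path, covering most of the remaining vertices by a long loose path, and closing the two pieces up using a reservoir.

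For the first phase, I would construct a loose \emph{absorbing path} $P_A \subseteq H'$ on a set $V_A$ of $\mu n$ vertices with the property that for every $X \subseteq V(H) \sm V_A$ with $|X| \leq \mu^2 n$ and $(k-1) \mid |X|$, there is a loose path in $H'$ from one endpoint of $P_A$ to the other covering exactly $V_A \cup X$. The construction proceeds by assembling, for each vertex $v$, a family of local \emph{absorbing gadgets}---each a constant-sized loose subpath admitting two realisations in $H'$ with identical endpoints, one covering $v$ and the other not. The linear codegree of $H$ ensures abundantly many candidate gadget "skeletons", while the random edges supply the specific edges needed to complete these skeletons into genuine absorbers. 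A random selection argument then extracts a suitable subcollection and concatenates it (again using $H$ plus a few random edges) into a single loose path with enough absorbing slots per vertex.

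For the second phase, I would reserve a small \emph{reservoir} $R \subseteq V(H) \sm V_A$ of size $o(n)$ such that any two suitable $(k-1)$-tuples of vertices outside $R$ can be joined by a short loose path through $R$; such $R$ exists by a standard random-choice argument using the codegree of $H$. Then, using the codegree of $H$ alone, a greedy extension produces a long loose path $P$ covering all of $V(H) \sm (V_A \cup R)$ except for at most $\mu^2 n / 2$ leftover vertices, since any $(k-1)$-tuple outside $R$ has codegree $\Theta(n)$ so the extension step remains available until the uncovered set is this small. Finally, I would splice $P$ and $P_A$ into a single loose cycle via two short connections routed through $R$, after which the absorbing property of $P_A$ is invoked to swallow the leftover vertices, yielding a Hamilton $1$-cycle.

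The principal obstacle is the first phase. Because $p = \lambda n^{1-k}$ gives only $\Theta(1)$ random edges per vertex in expectation, we cannot hope that a gadget for a specific vertex $v$ exists in any prescribed location; the random edges appear very sparsely and unpredictably. The delicate step is therefore to show, via a concentration argument such as Janson's inequality applied to the candidate gadgets (which in $H$ alone number polynomially in $n$ per vertex, while each requires only a bounded number of random edges to be completed), that w.h.p.\ every vertex has a linear number of fully realised absorbing gadgets in $H'$, so that a suitable subsample can be woven together into a valid absorbing path. Designing the gadget so that its "completion cost" in random edges is small, while still delivering the two-realisation property, is the key technical heart of the argument.
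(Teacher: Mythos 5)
There is a genuine gap, and it is in your second phase. You claim that ``using the codegree of $H$ alone, a greedy extension produces a long loose path $P$ covering all of $V(H) \sm (V_A \cup R)$ except for at most $\mu^2 n/2$ leftover vertices, since any $(k-1)$-tuple outside $R$ has codegree $\Theta(n)$.'' This is false for small $\alpha$. The codegree condition $\delta_{k-1}(H) \geq \alpha n$ gives at least $\alpha n$ extensions into the \emph{whole} vertex set; to continue the greedy extension you need an extension into the shrinking uncovered set $U$, and all you can guarantee is $\deg_H(S,U) \geq \alpha n - (n - |U|)$, which is positive only while $|U| > (1-\alpha)n$. So the greedy stalls after covering roughly $\alpha n$ vertices, leaving $\Omega(n)$ vertices uncovered --- far beyond the $\mu^2 n$ absorbing capacity of $P_A$, so the final absorption step cannot finish the cycle. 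Moreover this is not a defect of the greedy argument but of the plan itself: the extremal example used in this paper (Lemma~\ref{optimal}; all edges meeting a fixed set $A$ of size $\alpha n$, with $B = V \sm A$ independent) shows that \emph{any} loose path in such an $H$ has at most $2|A| = 2\alpha n$ edges, hence covers $O(\alpha k n)$ vertices, so no argument using only the edges of $H$ can cover almost all vertices by loose paths when $\alpha$ is small. The covering phase must use the random edges: at $p \geq \lambda n^{1-k}$ one has $\Theta(n)$ random edges, which (as remarked in Section~\ref{sec:conclusion}) suffice to give an almost-spanning collection of vertex-disjoint constant-length $1$-paths in $\hnpk$ alone; these are then connected and the leftover absorbed. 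Note also that this paper does not prove Theorem~\ref{perturbed1cycles} itself --- it is quoted from~\cite{KKS} --- and the alternative proof sketched here proceeds exactly by the route you are missing: random-edge path cover, plus absorbing and connecting lemmas adapted to $\ell = 1$.

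Your first and third phases are reasonable in outline (the composite absorber idea --- skeletons in $H$ completed by a bounded number of random edges, with concentration over the random edges --- is essentially the $F_{\reg}$/$F_{\rand}$ decomposition used in Section~\ref{sec:absorbing}, and short connections through a reservoir can indeed be made in $H$ alone for $\ell = 1$ using the codegree condition). But as written the absorber count is only asserted, and more importantly the plan as a whole fails at the cover step, so the argument does not go through.
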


These results can be viewed as demonstrating the fragility of graphs and $k$-graphs which do not contain a Hamilton $1$-cycle, as a relatively small perturbation of these graphs will create such a cycle with high probability. Alternatively, comparing these results to the random thresholds presented earlier suggests another interpretation in terms of how many random edges must be added to a $k$-graph to create a Hamilton 1-cycle. Indeed, Theorem~\ref{randomthresholdsgraphs} and the case $\ell=1$ of Theorem~\ref{randomthresholdshypergraphs} show that if we start with an empty $k$-graph we must add around $n \log n$ edges to achieve this, whereas Theorems~\ref{perturbedcyclesgraphs} and~\ref{perturbed1cycles} show that if we start with a $k$-graph of linear minimum codegree then we need only add $O(n)$ edges, i.e. we save a factor of $\log n$ compared to starting with an empty $k$-graph.

After their proof of Theorem~\ref{perturbed1cycles}, Krivelevich, Kwan and Sudakov highlighted two natural directions for further research. The first of these is to consider if statements analogous to Theorem~\ref{perturbed1cycles} hold for Hamilton $\ell$-cycles where $\ell \geq 2$. Secondly, Theorem~\ref{perturbed1cycles} pertains only to $k$-graphs of high minimum codegree, which is the strongest form of minimum degree condition for $k$-graphs, and it is natural to consider whether a weaker notion of minimum degree would suffice instead. 

Our main result in this paper is the following theorem, which gives conditions for the existence of a Hamilton $\ell$-cycle in a randomly perturbed $k$-graph for any $2 \leq \ell \leq k-1$. Together with Theorem~\ref{perturbed1cycles} this answers the question for all forms of Hamilton $\ell$-cycle in $k$-graphs of linear minimum codegree. Moreover, for $2 \leq \ell \leq k-2$ we actually only require a weaker form of minimum degree condition. 

\begin{theorem}[Main result]\label{main}
Fix integers $2 \leq \ell < k$ and define $\ell' := \max(\ell, k-\ell)$. For every $\alpha > 0$ there exists $c > 0$ such that if $H$ is a $k$-graph on $n$ vertices such that $\delta_{\ell'}(H) \geq \alpha n^{k-\ell'}$ and $k-\ell$ divides $n$, then for $p \geq n^{-(k-\ell)-c}$ the $k$-graph $H^+_p$ contains a Hamilton $\ell$-cycle with high probability.
\end{theorem}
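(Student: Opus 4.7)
The plan is to adapt the R\"odl--Ruci\'nski--Szemer\'edi absorbing method to the randomly perturbed setting, in the spirit of Krivelevich, Kwan and Sudakov's argument for Theorem~\ref{perturbed1cycles}. Concretely, the proof proceeds in three stages. First, I would construct an absorbing $\ell$-path $A$ in $H^+_p$ of sublinear length with the property that for any not-too-large set $Z'\subseteq V\setminus V(A)$ with $(k-\ell)\mid |Z'|$, the set $V(A)\cup Z'$ spans an $\ell$-path in $H^+_p$ with the same ordered endpoint $\ell$-tuples as $A$. The standard recipe is to define, for each ordered $(k-\ell)$-tuple $S$, an ``absorber gadget'' in which a short $\ell$-path backbone can be rerouted so as to include $S$. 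Using the density of $H$ one obtains a polynomial number of candidate backbones, and the random probability $p\geq n^{-(k-\ell)-c}$ ensures that each backbone can be completed to a full gadget by a bounded number of random edges. A random selection of these gadgets is then concatenated into the single $\ell$-path $A$ using short random bridges.

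After reserving $V(A)$ and a small random reservoir $R$ of vertices carrying a linear density of random edges, I would use the $\ell'$-degree condition $\delta_{\ell'}(H)\geq \alpha n^{k-\ell'}$ to find an $\ell$-path covering all but $o(n)$ vertices of $V\setminus(V(A)\cup R)$ using edges of $H$ alone. The choice $\ell'=\max(\ell,k-\ell)$ is the natural threshold at which iterative greedy extension succeeds: when $\ell\geq k-\ell$ one extends the end-$\ell$-tuple directly using its $(k-\ell)$-degree, while for $\ell<k-\ell$ one instead controls the addition of the new $(k-\ell)$-set via its $\ell$-degree, and in both cases $\delta_{\ell'}\geq \alpha n^{k-\ell'}$ guarantees many extensions at each step. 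Alternatively, one may translate the task to finding a near-perfect matching in a suitable auxiliary $k$-graph and splice matching edges into $\ell$-paths. The endpoints of the almost-spanning path are then joined to those of $A$ through short $\ell$-paths lying in $R$; a Janson-type computation provides these connecting paths whenever one allows only $O(1)$ random edges per connection. Finally, the leftover, consisting of the $o(n)$ vertices not reached by the long path together with the unused part of $R$, is fed into the absorber, closing the structure into a Hamilton $\ell$-cycle.

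The main obstacle I expect is the absorber design for general $\ell\geq 2$. In the loose case $\ell=1$ treated by Krivelevich--Kwan--Sudakov, each gadget interfaces with the rest of the path through a single vertex and absorbs one vertex at a time, which affords considerable flexibility. For $\ell\geq 2$ the interface is an $\ell$-tuple and the absorbed block must have size a multiple of $k-\ell$, forcing a more rigid gadget with prescribed endpoints and interior structure. One must design gadgets that simultaneously (i) use only a bounded number of random edges, which is essential in order to push $p$ all the way down to $n^{-(k-\ell)-c}$ rather than polylogarithmically above $n^{-(k-\ell)}$, and (ii) preserve the correct end-$\ell$-tuples after the rerouting step, which is delicate when $\ell$ and $k-\ell$ do not divide one another. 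Verifying that such gadgets exist in sufficient abundance, and can be chained into a valid $\ell$-path $A$, will be the most technically involved part of the argument.
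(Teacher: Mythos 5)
Your absorbing stage is broadly in the spirit of the actual proof (gadgets whose backbone lies in $H$ and which are completed by a bounded number of random edges, so that $p$ can be pushed to $n^{-(k-\ell)-c}$), but your second stage contains a genuine gap that the rest of the argument cannot repair. You claim that the condition $\delta_{\ell'}(H)\geq \alpha n^{k-\ell'}$ allows you to find an $\ell$-path covering all but $o(n)$ vertices \emph{using edges of $H$ alone}, either by greedy extension or via a near-perfect matching in $H$. This is false for small $\alpha$: take $H$ to consist of all $k$-sets meeting a fixed set $A$ of size $\alpha n$ (the construction of Lemma~\ref{optimal}). This $H$ has $\delta_{k-1}(H)\geq \alpha n - k$, hence satisfies every degree hypothesis in the theorem, yet every edge of $H$ uses a vertex of $A$ and each vertex lies in at most $\lceil k/(k-\ell)\rceil$ edges of an $\ell$-path, so any collection of vertex-disjoint $\ell$-paths in $H$ covers only $O(k\alpha n)$ vertices -- nowhere near $n-o(n)$. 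The greedy extension argument you sketch stalls for exactly this reason: once $\Theta(\alpha n)$ vertices are used, the $\alpha n^{k-\ell}$ extensions of an end $\ell$-tuple may all hit used vertices, and nothing in the hypothesis prevents this. Likewise $H$ above has no matching larger than $\alpha n$ edges, so the auxiliary-matching variant fails too. Nor can the random edges alone supply a single almost-spanning $\ell$-path at $p\approx n^{-(k-\ell)-c}$: by a first-moment computation (Lemma~\ref{firstmoment}) one cannot even find linearly many vertex-disjoint $\ell$-paths of length $m$ once $c>\ell/m$, so long paths are unavailable below the threshold $n^{-(k-\ell)}$.

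The missing idea is how the bulk of the vertices get covered. The paper does this entirely inside the random graph, but with paths of \emph{long constant} length $m$: an $\ell$-path of length $m$ is strictly balanced with density $1/\bigl((k-\ell)+\tfrac{\ell-1}{m}\bigr)$, so the Johansson--Kahn--Vu theorem gives a spanning $P$-factor of such paths in $H^{(k)}_{n,p}$ already when $p\geq n^{-(k-\ell)-c}$ with $c<(\ell-1)/m$; these linearly many constant-length paths are then stitched together (and to the absorbing structure) by the connecting lemma, which is also where the reservoir is consumed. This is precisely why the constant $c$ exists, depends on $\alpha$ (through $m$), and why the minimum degree condition plays no role in the covering step -- the role of $\ell'=\max(\ell,k-\ell)$ is only to guarantee both $\delta_\ell(H)=\Omega(n^{k-\ell})$ for connecting and $\delta_{k-\ell}(H)=\Omega(n^{\ell})$ for absorbing, not to make greedy covering work. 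Two further points you gloss over, though they are secondary: your absorber is required to handle \emph{arbitrary} leftover sets, whereas the degree hypothesis only yields absorbability of almost all $(k-\ell)$-tuples, so the leftover must be partitioned into good $(k-\ell)$-tuples (the paper does this with a Daykin--H\"aggkvist matching avoiding the bad family); and the linearly many absorbing gadgets cannot all be concatenated through a small reservoir, which is why the paper connects them in a separate round before the reservoir is chosen.
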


In particular, this shows that the number of random edges which need to be added to a $k$-graph of linear minimum codegree to guarantee the existence of a Hamilton $\ell$-cycle is $O(n^{\ell-c})$ for some constant $c > 0$. Perhaps surprisingly, this gives a saving of a polynomial factor in comparison to the number of random edges which mush be added to the empty $k$-graph to achieve this, which is around $n^{\ell}$ by Theorem~\ref{randomthresholdshypergraphs}. In other words, the `effect' of starting with a dense hypergraph is much stronger for $\ell \geq 2$ compared to the case $\ell = 1$, where we saved only a factor of $\log n$.

Theorem~\ref{main} is best-possible in the sense that it would not hold if the minimum degree condition $\delta_{\ell'}(H) \geq \alpha n^{k-\ell'}$ were replaced by any condition of the form $\delta_{\ell'}(H) \geq f(n) n^{k-\ell'}$ with $f(n) = o(n)$; we prove this assertion in Lemma~\ref{optimal}.

\subsection{Definitions and notation}

For integers $1 \leq \ell < k$ we say that a $k$-graph $P$ is an \emph{$\ell$-path} if its vertices can be linearly ordered so that every edge of $P$ consists of $k$ consecutive vertices and each edge of $P$ intersects the subsequent edge (in the natural order of the edges) in precisely $\ell$ vertices; the \emph{length} of $P$ is the number of edges of $P$. In particular, an $\ell$-path $P$ of length $m$ has $b := m(k-\ell)+\ell$ vertices, and we say that the sequence $(v_1, \dots, v_b)$ is a \emph{vertex sequence} for $P$ if $V(P) = \{v_1, \dots, v_b\}$ and the edges of $P$ are precisely the sets $\{v_{r(k-\ell)+1}, \dots, v_{r(k-\ell)+k}\}$ for $0 \leq r \leq m-1$. Note that~$P$ is uniquely determined by its vertex sequence, but there may be several vertex sequences for the same $\ell$-path. We say that an $\ell$-path $P$ is a \emph{path segment} or \emph{subpath} of an $\ell$-path $Q$ or $\ell$-cycle~$C$ to mean that $P$ appears as a subgraph of $Q$ or $C$.  

Given an $\ell$-path $P$ with vertex sequence $(v_1, \dots, v_b)$ we refer to the ordered $\ell$-tuples $P^{\pbeg} = (v_1, \dots, v_\ell)$ and $P^{\pend} = (v_{b-\ell+1}, \dots, v_b)$ as \emph{ends} of $P$. Since an $\ell$-path $P$ may have several vertex sequences, there may be multiple choices for ends of $P$ if a vertex sequence is not specified, but unless stated otherwise we make an arbitrary choice and simply refer to these as the ends of~$P$. The \emph{interior vertices} of $P$ are then the vertices which do not lie in either end of $P$, and we write $P^{\pint} := V(P) \sm (P^{\pbeg} \cup P^{\pend})$ for the set of interior vertices of $P$. Note that if $P$ and $Q$ are $\ell$-paths with $P^{\pend} = Q^{\pbeg}$ which have no vertices in common outside this set, then the $k$-graph $PQ$ with vertex set $V(P) \cup V(Q)$ and edge set $E(P) \cup E(Q)$ is an $\ell$-path also, with ends $P^{\pbeg}$ and $Q^{\pend}$. We will construct a Hamilton $\ell$-cycle in $H_p^+$ by connecting several $\ell$-paths in $H_p^+$ in this manner.

We say that a $k$-graph $H$ is \emph{$k$-partite} if there exists a partition of $V(H)$ into \emph{vertex classes} $V_1, \dots, V_k$ such that every edge $e \in H$ has $|e \cap V_i| = 1$ for each $i \in [k]$. Given a $k$-graph $H$ we write $e(H)$ for the number of edges of $H$, and $v(H)$ for the number of vertices of $H$. We also frequently identify a $k$-graph with its edge set, for example, writing $|H|$ for $e(H)$ and $e \in H$ to mean $e \in E(H)$. Given sets $S, T \subseteq V(H)$ we write $\deg_H(S, T)$ for the number of edges $e \in E(H)$ with $S \subseteq e$ and $e \sm S \subseteq T$. In other words $\deg_H(S, T)$ counts the number of ways to extend $S$ to an edge of $H$ by adding vertices from $T$. We omit the subscript when $H$ is clear from the context.

Given a function $\pi: U \to V$ and an ordered $k$-tuple $R = (u_1, \dots, u_k)$ of elements of $U$ we write $\pi(R)$ to denote the ordered $k$-tuple $(\pi(u_1), \dots, \pi(u_k))$. On the other hand, for an unordered subset $S \subseteq U$ we write $\pi(S)$ for the image of $S$ under $\pi$ in the usual manner. For an ordered $k$-tuple $R = (u_1, \dots, u_k)$ and an unordered set $S = \{v_1, \dots, v_k\}$ of size $k$ we sometimes abuse notation by writing $R = S$ to mean that $S$ and $u$ have precisely the same elements, that is, $\{u_1, \dots, u_k\} = \{v_1, \dots, v_k\}$.

Given a set $S$ and an integer $k$ we write $\binom{S}{k}$ for the set of subsets of $S$ of size $k$. We write $x \ll y$ to mean that for any $y > 0$ there exists $x_0 > 0$ such that for any $0 < x < x_0$ the following statement holds. Similar statements with more variables are defined similarly. Note carefully that this is not the same as the common usage of $\ll$ in probabilistic arguments in which we say $x \ll y$ if $x/y \to 0$; the latter usage of the symbol does not appear anywhere in this paper.

\subsection{Proof outline and structure of the paper}
The proof of Theorem~\ref{main} proceeds by an `absorbing' argument. This is a powerful technique for embedding large subgraphs in dense or random graphs and hypergraphs which has yielded many successes over the past two decades. To find a Hamilton $\ell$-cycle in a $k$-graph $H$, a typical absorbing argument consists of a `path cover lemma', an `absorbing lemma' and a `connecting lemma'. We follow the same top-level approach, but each of these three components must be tailored to the perturbing setting, as described below.

\medskip\noindent {\bf Path cover lemma.} We use a special case of a seminal theorem of Johanssen, Kahn and Vu regarding perfect tilings in $k$-graphs. This special case states that under the conditions of Theorem~\ref{main}, we can find a spanning collection $\paths$ of vertex-disjoint $\ell$-paths in $H_p^+$ of length close to $\frac{\ell-1}{c}$. In fact, we find these paths entirely in $\hnpk$, and do not appeal to the minimum degree condition of $H$ at all. The reason for this is shown by the construction we present in Lemma~\ref{optimal} to show that Theorem~\ref{main} is in a sense optimal; this construction demonstrates that some $k$-graphs $H$ satisfying $\delta_{k-1}(H) \geq \alpha n$ can provide only a few edges towards $\paths$. We note that since each path in $\paths$ has constant length, the size of $\paths$ is linear in $n$. This differs from typical previous applications of the absorbing method, in which we choose a constant number of paths of linear length.

\medskip \noindent {\bf Absorbing lemma.}
Our absorbing lemma states that, under the conditions of Theorem~\ref{main}, we can find a collection $\paths$ of vertex-disjoint $\ell$-paths of constant length in $H$ so that for almost all sets $S \in \binom{V(H)}{k-\ell}$ there are many paths $P \in \paths$ which can `absorb' $S$ in $H_p^+$. By this we mean that there is an $\ell$-path $Q$ in $H_p^+$ with the same ends as $P$ whose vertices are the vertices in $P$ and those of $S$. The point of this is that if $P$ is a path segment of an $\ell$-cycle $C$ which does not include any vertex of $S$, then we may replace $P$ by $Q$ in $C$ and thereby `absorb' the vertices of $S$ into $C$. 
To prove this lemma we first present an `absorbing structure' $F$, which contains an $\ell$-path $P$ and also a set $F_A$ of size $k-\ell$ which can be absorbed into $P$ in $F$. Additionally the edges of $F$ are partitioned into a `regular' part $F_{\reg}$ and a `random' part $F_{\rand}$. We show that almost all ordered $(k-\ell)$-tuples of vertices of $H$ extend to many copies of $F_{\reg}$ in $H$ (i.e. not using any random edges); in fact the number of extensions is a constant proportion of the maximum possible number of extensions. We then show that when we expose the random edges of $H_p^+$, with high probability many of these extensions $F_{\reg}$ gain the required edges of $F_{\rand}$ to form a copy of $F$. Together this shows that almost all ordered $(k-\ell)$-tuples of vertices of~$H$ extend to many copies of $F$ in $H_p^+$. We then randomly select a linear-size set of copies of~$F \sm F_A$, and show that almost all ordered $(k-\ell)$-tuples are extended to a copy of $F$ by many of these copies, so taking the paths $P$ from each of these copies then gives the desired collection of absorbing paths (after removing the small number of paths which intersect and adding a few extra paths to cover a small number of atypical vertices).

\medskip \noindent {\bf Connecting lemma.}
Our `connecting lemma' states that, under the conditions of Theorem~\ref{main}, given a collection $\paths$ of $\ell$-paths in $H$ we can find an $\ell$-cycle $C$ in $H^+_p$ which includes every path $P \in \paths$ as a path segment. To illustrate the proof of this, let $P$ and $Q$ be $\ell$-paths which we wish to connect. Then we use the minimum degree condition to show that there are many possible ways to extend $P$ and $Q$ each by $t := \kFrac-1$ edges in $H$ without overlapping. Indeed, the number of ways to do this is sufficient that, when we expose our random edges, with high probability some of these extensions are joined by $t$ random edges to form an $\ell$-path of length~$3t$ in $H_p^+$ which connects $P$ and $Q$ into a single long $\ell$-path. In fact we show that with high probability we can do this while avoiding any given small set of vertices, which allows us to iterate the connections to connect all the paths in $\paths$ into a cycle. We note that extending each of $P$ and $Q$ by $t$ edges from $H$ ensures that the $t$ random edges we use to complete the connection do not intersect the original vertices of $P$ or $Q$; this fact is crucial for us to have sufficiently many connecting paths in the random graph.

\medskip \noindent {\bf Proof of Theorem~\ref{main}.}
Finally, to prove Theorem~\ref{main} we combine the aforementioned lemmas in the following way. We form $H_p^+$ by exposing edges in four rounds, permitting two applications of the connecting lemma and one application each of the absorbing lemma and path cover lemma. First, we apply the absorbing lemma to obtain a collection $\paths$ of `absorbing' $\ell$-paths so that almost all sets $S \in \binom{V(H)}{k-\ell}$ are `good' in the sense that there are many paths~$P \in \paths$ which can `absorb' $S$ in $H_p^+$. We then apply the connecting lemma to find a single $\ell$-path $P$ which contains each path in $\paths$ as a path segment. Following this we randomly select a small reservoir set $R$, before applying the path cover lemma to find vertex-disjoint $\ell$-paths of long constant length which cover every vertex of $H$ except for those in $V(P)$ or $R$. We then make a second application of the connecting lemma to find an $\ell$-cycle $C$ in $H^+_p$ which includes $P$ and each of these $\ell$-paths as a path segment. The cycle $C$ then covers every vertex of $H$ except for those in the reservoir $R$ which were not used for the second application of the connecting lemma. Finally, we complete the proof by partitioning these leftover vertices into good $(k-\ell)$-tuples and greedily absorbing these into the `absorbing' paths obtained from the absorbing lemma (which are now path segments of $C$). We note that it is necessary to make two separate applications of the connecting lemma here since our collection $\paths$ of `absorbing paths' is too large to be connected using the reservoir set~$R$ (which in turn cannot be any larger or we would be unable to absorb all the leftover vertices into $C$).

\medskip \noindent {\bf Structure of the paper.}
In Section~\ref{sec:mainproof} of this paper we give formal statements of the three principal lemmas described above, but we defer the proofs of the absorbing and connecting lemmas to subsequent sections. Also in Section~\ref{sec:mainproof} we present the full proof of Theorem~\ref{main} as outlined above, and give a construction which demonstrates the optimality of Theorem~\ref{main}. Following this, in Section~\ref{sec:absorbing} we prove our `absorbing lemma', and in Section~\ref{sec:connecting} we turn to the proof of our `connecting lemma'. Finally, we make some concluding remarks in Section~\ref{sec:conclusion}.

\section{Proof of Theorem~\ref{main} and its optimality} \label{sec:mainproof}

The first subsection of this section includes the statements of the key lemmas described in the proof outline, whilst in the second we use these to prove Theorem~\ref{main}. The final subsection gives examples demonstrating the optimality of Theorem~\ref{main}.

\subsection{Key lemmas}
As described above, our `path cover lemma' is provided by a special case of a seminal theorem of Johanssen, Kahn and Vu~\cite{JKV} regarding the threshold probability for the existence of an $H$-factor in $\hnpk$ (an \emph{$H$-factor} in a $k$-graph $G$ is a spanning collection of vertex-disjoint copies of~$H$ in $G$). For any $k$-graph $H$ define $d(H) := e(H)/(v(H)-1)$, and say that $H$ is \emph{strictly balanced} if $d(H') < d(H)$ for every proper subgraph $H' \subsetneq H$. Johanssen, Kahn and Vu showed that if a $k$-graph $H$ is strictly balanced, then $th_H(n)$ is a probability threshold for the existence of an $H$-factor in $\hnpk$, where
$$th_H(n)=n^{-1/d(H)} \left( \log n\right)^{1/e(H)}.$$
Observe that if $P$ is an $\ell$-path $k$-graph of length $m$ then, since $P$ has $(k-\ell)m + \ell$ vertices, we have
$$d(P) = \frac{m}{(k-\ell)m+\ell -1}=\frac{1}{(k-\ell)+\frac{\ell -1}{m}}.$$
So for $2 \leq \ell < k$ we find that $d(P)$ increases as $m$ increases, and it follows that $P$ is strictly balanced. We therefore have the following theorem (the special case of Johanssen, Kahn and Vu's theorem for $\ell$-path $k$-graphs). 

\begin{theorem} \label{pathtiling}
Fix integers $2 \leq \ell < k$ and $m \geq 1$, and define $b := (k-\ell)m + \ell$. Let $P$ be the $\ell$-path $k$-graph of length~$m$, so $P$ has $b$ vertices. If $b$ divides $n$ and $p = \omega \left(\log n^{1/m} n^{-(k-\ell) -(\ell-1)/m}\right)$ then with high probability $\hnpk$ contains a $P$-factor.
\end{theorem}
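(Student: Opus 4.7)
The plan is to apply the Johanssen--Kahn--Vu theorem on the threshold for $H$-factors directly to the $\ell$-path $P$, after verifying its strict balance. The passage preceding the statement has already computed $e(P) = m$ and $d(P) = 1/((k-\ell) + (\ell-1)/m)$, so
$$th_P(n) = n^{-1/d(P)} (\log n)^{1/e(P)} = n^{-(k-\ell) - (\ell-1)/m} (\log n)^{1/m},$$
which is precisely the quantity dominated by $p$ in the hypothesis. Once strict balance of $P$ is verified, the Johanssen--Kahn--Vu theorem then immediately delivers a $P$-factor in $\hnpk$ with high probability.

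The only remaining step is to check strict balance. Let $P' \subsetneq P$ be a proper subgraph with $m' \le m$ edges and $v'$ vertices. Removing isolated vertices only increases $d(P')$, so I may assume $P'$ has none. The $m'$ edges of $P'$ correspond to indices $0 \le r_1 < \dots < r_{m'} \le m-1$, where the $r$-th edge of $P$ is $e_r = \{v_{r(k-\ell)+1}, \dots, v_{r(k-\ell)+k}\}$. Two edges $e_r, e_{r+s}$ share $\max(0, \ell - (s-1)(k-\ell))$ vertices, a quantity maximized at $\ell$ when $s=1$. Hence $|\bigcup_i e_{r_i}|$ is minimized precisely when the indices $r_i$ form a consecutive block, in which case $P'$ is itself an $\ell$-path of length $m'$ on $(k-\ell)m' + \ell$ vertices, giving
$$d(P') = \frac{m'}{(k-\ell)m' + \ell - 1} = \frac{1}{(k-\ell) + (\ell-1)/m'}.$$
For $\ell \ge 2$ and $m' < m$, this is strictly less than $d(P)$ by the monotonicity already observed in the excerpt. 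Any non-consecutive choice of indices strictly increases $v'$ and hence strictly decreases $d(P')$. The case $m' = m$ would force $v' < b$, which is impossible if $P'$ contains every edge of $P$ and has no isolated vertices. So strict balance holds, and the reduction to Johanssen--Kahn--Vu is complete.

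The main (and rather modest) obstacle is the strict-balance verification, since Johanssen--Kahn--Vu is being cited as a black box. One must note that proper subgraphs of $P$ need not themselves be $\ell$-paths, so one cannot appeal directly to monotonicity of $d$ among $\ell$-paths; the extra combinatorial ingredient is precisely that within $P$ the vertex count of any $m'$-edge subgraph is minimized by choosing a consecutive block of edges.
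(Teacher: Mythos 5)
Your proposal is correct and follows essentially the same route as the paper: cite the Johansson--Kahn--Vu theorem as a black box, compute $d(P)=\frac{1}{(k-\ell)+(\ell-1)/m}$ so that $th_P(n)$ matches the stated bound on $p$, and verify that $P$ is strictly balanced. In fact your verification of strict balance (showing that among subgraphs with $m'<m$ edges the vertex count is minimized by a consecutive block of edges, i.e.\ a sub-$\ell$-path, and then using monotonicity of $d$ in the path length for $\ell\ge 2$) spells out the step the paper dispatches with the single remark that ``$d(P)$ increases as $m$ increases, and it follows that $P$ is strictly balanced.''
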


Our `absorbing lemma' is the next lemma, and is proved in Section~\ref{sec:absorbing}. For this we make the following definition: given a $k$-graph $H$, an $\ell$-path $P$ in $H$ and a $(k-\ell)$-tuple $S \in \binom{V(H)}{k-\ell}$, we say that $P$ can \emph{absorb} $S$ in $H$ if there exists an $\ell$-path $Q$ in $H$ with the same ends as $P$ and vertex set $V(Q) = V(P) \cup S$.

\begin{lemma}\label{Lem:Absorbing_Paths}
Fix integers $2 \leq \ell < k$, define $t := \kFrac - 1$ and fix a constant $c < 1/t$. Suppose that $\xi \ll \eta \ll \alpha, 1/k$, and let
$H$ be a $k$-graph on $n$ vertices with $\delta_{k-\ell}(H) \geq\alpha n^{\ell}$. If $p \geq n^{-(k-\ell)-c}$, then with high probability there exists a collection $\paths$ of at most $\eta n$ vertex-disjoint $\ell$-paths in $H$  and a set $\mathcal{B} \subseteq \binom{V(H)}{k-\ell}$ such that
\begin{enumerate}[label=(\alph*), noitemsep]
\item each path in $\paths$ has at most $3k^2$ vertices,
\item each vertex in $V(H) \sm \bigcup_{P \in \paths} V(P)$ lies in at most $\eta n^{k-\ell-1}$ elements of $\mathcal{B}$, and
\item for each $(k-\ell)$-tuple $S \notin \mathcal{B}$ there are at least $\xi n$ paths $P \in \paths$ which can absorb $S$ in $H^+_p$.
\end{enumerate}
\end{lemma}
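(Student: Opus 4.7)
The plan is to follow the outline sketched just before the statement: construct a bounded-size absorbing gadget $F$, show that almost every $(k-\ell)$-tuple extends to many copies of $F$ in $H_p^+$, and then select a random sub-collection whose path-parts will form $\mathcal{P}$. Concretely, I would design $F$ to consist of an $\ell$-path $P$ on at most $3k^2$ vertices, a distinguished $(k-\ell)$-set $F_A$ disjoint from $V(P)$, and an auxiliary $\ell$-path $Q$ on vertex set $V(P) \cup F_A$ with the same two $\ell$-tuple ends as $P$. The edge set $E(P) \cup E(Q)$ is partitioned into $F_{\reg}$, the edges I intend to realise inside the dense host $H$, and $F_{\rand}$, the edges I reserve for the random part $H_p^+ \setminus H$. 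The gadget must be chosen so that $|F_{\rand}| = t = \kFrac - 1$ and, once $F_A$ is fixed, $F_{\reg}$ can be built up edge-by-edge by $\ell$-tuple extensions that the codegree condition $\delta_{k-\ell}(H) \ge \alpha n^{\ell}$ can directly feed.

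Granted such an $F$, I would call an ordered $(k-\ell)$-tuple $\vec S$ \emph{typical} if a greedy construction of $F_{\reg}$, with $\vec S$ playing the role of $F_A$, has a $(1-o(1))$-fraction of the maximum possible number of valid choices at every step. A standard double-counting argument using $\delta_{k-\ell}(H) \geq \alpha n^{\ell}$ shows that all but $o(n^{k-\ell})$ tuples are typical, and that each typical $\vec S$ extends to at least $\beta\, n^{v(P)}$ copies of $F_{\reg}$ in $H$ for some constant $\beta = \beta(\alpha, k) > 0$.

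For each typical $\vec S$, let $X_{\vec S}$ count those $F_{\reg}$-extensions whose $t$ missing edges additionally appear in $H_p^+$. Then
\[
\E[X_{\vec S}] \;\ge\; \beta\, n^{v(P)} \cdot p^{t} \;\ge\; \beta\, n^{v(P) - t(k-\ell) - tc},
\]
and the choice $t = \kFrac - 1$ together with $c < 1/t$ is arranged to make $v(P) - t(k-\ell) - tc$ a positive constant, so that $\E[X_{\vec S}] \ge n^{1+\delta}$ for some $\delta > 0$. A Janson- or second-moment-type argument, in the spirit of the Johansson--Kahn--Vu machinery, then yields $X_{\vec S} \ge \tfrac{1}{2}\E[X_{\vec S}]$ with high probability for each fixed $\vec S$; the tuples where this fails are collected into a bad set of size $o(n^{k-\ell})$ via Markov and a union bound. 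Now I would independently retain each $F$-copy in $H_p^+$ with an appropriate probability $\rho = \Theta(n^{1-v(F)})$, obtaining by Chernoff a random family of size $\Theta(n)$ in which almost every $\vec S$ is still absorbed by at least $\xi n$ copies. The expected number of intersecting pairs in the retained family is $o(n)$ (because each copy has constant size), so deleting one copy from each bad pair leaves a vertex-disjoint sub-collection $\mathcal{P}$ of at most $\eta n$ paths. Finally, $\mathcal{B}$ is formed by uniting the `typical-bad' tuples and the `selection-bad' tuples; it has size $o(n^{k-\ell})$, and Markov's inequality shows at most $\eta n$ vertices lie in more than $\eta n^{k-\ell-1}$ elements of $\mathcal{B}$, so those few exceptional vertices can be covered by appending a handful of additional short paths to $\mathcal{P}$.

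The main obstacle will be the explicit design of the gadget $F$ and the corresponding edge partition: it must be small, admit a valid absorbing $\ell$-path $Q$ with the same ends as $P$, and distribute the $t$ random edges so that the exponent $v(P) - t(k-\ell) - tc$ in the expectation above is positive whenever $c < 1/t$. Verifying that the second moment of $X_{\vec S}$ does not destroy concentration (which requires checking that none of the `subgadgets' of $F$ has a dangerously denser edge/vertex ratio) is the other delicate point. Once the gadget is in place, the counting, concentration, and random-selection steps follow the standard absorbing-method blueprint.
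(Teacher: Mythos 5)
The main gap is the rooted-extension step inside the dense graph $H$. You assert that ``a standard double-counting argument using $\delta_{k-\ell}(H)\ge\alpha n^{\ell}$'' shows almost every ordered $(k-\ell)$-tuple extends to $\Omega(n^{v(P)})$ copies of $F_{\reg}$, and that $F_{\reg}$ ``can be built up edge-by-edge by $\ell$-tuple extensions that the codegree condition can directly feed''. But the hypothesis of the lemma bounds degrees of $(k-\ell)$-sets, not of $\ell$-sets: when $\ell\neq k-\ell$ (for instance $k=3$, $\ell=2$, where the assumption is only a vertex-degree condition) an $\ell$-set may lie in no edge at all, so you cannot append the edges of an $\ell$-path greedily with the prescribed roots; and averaging over the $\Omega(n^{v(F)})$ unrooted copies given by supersaturation yields only an ``on average'' statement, not ``for almost every prescribed tuple''. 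This is exactly where the paper invests its main structural work: $F_{\reg}$ is designed to be $k$-partite with no edge containing more than one vertex of $F_A$, and the extensibility statement (Lemma~\ref{extension}) is proved via the strong hypergraph regularity lemma plus an extension lemma applied inside a single edge of the reduced $k$-graph, the codegree hypothesis entering only through degree inheritance at the cluster level. Some tool of this strength (or a genuinely new elementary argument) is required; as written, this step of your proposal is unsupported.

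Two further points in the probabilistic part. Your concentration step is exactly the ``delicate point'' you defer: the paper avoids Janson/second moments by exposing $t$ independent rounds $H_1,\dots,H_t$ and requiring the $i$th random edge of the gadget to land in $H_i$; each round is then a weighted sum of independent indicators with weights at most $k!n^{L-k}$, so Proposition~\ref{chernoffbound} gives failure probability $\exp(-\Omega(q^tn^k))=\exp(-\Omega(n^{1-tc}))$ -- this is where $c<1/t$ really enters (not through positivity of $v(P)-t(k-\ell)-tc$, which holds with huge room) -- and it is small enough to union bound over all tuples, so no Markov-selected bad set of uncontrolled structure is needed for (c). Second, ``the expected number of intersecting pairs in the retained family is $o(n)$ because each copy has constant size'' is not automatic: you need a with-high-probability upper bound on the number of completed copies through any fixed vertex, which is the second and more delicate half of the paper's Lemma~\ref{numberofrandompaths} (including a separate argument for positions lying in $e_1\cap e_t$); also the retention probability should be $\Theta\bigl(n/(q^tn^{v(F)})\bigr)$ rather than $\Theta(n^{1-v(F)})$. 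The remaining elements of your outline -- the gadget with two paths sharing ends, the random sparsification, and covering the exceptional vertices of $\mathcal{B}$ by additional single-edge paths to secure (b) -- do match the paper's argument.
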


Our `connecting lemma' is the following lemma, allowing us to connect a collection of paths to form a single cycle. We prove this lemma in Section~\ref{sec:connecting}.

\begin{lemma}\label{Lemma:ConnectingPaths}
Fix integers $2 \leq \ell < k$, define $t := \kFrac - 1$ and fix a constant $c < 1/t$. Suppose that $\vartheta \ll \alpha, 1/k$, let $H$ be a $k$-graph on~$n$ vertices and let $\paths$ be a collection of at most $\vartheta n$ vertex-disjoint $\ell$-paths in $H$. Suppose that, writing $X := V(H) \sm \bigcup_{P \in \paths} V(P)$, for every set $S \in \binom{V(H)}{\ell}$ we have $\deg_H(S, X) \geq \alpha n^{k-\ell}$. If $p \geq n^{-(k-\ell)-c}$ then with high probability there exists an $\ell$-cycle~$C$ in $H_p^+$ with $|V(C) \cap X| \leq 4k |\paths|$ such that $C$ contains each $P \in \paths$ as a path segment.
\end{lemma}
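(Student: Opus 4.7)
The plan is to process the paths in $\paths$ in an arbitrary cyclic order $P_1, \ldots, P_N$ (with $N := |\paths| \leq \vartheta n$ and indices mod $N$), building a single bridge $B_i$ from $P_i^{\pend}$ to $P_{i+1}^{\pbeg}$ at a time. Each $B_i$ is an $\ell$-path in $H_p^+$ of length exactly $3t$, obtained as the concatenation of three length-$t$ subpaths: a deterministic extension of $P_i$ into $X$ using only edges of $H$, a random segment using $t$ edges of $H_p^+$, and a deterministic extension of $P_{i+1}$ (attached at its beginning and traversed in reverse) using only edges of $H$. Concatenating the $P_i$ with the $B_i$ in the natural cyclic order then yields the desired $\ell$-cycle $C$, and since each $B_i$ introduces at most $3t(k-\ell) - \ell \leq 3k$ new vertices of $X$, we get $|V(C) \cap X| \leq 3kN \leq 4k|\paths|$. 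The specific value $t = \kFrac - 1$ is precisely what makes $t(k-\ell) \geq \ell$, so that the random segment admits disjoint end $\ell$-tuples, ensuring that the $t$ random edges avoid the original vertices of $P_i$ and $P_{i+1}$.

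The first technical step is a counting lemma inside $H$ alone. For any $\ell$-tuple $S$ and any "forbidden" set $U \subseteq V(H)$ with $|U| \leq \alpha n/2$, an inductive extension one edge at a time using the hypothesis $\deg_H(S', X) \geq \alpha n^{k-\ell}$ (applied to each intermediate $\ell$-tuple $S'$, absorbing $|U|$ into the constant) shows that $S$ admits at least $\Omega(n^{t(k-\ell)})$ length-$t$ $\ell$-path extensions into $X \sm U$. Combined with $\Omega(n^{t(k-\ell)-\ell})$ choices for the internal vertices of the random segment (those not determined by the two extensions' endpoints), this yields $\Omega(n^{3t(k-\ell)-\ell})$ \emph{potential} bridges from $P_i$ to $P_{i+1}$. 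Each becomes a \emph{valid} bridge provided $t$ specific $k$-tuples all appear in $H_p^+$, an event of probability at least $p^t \geq n^{-t(k-\ell)-tc}$; hence the expected number of valid bridges per pair is $\Omega(n^{2t(k-\ell) - \ell - tc})$, and since $t(k-\ell) \geq \ell \geq 2$ and $tc < 1$ this exponent is at least $\ell - tc > 1$.

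The second step is to upgrade this expectation to a high-probability statement uniform in $(P_i, P_{i+1}, U)$. I would apply Janson's inequality (or equivalently a direct second-moment computation), classifying pairs of potential bridges by the number $s \in \{1,\ldots,t\}$ of random edges they share in their random segments. The resulting tail bound $\Pr[\text{no valid bridge}] \leq \exp(-n^{\delta})$ for some $\delta > 0$ is then union-bounded over all $O(N^2)$ path-pairs and over the polynomially many "state" sets $U$ that could arise during the greedy construction. With this uniform existence in hand, I greedily build $B_1, \ldots, B_N$ in order, at each step choosing any valid bridge for $(P_i, P_{i+1})$ that avoids the vertices used so far; the cumulative used set stays below $3kN \ll \alpha n/2$ throughout, so the greedy step always succeeds.

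The main obstacle is the concentration step, and specifically the bound on Janson's $\Delta$ when $c$ is close to the threshold $1/t$. In that regime the expected count $\mu$ of valid bridges is only polynomially above $1$, and the dominant covariance contribution --- from pairs of potential bridges sharing exactly one random edge --- must be carefully shown to be $o(\mu^2)$. This requires a combinatorial case analysis of how two bridges can overlap in both their interior vertices and their constituent $k$-tuples of random edges, which is the technical heart of the argument.
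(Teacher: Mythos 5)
Your overall architecture (deterministic length-$t$ extensions in $H$ from each end, a middle segment of $t$ random edges, greedy construction protected by a union bound over forbidden sets) is reasonable, but as written there is a genuine gap at the concentration/union-bound step, and it is exactly the point the paper's proof is engineered to avoid. The "state" sets $U$ that can arise during your greedy construction are \emph{not} polynomially many: $U$ is the union of the vertices of previously built bridges, which depend on the random edges, so to decouple you must union bound over \emph{all} subsets of $V(H)$ of size up to $O(k\vartheta n)$, of which there are $\exp(\Theta(\vartheta n\log n))$. Consequently a tail bound of the form $\exp(-n^{\delta})$ "for some $\delta>0$" is not sufficient; you need $\delta>1$. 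Your plan defers precisely this computation (the Janson $\Delta$ analysis) as "the technical heart", so the proposal does not yet close. For the record, the computation does appear to come out in your favour: classifying pairs of potential bridges by the number $j\in[t]$ of shared random edges gives $\mu^2/\Delta_j=\Omega(n^{\ell-jc})$ and $\mu=\Omega(n^{2t(k-\ell)-\ell-tc})$, so generalised Janson yields failure probability $\exp(-\Omega(n^{\ell-tc}))$ with $\ell-tc>1$ precisely because $\ell\geq 2$ and $tc<1$ --- but this quantitative claim, together with the correct (exponential) count of sets $U$, must be stated and proved; without it the argument fails for $c$ near $1/t$.

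It is also worth contrasting with the paper's route, which avoids Janson altogether and does not connect the paths in a prescribed cyclic order. While more than a constant number of paths remain, a pigeonhole/inclusion--exclusion argument (Claim~\ref{claim3}) finds \emph{two} path-ends whose $H$-extensions meet in a popular $\ell$-tuple, so the connecting structure $F_1$ already contains one of the $t$ "random" edges inside $H$; only $t-1$ random edges are then needed, and a sequential exposure with the Chernoff-type Proposition~\ref{chernoffbound} gives failure probability $\exp(-\Omega(n^{2-tc}))$, which beats the union bound over linear-size forbidden sets since $2-tc>1$. The last constantly many connections are handled separately (Lemma~\ref{connectfewpaths}), where forbidden sets have constant size and the weaker bound $\exp(-\Omega(n^{1-tc}))$ suffices. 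Your approach, if the second-moment analysis is carried out and shown to give exponent exceeding $1$, would buy a simpler global structure (a fixed cyclic order, no pairing trick); the paper's approach buys elementary concentration tools at the cost of the pigeonhole pairing and a two-phase connection scheme. As submitted, however, the missing $\Delta$ estimate and the incorrect count of state sets constitute a real gap rather than a routine omission.
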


Sometimes we will apply Lemma~\ref{Lemma:ConnectingPaths} to obtain an $\ell$-path which contains each $P \in \paths$ as a path segment (rather than an $\ell$-cycle with this property). This can be achieved by simply deleting edges from the cycle given by Lemma~\ref{Lemma:ConnectingPaths}, so we will do so without further comment.

Finally, we also use the following theorem of Daykin and H\"aggkvist \cite{DH}. This states that every $k$-graph with sufficiently high minimum vertex degree admits a perfect matching, i.e. a spanning collection of disjoint edges.

\begin{theorem}\label{Thm:MinDegreeMatching} If $k \geq 2$ and $k$ divides $n$, then every $k$-graph $H$ of order $n$ with $\delta_1(H) > \frac{k-1}{k}\left(\binom{n-1}{k-1}-1\right)$ contains a perfect matching.
\end{theorem}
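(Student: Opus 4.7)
My plan is a proof by contradiction via a maximum matching argument. Assume $H$ has no perfect matching and let $M$ be a maximum matching; write $m := |M|$ and $U := V(H) \sm V(M)$, so $m < n/k$, and since $k \mid n$ and $k \mid |V(M)|$ we have $|U| \geq k$. By the maximality of $M$, no edge of $H$ is contained in $U$, so every edge through a vertex of $U$ must meet $V(M)$.

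The degree hypothesis rewrites as an upper bound on non-edges at each vertex: for every $v \in V(H)$, the number of $(k-1)$-subsets $S \subseteq V(H) \sm \{v\}$ with $\{v\} \cup S \notin E(H)$ is strictly less than $\tfrac{1}{k}\binom{n-1}{k-1} + \tfrac{k-1}{k}$. Consequently, for any $u \in U$, given any collection of more than this many $(k-1)$-subsets of $V(H) \sm \{u\}$, at least one must extend $u$ to an actual edge of $H$. This is the only analytic input beyond the combinatorics of $M$.

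The heart of the proof is to exhibit an \emph{augmenting configuration}: edges $e_{i_1},\ldots,e_{i_t} \in M$ together with pairwise disjoint edges $f_1,\ldots,f_{t+1} \in E(H)$ satisfying $\bigcup_{j} f_j \subseteq U \cup \bigcup_{j} e_{i_j}$. Replacing the $e_{i_j}$'s by the $f_j$'s inside $M$ then produces a matching of size $m+1$, contradicting maximality. The minimal case $t = 1$ asks for a single $e \in M$ and two disjoint edges $f_1, f_2 \in E(H)$ with $(f_1 \cup f_2) \cap V(M) \subseteq e$; this forces $|(f_1 \cup f_2) \cap U| = k$, and the resulting enlargement of $M$ is automatically a valid matching.

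The main obstacle, and the essence of the Daykin--H\"aggkvist argument, is showing that some augmenting configuration is forced to exist. The natural approach is a double count: fix $e \in M$, range over ordered pairs $(u_1, u_2)$ of distinct vertices of $U$, and count ways to extend $u_1$ and $u_2$ to disjoint edges $f_1, f_2$ of $H$ using only $e \cup U$. The non-edge bound above forces this count (summed over $e$ and the pairs) to be large, while the absence of any augmenting configuration forces the count to be zero once edges forbidden by overlaps are removed. Balancing these two estimates is the delicate step: the threshold $\tfrac{k-1}{k}\bigl(\binom{n-1}{k-1}-1\bigr)$ is precisely the point at which the non-edge slack at vertices of $U$ is exhausted by the $(k-1)$-subsets entirely within $U$, and any strict improvement on $\delta_1(H)$ forces an augmentation, yielding the required contradiction.
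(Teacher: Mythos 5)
There is a genuine gap at exactly the point you label ``the delicate step'': the augmenting configuration is never actually produced, and the double count you sketch cannot force it. Note first that the paper does not prove this statement at all --- it is quoted from Daykin and H\"aggkvist \cite{DH} --- so a complete self-contained argument would have to carry the counting through, which your proposal does not. Worse, the specific $t=1$ count you describe fails quantitatively in the main case. After removing the vertices of a maximum matching $M$ we only know $|U|\geq k$, and the hard situation is $|U|=k$ (or any $|U|$ of small order). For a fixed $e\in M$ and fixed $u_1,u_2\in U$, the candidate sets $f_1\sm\{u_1\}$, $f_2\sm\{u_2\}$ must lie inside $(e\cup U)\sm\{u_1,u_2\}$, a set of only $|U|+k-2$ vertices, so there are $O(1)$ candidates per matching edge and $O(n)$ candidates in total. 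The degree hypothesis, rewritten as you do, allows each vertex up to roughly $\frac{1}{k}\binom{n-1}{k-1}=\Theta(n^{k-1})$ non-edges, so for $k\geq 3$ the absence of \emph{every} candidate $f_1,f_2$ is perfectly consistent with the minimum degree condition; nothing is ``forced to be large'', and no contradiction arises. This is precisely why the actual Daykin--H\"aggkvist argument is not a one-edge swap confined to $e\cup U$: the augmentation has to be sought more globally (edges through the uncovered vertices meeting several edges of $M$, with a correspondingly more involved count), and that is the content you would need to supply.

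Two smaller points: your claim that the $t=1$ configuration ``forces $|(f_1\cup f_2)\cap U|=k$'' should be $\geq k$ (equality only when $f_1\cup f_2\supseteq e$), though the augmentation is valid either way; and the closing assertion that the threshold $\frac{k-1}{k}\bigl(\binom{n-1}{k-1}-1\bigr)$ is ``precisely the point at which the non-edge slack at vertices of $U$ is exhausted by the $(k-1)$-subsets entirely within $U$'' cannot be right as stated, since the number of $(k-1)$-subsets of $U$ depends on $|U|$, which is not determined by the hypothesis. As it stands the proposal is a plan in the right spirit (maximum matching plus augmentation) but the theorem's proof is exactly the step that is missing.
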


\subsection{Proof of Theorem~\ref{main}}

We now combine these lemmas to prove Theorem~\ref{main}.

\begin{proof}[Proof of Theorem~\ref{main}]
Fix $2 \leq \ell < k$ and write $\ell' := \max(\ell, k-\ell)$. Given $\alpha > 0$, introduce new constants satisfying 
$$1/m \ll \xi \ll \eta \ll \alpha, 1/k,$$
and define $\alpha' := \alpha/k!$ and $b := m(k-\ell) + \ell$.
Let $H$ be a $k$-graph on~$n$ vertices, where $k-\ell$ divides $n$, and suppose that $\delta_{\ell'}(H) \geq \alpha n^{k-\ell'}$, from which it follows that $\delta_{\ell}(H) \geq \alpha' n^{k-\ell}$ and $\delta_{k-\ell}(H) \geq \alpha' n^{\ell}$. Finally, fix $c < (\ell-1)/m$ and $p \geq n^{-(k-\ell)-c}$. We proceed by a multiple exposure argument with four rounds. For this, let $H_1$, $H_2$, $H_3$ and $H_4$ be independently drawn from $H^{(k)}_{n, p/4}$. Then by a standard coupling argument we may assume that $H \cup \bigcup_{i \in [4]} H_i \subseteq H_p^+$.

We begin by using our first exposure round to apply our absorbing lemma, Lemma~\ref{Lem:Absorbing_Paths}. This states that with high probability there exists a collection $\paths$ of at most $\eta n$ vertex-disjoint $\ell$-paths in $H \cup H_1$ and a set $\mathcal{B} \subseteq \binom{V(H)}{k-\ell}$ of $(k-\ell)$-tuples such that, writing $V(\paths) := \bigcup_{P \in \paths} V(P)$,
\begin{enumerate}[label=(\alph*), noitemsep]
\item each path in $\paths$ has at most $3k^2$ vertices,
\item each vertex in $V(H) \sm V(\paths)$ lies in at most $\eta n^{k-\ell-1}$ elements of $\mathcal{B}$, and
\item for each $(k-\ell)$-tuple $S \notin \mathcal{B}$ there are at least $\xi n$ paths $P \in \paths$ which can absorb $S$ in $H \cup H_1$.
\end{enumerate}

Let $X := V \sm V(\paths)$. Note that for every set $S \in \binom{V(H)}{\ell}$ there are $\deg_H(S) \geq \delta_{\ell}(H) \geq \alpha' n^{k-\ell}$-many $(k-\ell)$-tuples $S'$ for which $S \cup S'$ is an edge of $H$. Since $|\bigcup_{P \in \paths} V(P)| \leq 3k^2\eta n \leq \alpha' n/2$, at most $\alpha' n^{k-\ell}/2$ such sets $S'$ intersect $V(\paths)$, and so we have $\deg_H(S, X) \geq \alpha' n^{k-\ell}/2$. We now make our second exposure round to apply our connecting lemma, Lemma~\ref{Lemma:ConnectingPaths}, with $\eta$ and $\alpha'/2$ in place of $\vartheta$ and $\alpha$ respectively. With high probability this yields a single $\ell$-path~$P$ in $H \cup H_1 \cup H_2$ such that $P$ contains each path in $\paths$ as a path segment and $|V(P)| \leq |V(\paths)| + 4k|\paths| \leq 3k^2 \eta n + 4k\eta n \leq \alpha' n/2$. 

Define $X':= V \sm V(P)$. By exactly the same argument as above it follows that every set $S \in \binom{V(H)}{\ell}$ has $\deg_H(S, X') \geq \alpha' n^{k-\ell}/2$. 
Choose $r$ with $\xi n \leq r \leq 2\xi n$ such that $b$ divides~$|X'| - r$, and choose a `reservoir' set $R$ of size $r$ uniformly at random from all subsets of $X'$ of this size. Then for each set $S \in \binom{V(H)}{\ell}$, a standard Chernoff-type bound yields that
\begin{equation*}
\Pr\left(\deg_H(S, R)\leq \frac{\alpha' r^{k-\ell}}{4}\right)\leq \exp({-\Omega (n^{k-\ell})}).
\end{equation*}
Let $\mathcal{B}[R] \subseteq \mathcal{B}$ consist of all members of $\mathcal{B}$ which are subsets of $R$. Then using (b) we find that for every vertex $v \in V(H) \sm V(\paths)$ the expected number of sets in $\mathcal{B}[R]$ containing $v$ is at most~$2\eta r^{k-\ell-1}$. So by a standard Chernoff-type bound, for each $v \in V(H) \sm V(\paths)$ the probability that $v$ is contained in more than $3 \eta r^{k-\ell-1}$ members of $\mathcal{B}[R]$ is at most $\exp({-\Omega (n^{k-\ell})})$. So, taking a union bound over all such sets $S \in \binom{V(H)}{\ell}$ and vertices $v \in V(H) \sm V(\paths)$ we find that with high probability every set $S \in \binom{V(H)}{\ell}$ satisfies
\begin{equation}\label{eqn:minDegreeinR}\deg_H(S, R)> \frac{\alpha' r^{k-\ell}}{4} \geq \frac{\xi^{k-\ell}\alpha'}{4}n^{k-\ell}. 
\end{equation}
and 
\begin{equation}\label{vdeginR} \mbox{every vertex $v \in V(H) \sm V(\paths)$ is contained in at most $3 \eta r^{k-\ell-1}$ members of $\mathcal{B}[R]$.}
\end{equation}

We now make make our third exposure round to apply our path cover lemma, Theorem~\ref{pathtiling}. Note for this that $b$ divides $|X' \sm R|$ by our choice of $r$. So with high probability we obtain a spanning collection $\paths'$ of vertex-disjoint $\ell$-paths of length $m$ in $H_3[X' \sm R]$. Note in particular that $|\paths'| < n/b$. Let $\paths'' := \paths \cup \{P\}$, so $\paths''$ is a collection of at most $1+n/b \leq n/m$ vertex-disjoint $\ell$-paths in $H \cup H_1 \cup H_2 \cup H_3$, none of which intersect $R$.

Our fourth and final exposure round is to apply our connecting lemma, Lemma~\ref{Lemma:ConnectingPaths}, which we do with~$\xi^{k-\ell} \alpha'/4$ and $1/m$ here playing the roles of $\alpha$ and $\vartheta$ respectively there (so~\eqref{eqn:minDegreeinR} ensures that the degree condition of the theorem is satisfied). With high probability this yields an $\ell$-cycle~$C$ in $H_p^+$ with $|V(C) \cap R| \leq 4kn/m$ which contains each path in $\paths''$ as a path segment (so in particular~$C$ contains each path in $\paths$ as a path segment).

Finally we use the absorbing paths within $C$ to absorb the remaining vertices. Let $R' := R \sm V(C)$, so $R'$ consists of all vertices of $R$ except those used to connect paths in the fourth exposure round. Since $C$ contains every vertex outside $R$ we also have $R' = V(H) \sm V(C)$. By assumption $k-\ell$ divides $n$ and since $C$ is an $\ell$-cycle $k-\ell$ divides $|V(C)|$. It follows that $k-\ell$ divides $n-|V(C)| = |R'|$. Define an auxiliary $(k-\ell)$-graph $G$ with vertex set~$R'$ and edge set $\binom{R'}{k-\ell} \sm \mathcal{B}$. Then for every $v \in R'$, since $v \notin V(\paths)$, by~\eqref{vdeginR} and the fact that $|R'| \geq |R| - |V(C) \cap R| \geq r - 4kn/m \geq (1-\eta)r$ we have
$$\deg_G(v) \geq \binom{|R'|}{k-\ell-1} - 3\eta r^{k-\ell-1} > \frac{k-\ell-1}{k}\binom{|R'|}{k-\ell-1}$$
We may therefore apply Theorem~\ref{Thm:MinDegreeMatching} to find a perfect matching $M$ in $G$. 
We then have $|M| = |R'|/k \leq r/k \leq \xi n$, so by (c) we can greedily assign each $(k-\ell)$-tuple $e \in M$ to a distinct path in $\paths$ which can absorb it in $H \cup H_1$. Since each of these paths is a path segment of $C$, by absorbing each $e \in M$ in this way we obtain a Hamilton $(k-\ell)$-cycle in $H_p^+$.
\end{proof}

\subsection{Optimality of Theorem~\ref{main}}

We now demonstrate the optimality of Theorem~\ref{main} in terms of the minimum degree condition and the bound on $p$. To do this we use the following simple application of the first moment method, which gives gives conditions under which $H_p^+$ does not contain a tiling of $\ell$-paths of length $m$ which covers at least half of the vertices of $H$.

\begin{lemma} \label{firstmoment}
Fix integers $2 \leq \ell < k$ and $m$ and a constant $c > \ell/m$. Let $P$ be the $\ell$-path $k$-graph of length $m$, so $P$ has $b:= m(k-\ell)+\ell$ vertices. For $p < n^{-(k-\ell)-c}$, with high probability there does not exist a set of at least $n/2b$ vertex-disjoint $\ell$-paths each of length $m$ in $\hnpk$.
\end{lemma}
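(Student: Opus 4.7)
The plan is a straightforward first moment argument. I set $s := \lceil n/(2b) \rceil$ and let $X$ denote the number of (unordered) collections of $s$ pairwise vertex-disjoint $\ell$-paths of length~$m$ appearing as subgraphs of $\hnpk$. A set of at least $n/(2b)$ such paths exists in $\hnpk$ if and only if $X \geq 1$ (if more than $s$ such paths exist, one can simply discard the surplus), so it suffices to show $\E[X] = o(1)$ and invoke Markov's inequality.

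To bound $\E[X]$, I will first count the number $N$ of potential collections on $[n]$. Every such collection arises from some ordered sequence of $bs$ distinct vertices of $[n]$ interpreted as $s$ concatenated vertex sequences of length~$b$, and each collection is produced at least $s!$ times under this correspondence (from permuting the ordering of the $s$ paths), so $N \leq n^{bs}/s!$. Each fixed collection consists of exactly $ms$ edges and therefore appears in $\hnpk$ with probability $p^{ms}$, giving
\[
\E[X] \leq \frac{n^{bs}\, p^{ms}}{s!} \leq \left(\frac{e\, n^b p^m}{s}\right)^{s} \leq \left(\frac{2 e b \cdot n^b p^m}{n}\right)^{s},
\]
using $s! \geq (s/e)^s$ and $s \geq n/(2b)$.

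The key identity $b - m(k-\ell) = \ell$, combined with $p < n^{-(k-\ell)-c}$, gives $n^b p^m < n^{\ell - mc}$, so the base in the final expression is at most $2eb \cdot n^{\ell - mc - 1}$. The hypothesis $c > \ell/m$ forces $\ell - mc < 0$, hence this base tends to $0$ as $n \to \infty$ and is eventually less than $1$; since $s \to \infty$, it follows that $\E[X] \to 0$, which completes the argument via Markov's inequality.

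I do not foresee a genuine obstacle: the whole proof is essentially a routine first moment estimate. The one point that warrants care is the counting in the bound on $N$, where one must check that the bound is genuinely an upper bound (we intentionally ignore savings from the fact that each $\ell$-path has multiple vertex sequences, such as reversals, since this only strengthens the inequality), and that the exponent arithmetic invokes the inequality $c > \ell/m$ rather than the weaker $c > (\ell-1)/m$ used for balancedness earlier in the paper.
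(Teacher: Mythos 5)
Your proof is correct and follows essentially the same route as the paper: a first-moment count of systems of $s \approx n/2b$ vertex-disjoint copies of $P$, showing $\E[X]\to 0$ and applying Markov. The only difference is bookkeeping --- you count unordered collections and divide by $s!$ (and handle divisibility via the ceiling), whereas the paper counts path-inducing injections of a fixed disjoint union of $n/2b$ copies of $P$ assuming $2b \mid n$; if anything your version is slightly sharper, since the extra $s!$ factor means the computation would already close with $c > (\ell-1)/m$, while the paper's labelled count genuinely uses $c > \ell/m$.
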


\begin{proof}
For simplicity we assume that $2b$ divides $n$; assuming otherwise makes very little difference to the calculations but is notationally more awkward.
Let $\paths$ be the $k$-graph formed by the disjoint union of $n/2b$ copies of $P$, and say that an injective function $f : V(\paths) \to V(\hnpk)$ is \emph{path-inducing} if $f(e) \in \hnpk$ for every $e \in \paths$. Let $X$ be the random variable which counts the number of path-inducing injective functions $f : V(\paths) \to V(\hnpk)$, and note that if there exists a set of $n/2b$ vertex-disjoint $\ell$-paths in $\hnpk$ then $X \geq 1$. Then since $\paths$ has $mn/2b$ edges and~$n/2$ vertices we have 
$$\E(X) = \binom{n}{n/2} (n/2)!p^{mn/2b} \leq 2^n \sqrt{2\pi n}\left(\frac{n}{2e}\right)^{n/2} (p^{m/b})^{n/2} < \sqrt{2\pi n} \cdot \left(n^{1-(k-\ell+c)m/b}\right)^{n/2}$$
By our choice of $c$ we have 
$(k-\ell+c)m/b = \frac{k-\ell+c}{k-\ell + \ell/m} > 1$, and it follows that $\E(X) = o(n)$, so with high probability we have $X= 0$ as required.
\end{proof}

\begin{lemma} \label{optimal}
Fix integers $2 \leq \ell < k$. For every $0 < \alpha < \frac{1}{12k^2}$ there exists a $k$-graph $H$ on $n$ vertices with $\delta_\ell (H)\geq \alpha \binom{n}{k-\ell}$ such that for every $c > \ell/\lfloor\frac{1}{4\alpha k}\rfloor$, if $p < n^{-(k-\ell) - c}$ then with high probability $H_p^+$ does not contain a Hamilton $\ell$-cycle.
\end{lemma}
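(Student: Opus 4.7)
The plan is to take a small set $A \subseteq V$ with $|A| = \lceil 2\alpha n/(k-\ell)\rceil$, and let $H$ consist of all $k$-subsets of $V$ that meet $A$. A routine binomial estimate on $\binom{n-\ell}{k-\ell} - \binom{n-|A|-\ell}{k-\ell}$ (the minimum $\ell$-degree, attained at $\ell$-sets disjoint from $A$) then gives $\delta_\ell(H) \geq \alpha\binom{n}{k-\ell}$ for $n$ sufficiently large. Write $B := V \setminus A$, $m := \lfloor 1/(4\alpha k)\rfloor$ and $b := m(k-\ell)+\ell$; note that the hypothesis gives $c > \ell/m$, which matches the requirement of Lemma~\ref{firstmoment}.

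The strategy is to show that any Hamilton $\ell$-cycle $C$ in $H_p^+$ must contain at least $n/(2b)$ vertex-disjoint $\ell$-path subsegments of length $m$ whose edges all lie inside $B$. Since every such edge is a non-edge of $H$, it appears only through the random part $H_p^+ \setminus H$. Coupling $H_p^+ \setminus H$ as a subgraph of $\hnpk$, any disjoint family of such subsegments also appears in $\hnpk$; hence Lemma~\ref{firstmoment} rules out $C$ with high probability.

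The count follows from three observations about $C$. (i) Each $v \in A$ lies in at most $\lceil k/(k-\ell)\rceil \leq k$ edges of $C$, so at most $k|A|$ edges of $C$ meet $A$, giving $E_B \geq n/(k-\ell) - k|A|$ edges of $C$ entirely in $B$. (ii) Since all cycle edges containing a given vertex appear consecutively in the cyclic edge ordering, each vertex of $A$ lies in a unique maximal run of $A$-edges, so there are at most $|A|$ maximal $B$-runs in $C$. (iii) Within a $B$-run of length $L$, two length-$m$ sub-$\ell$-paths starting at edges $r_1+1 < r_2+1$ are vertex-disjoint iff $r_2 - r_1 \geq \Delta := m + \lceil \ell/(k-\ell)\rceil$ (a short check from the cyclic vertex order), so such a run contributes at least $(L-m)/\Delta$ disjoint subpaths when $L \geq m$.

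Summing over $B$-runs and applying (i)--(ii) yields a total count of at least $(E_B - m|A|)/\Delta$. Substituting $|A| \leq 2\alpha n/(k-\ell) + 1$ and using $\alpha < 1/(12k^2)$ (which gives $\alpha m \leq 1/(4k)$ and $m \geq 3k$, whence $\Delta \leq m + k \leq 4m/3$), one verifies that this count exceeds $n/(2m(k-\ell)) \geq n/(2b)$, contradicting Lemma~\ref{firstmoment}. The hard part is precisely this constant-bookkeeping step: the three losses (edges of $C$ captured by $A$, wastage from short $B$-runs, and the vertex-gap between disjoint subpaths) must all be simultaneously controlled, and the precise constants $1/(12k^2)$ and $\lfloor 1/(4\alpha k)\rfloor$ in the lemma statement are chosen exactly so that this calculation succeeds.
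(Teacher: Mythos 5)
Your construction and overall strategy are exactly those of the paper: the same extremal example (all $k$-sets meeting a small set $A$), the same choice $m=\lfloor 1/(4\alpha k)\rfloor$, and the same reduction to the first-moment statement (Lemma~\ref{firstmoment}) via the observation that edges inside $B$ can only come from the random perturbation. The degree estimate and the final bookkeeping with $\alpha<\tfrac{1}{12k^2}$, $\alpha m\le \tfrac{1}{4k}$, $m\ge 3k$ do check out as you state them. The only real difference is how you extract $n/(2b)$ disjoint length-$m$ subpaths inside $B$: the paper simply takes subpaths of $C$ of length $m$ started at every $(k-\ell)(m+k)$-th vertex (so they are automatically pairwise disjoint) and discards the at most $|A|$ of them that meet $A$; you instead count edges of $C$ inside $B$ and work run by run.

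There is, however, a gap in your counting step: you only verify vertex-disjointness of the chosen subpaths \emph{within} a single $B$-run, but subpaths lying in two different $B$-runs need not be disjoint. A vertex of an $A$-edge need not belong to $A$, and since the edges of $C$ containing a fixed vertex form $\lceil k/(k-\ell)\rceil$ consecutive edges, when $\ell>k/2$ a vertex of $B$ can lie in the last edge of one $B$-run and the first edge of the next (with a short $A$-run in between); e.g.\ for $\ell=k-1$ edges two apart in $C$ always share $k-2$ vertices. So the family you hand to Lemma~\ref{firstmoment}, which requires pairwise vertex-disjoint paths, is not justified as stated. The gap is repairable within your slack: such a shared vertex can only appear in the first $\lceil k/(k-\ell)\rceil-2\le k$ edges of the later run, so trimming the first $k$ edges of every $B$-run restores cross-run disjointness at a cost of at most $k\cdot(\#\text{runs})/\Delta\le k|A|/\Delta=O\bigl(n/(m^{2}(k-\ell))\bigr)$ subpaths, after which your constant bookkeeping (now with factor $1-2\alpha(2k+m)\ge 13/18$) still yields more than $n/(2b)$ paths. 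Alternatively, the paper's equally-spaced selection avoids the issue entirely and is simpler.
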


\begin{proof}
Given $n$ and $\alpha$ we define a $k$-graph $ H := H(n, \alpha)$ as follows. First let $A$ and $B$ be disjoint sets with $|A| = \alpha n$ and $B = n-|A|$. Then take $V := A \cup B$ to be the vertex set of~$H$, so $v(H) = n$, and take every $e \in \binom{V}{k}$ with $V \cap A \neq \emptyset$ to be an edge of $H$. It follows that $\delta_{k-1}(H) \geq \alpha n$ so certainly we have $\delta_{\ell}(H) \geq \alpha\binom{n}{k-\ell}$ for each $\ell \in [k-1]$.

Suppose that $H_p^+$ contains a Hamilton $\ell$-cycle $C$, so $C$ has $n$ vertices and $n/(k-\ell)$ edges. Label the vertices of $C$ with $[n]$ in the natural order. Now fix $m := \lfloor \frac{1}{4\alpha k}\rfloor$ and note that by our assumption on $\alpha$ we then have $3k \leq m \leq \frac{1}{4\alpha k}$. Write $b:= m(k-\ell)+\ell$ and $r := \lfloor n/(k-\ell)(m+k)\rfloor$. For each $i \in [r]$ let $P_i$ be the subpath of $C$ of length $m$ starting at vertex $(i-1)(k-\ell)(m+k)+1$. Since each $P_i$ has $(k-\ell)m+\ell < (k-\ell)(m+k)$ vertices the paths $P_1, \dots, P_r$ are vertex-disjoint subpaths of~$C$. At most $|A| \leq \alpha n$ of the paths $P_i$ contain a vertex of $A$, so removing these we obtain a collection $\paths$ of vertex-disjoint $\ell$-paths of length~$m$ in $H_p^+[B]$ of size
$$ |\paths| \geq r - \alpha n \geq \frac{n}{(k-\ell)(m+k)} - 1 - \alpha n \geq \left(\frac{3}{4} - km\alpha\right) \frac{n}{(k-\ell)m} \geq  \frac{1}{2} \cdot \frac{n}{m(k-\ell)} \geq \frac{n}{2b},$$
where the third and fourth inequalities hold by our assumptions on the size of $m$.
However, since~$H[B]$ is empty it follows that we have a collection of $n/2b$ vertex-disjoint $\ell$-paths of length~$m$ in $\hnpk$. For $c > \ell/m$ and $p < n^{-(k-\ell)+c}$ this event has probability $o(n)$ by Lemma~\ref{firstmoment}, so we conclude that with high probability there is no Hamilton $\ell$-cycle in $H_p^+$.
\end{proof}

It follows from Lemma~\ref{optimal} that for any constant $c > 0$ Theorem~\ref{main} would not remain valid if the minimum degree condition were weakened to a condition that $\delta_{\ell'}(H) \geq f(n) n^{k-\ell}$ for some $f(n) = o(n)$. Likewise, for small $\alpha > 0$ the constant $c$ of Theorem~\ref{main} must satisfy $c < \ell/\lfloor\frac{1}{4\alpha k}\rfloor$, that is, $c$ declines with $\alpha$.

\section{The absorbing lemma} \label{sec:absorbing}

In this section we prove our absorbing lemma, Lemma~\ref{Lem:Absorbing_Paths}. Our proof makes use of the following Chernoff-type bounds for sums of independent random selections of bounded integers. We omit the proofs of these since they are essentially identical to the proof of Chernoff's bound for binomially distributed random variables (see e.g.~\cite{JLR}), which is the case when $m = x_1 = \dots = x_t = 1$. We also use the first of these in the proof of our connecting lemma in Section~\ref{sec:connecting}.

\begin{proposition}\label{chernoffbound}
Fix $p \in [0, 1]$ and integers $x_1, \dots, x_t$ with $0 \leq x_i \leq m$ for each $i \in [t]$, and write $S := \sum_{i \in [t]} x_i$. Randomly form a subset $I \subseteq [t]$ by including each $i \in [t]$ in $I$ with probability~$p$ and independently of all other choices, and let $X := \sum_{i \in I} x_i$. Then $\E(X) = pS$, and for $0 < \delta < 3/2$ we have
$$\Pr\left( X \leq (1-\delta) \E(X)\right) \leq \exp\left(-\frac{\delta^2\E(X)}{2m}\right) \mbox{ and } \Pr\left( X \geq (1+\delta) \E(X)\right) \leq \exp\left(-\frac{\delta^2\E(X)}{3m}\right).$$
\end{proposition}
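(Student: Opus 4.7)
The plan is to mimic the classical Chernoff moment-generating-function proof, with one extra convexity step to accommodate the fact that the $x_i$ can be larger than $1$. Writing $X = \sum_{i=1}^t Y_i$ with $Y_i := x_i \mathbf{1}_{\{i \in I\}}$, the $Y_i$ are independent, take values in $\{0, x_i\} \subseteq [0, m]$, and satisfy $\E(Y_i) = p x_i$; linearity then gives $\E(X) = p S$ immediately, so only the tail bounds require work.

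For the tails I would apply Markov's inequality to $e^{sX}$ for a suitably chosen $s$, so that $\Pr(X \geq (1+\delta)\mu) \leq e^{-s(1+\delta)\mu} \prod_i \E(e^{s Y_i})$ with $\mu := p S$. The only place the argument deviates from the Bernoulli case is in bounding the individual MGFs $\E(e^{s Y_i})$. Here the key observation is that, by convexity of $y \mapsto e^{s y}$ on $[0, m]$, the secant line gives $e^{s y} \leq 1 + (y/m)(e^{s m} - 1)$ for every $y \in [0, m]$ and every real $s$. Taking expectations yields $\E(e^{s Y_i}) \leq 1 + (p x_i / m)(e^{s m} - 1) \leq \exp\!\bigl(p x_i (e^{s m} - 1)/m\bigr)$, and multiplying over $i$ gives $\E(e^{s X}) \leq \exp\!\bigl(\mu (e^{s m} - 1)/m\bigr)$.

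Substituting $u := s m$, the upper-tail bound from Markov's inequality becomes $\Pr(X \geq (1+\delta)\mu) \leq \exp\!\bigl((\mu/m)[(e^u - 1) - u(1+\delta)]\bigr)$. Optimizing at $u = \ln(1+\delta)$ and then invoking the elementary inequality $(1+\delta)\ln(1+\delta) - \delta \geq \delta^2/3$ (valid precisely on the interval $0 < \delta < 3/2$ that appears in the statement) produces the claimed bound $\exp(-\delta^2 \mu/(3m))$. The lower tail is handled analogously with $s < 0$, using the sharper inequality $(1-\delta)\ln(1-\delta) + \delta \geq \delta^2/2$ to obtain the improved constant $1/2$.

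There is essentially no obstacle here: the proof is a textbook Chernoff argument, and the only genuinely new ingredient is the one-line convexity bound $e^{s y} \leq 1 + (y/m)(e^{s m} - 1)$, which recovers the Bernoulli identity $\E(e^{s Y}) = 1 + p(e^s - 1)$ in the case $m = 1$, $x_i \in \{0,1\}$. Once this step is in place, the optimization in $u$ and the standard logarithm inequalities are identical to the classical proof, which is why the authors are content to defer to the reference.
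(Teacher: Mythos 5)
Your proposal is correct and is essentially the argument the paper has in mind: the authors omit the proof precisely because it is the classical moment-generating-function Chernoff argument, and your secant/convexity bound $e^{sy}\leq 1+(y/m)(e^{sm}-1)$ is exactly the one-line modification needed to handle summands bounded by $m$, yielding the stated $\exp(-\delta^2\E(X)/(3m))$ and $\exp(-\delta^2\E(X)/(2m))$ tails. The only cosmetic remark is that for the lower tail the optimization at $u=\ln(1-\delta)$ requires $\delta<1$, but for $1\leq\delta<3/2$ the event $X\leq(1-\delta)\E(X)$ is trivial (probability $0$ when $\E(X)>0$, or handled by letting $u\to-\infty$ when $\delta=1$), so the stated range is fine.
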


\begin{proposition}\label{binomchernoffbound}
Fix $p \in [0, 1]$ and let $X_1, \dots, X_n$ be independent random variables with $\Pr(X_i = m) = p$ and $\Pr(X_i = 0) = 1-p$ for each $i \in [n]$. Define $X := \sum_{i \in [n]} X_i$. Then $\E(X) = pnm$, and for $t > 6\E(X)$ we have
$$\Pr\left( X \geq \E(X) + t \right) 
\leq \exp\left(-\frac{t}{m}\right).$$
\end{proposition}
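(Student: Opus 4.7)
The plan is to reduce the claim to a standard large-deviation bound for a binomial variable. Each $X_i$ takes only the two values $0$ and $m$, so $X_i = m Y_i$ where $Y_i \sim \mathrm{Bernoulli}(p)$, and consequently $X = m Y$ with $Y := \sum_{i \in [n]} Y_i \sim \mathrm{Bin}(n,p)$. This immediately gives $\E(X) = mnp$. Writing $\mu := np$ and $s := t/m$, the statement becomes: whenever $s > 6\mu$,
\[
\Pr(Y \geq \mu + s) \leq \exp(-s).
\]

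I would prove this by the classical exponential moment (Chernoff) method. For any $\lambda > 0$, Markov's inequality applied to $e^{\lambda Y}$, combined with independence of the $Y_i$ and the standard bound $1 + p(e^\lambda - 1) \leq \exp(p(e^\lambda - 1))$, yields
\[
\Pr(Y \geq \mu + s) \leq e^{-\lambda(\mu+s)} \E(e^{\lambda Y}) \leq \exp\bigl(-\lambda(\mu+s) + \mu(e^\lambda-1)\bigr).
\]
Optimising with the canonical choice $\lambda := \ln(1 + s/\mu)$ then produces the usual Cram\'er rate
\[
\Pr(Y \geq \mu + s) \leq \exp\bigl(s - (\mu+s)\ln(1+s/\mu)\bigr).
\]

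The only step requiring any thought is verifying that the exponent above is bounded by $-s$ in the regime $s > 6\mu$, i.e.\ that $(\mu+s)\ln(1+s/\mu) \geq 2s$. Setting $x := s/\mu$ this reduces to the one-variable inequality $f(x) := (1+x)\ln(1+x) - 2x \geq 0$ for $x > 6$, which I would handle by checking the endpoint $f(6) = 7\ln 7 - 12 > 0$ and noting that $f'(x) = \ln(1+x) - 1 > 0$ throughout the relevant range. This is the sole place the hypothesis $t > 6\E(X)$ is used; the constant $6$ is simply chosen to clear this inequality with a comfortable margin (in fact any constant larger than about $3.5$ would suffice).

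In short, the proof is essentially routine, and the only \emph{obstacle} is bookkeeping of constants: one must choose the threshold on $t$ large enough that the Cram\'er rate outpaces the linear contribution $+s$ in the exponent by a factor of two, so as to leave $-s$ behind after cancellation. Everything else is the textbook binomial Chernoff calculation, which is why the paper simply cites it in place of a proof.
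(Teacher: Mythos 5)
Your proof is correct and follows exactly the route the paper intends: the paper omits the proof, remarking that it is essentially the standard Chernoff argument for binomial variables, and your reduction $X = mY$ with $Y \sim \mathrm{Bin}(n,p)$ followed by the exponential-moment bound and the verification $(1+x)\ln(1+x) \geq 2x$ for $x \geq 6$ is precisely that calculation. (Only your parenthetical aside is slightly off: the root of $(1+x)\ln(1+x)=2x$ is about $3.9$, not $3.5$, but this does not affect anything.)
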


We begin by defining the absorbing structures which we will use in the proof of Lemma~\ref{Lem:Absorbing_Paths}. 

\begin{definition} \label{defF}
For integers $2 \leq \ell < k$ we define $k$-graphs $F, F_{\reg}$ and $F_{\rand}$ as follows. Set $T := 3(k-\ell)+1$, and note that $Tk \equiv k \pmod {k-\ell}$, so there exist $\ell$-path $k$-graphs with $L := Tk$ vertices. The vertex set of $F$ is 
$$V(F) := \{v_i^j : i \in [k], j \in [T] \} \cup \{a_1, \dots, a_{k-\ell}\},$$
and $V(F)$ is partitioned into vertex classes $V_i$ for $i \in [k]$, where for each $i \in [k-\ell]$ we have $V_i := \{v_i^j : j \in [T]\} \cup \{a_i\}$ and for each $i \in [k] \sm [k-\ell]$ we have $V_i := \{v_i^j : j \in [T]\}$. Furthermore, the edges of $F$ are all sets which are edges of the $\ell$-path $P(F)$ with vertex sequence
\begin{equation}\label{firstpath}(v_1^1, v_2^1, \dots v_k^1, v_1^2, v_2^2,\dots, v_k^2, \dots, v_1^T, \dots, v_k^T)\end{equation}
and all sets which are edges of the $\ell$-path $Q(F)$ with vertex sequence
\begin{align}
\big(&v_1^1,\dots, v_k^1, \nonumber \\
&a_{1}, v_2^2,\dots, v_k^2, \nonumber\\
&v_1^3, a_2, v_3^3, \dots, v_k^3,\nonumber\\
&v_1^4, v_2^4, a_3, v_4^4\dots, v_k^4,\nonumber\\
& \vdots \nonumber\\
& v_1^{k-\ell+1}, \dots, v_{k-\ell-1}^{k-\ell +1}, a_{k-\ell}, v_{k-\ell+1}^{k-\ell+1}, \dots, v_{k}^{k-\ell+1},\nonumber\\
&v_{1}^{k-\ell+2}, \dots, v_{k}^{k-\ell+2}, \nonumber\\ 
& \vdots \nonumber \\
&v_{1}^{2(k-\ell)}, \dots, v_{k}^{2(k-\ell)}, \nonumber\\ 
&v_{1}^{2},v_{2}^{3},\dots, v_{k-\ell}^{k-\ell+1}, \nonumber\\
&v_{1}^{2(k-\ell)+1}, \dots, v_{k}^{2(k-\ell)+1}, \nonumber\\
& \vdots \nonumber \\ 
&v_{1}^{T}, \dots, v_{k}^{T}\big). \label{secondpath}
\end{align}
In other words, the vertex sequence for $Q(F)$ is formed by the following modifications of the vertex sequence for $P(F)$ in~\eqref{firstpath}: first replace $v_i^{i+1}$ by $a_i$ for each $i \in [k-\ell]$, then insert the replaced vertices as a consecutive subsequence immediately following $v_k^{2(k-\ell)}$.

Set $F_A := \{a_1, \dots, a_{k-\ell}\}$, and define $F_{\rand}$ to be the $k$-graph with vertex set $V(F) \sm F_A$ whose edges are all edges of $Q(F)$ which contain both $v_{k-\ell}^{k-\ell+1}$ and~$v_{1}^{2(k-\ell)+1}$. Also define $F_{\reg}$ to be the $k$-graph on vertex set $V(F)$ with edge set $F \sm F_{\rand}$. Finally set $F^{\pbeg} := (v_1^1,\dots v_\ell^1)$ and $F^{\pend} := (v_{k-\ell+1}^{T}, \dots, v_{k}^{T})$; we refer to $F^{\pbeg}$ and $F^{\pend}$ as the \emph{ends} of~$F$.
\end{definition}

The following properties of $F$ follow immediately from the definition.

\begin{proposition}
For every $k \geq 3$ the $k$-graph $F$ defined above satisfies the following properties.
\begin{enumerate}[noitemsep, label=(\roman*)]
\item $P(F)$ is an $\ell$-path in $F$ with vertex set $V(F) \sm F_A$ and ends $F^{\pbeg}$ and $F^{\pend}$.
\item $Q(F)$ is an $\ell$-path in $F$ with vertex set $V(F)$ and ends $F^{\pbeg}$ and $F^{\pend}$.
\item $F$ has $ L + k - \ell = Tk+k-\ell < 3k^2$ vertices.
\item $F_{\reg}$ is $k$-partite with vertex classes $V_1, \dots, V_k$.
\item $F_{\rand}$ has $L$ vertices and consists of an $\ell$-path of length $t := \kFrac - 1$ which has no vertices in common with $F_A$, $F^{\pbeg}$ or $F^{\pend}$, and also $L-t(k-\ell)-\ell$ isolated vertices.
\end{enumerate}
\end{proposition}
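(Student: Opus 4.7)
The plan is to verify each clause by reading off information from the two explicit vertex sequences \eqref{firstpath} and \eqref{secondpath} defining $P(F)$ and $Q(F)$. The arithmetic input I would isolate at the start is $L = Tk = (3(k-\ell)+1)k \equiv k \equiv \ell \pmod{k-\ell}$, so both $L$ and $L+(k-\ell)$ are valid vertex counts for an $\ell$-path $k$-graph. Parts (i)--(iii) are then essentially immediate: the $L$ symbols in \eqref{firstpath} are pairwise distinct, so that sequence defines an $\ell$-path on $V(F)\sm F_A$ with the stated ends; the sequence \eqref{secondpath} is \eqref{firstpath} with the swap $v_i^{i+1}\leftrightarrow a_i$ ($i \in [k-\ell]$) followed by insertion of the displaced vertices as a consecutive block immediately after position $2(k-\ell)k$, so it uses each vertex of $V(F)$ exactly once and, since all modifications occur strictly between positions $k+1$ and $L-\ell$, retains the same ends; finally $|V(F)| = L + (k-\ell) = 3k(k-\ell) + 2k - \ell$, and $2k < \ell(3k+1)$ for $\ell \geq 2$ yields $|V(F)| < 3k^2$.

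For (iv), the key observation is that in \eqref{firstpath} the vertex at position $rk + i$ lies in $V_i$, so the class pattern has period $k$ and every edge of $P(F)$ (a window of $k$ consecutive positions) picks up exactly one vertex from each class. In \eqref{secondpath} the swap $v_i^{i+1}\leftrightarrow a_i$ preserves classes (since $a_i \in V_i$), and the inserted block has class pattern $V_1, V_2, \dots, V_{k-\ell}$, so periodicity extends unbroken through and past the insertion with one exception: at the junction from position $2(k-\ell)k + (k-\ell)$ (class $V_{k-\ell}$) to position $2(k-\ell)k + (k-\ell) + 1$ (class $V_1$), the classes $V_{k-\ell+1}, \dots, V_k$ are skipped. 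An edge of $Q(F)$ therefore fails to be $k$-partite iff its window straddles this junction, equivalently iff it contains both $v_{k-\ell}^{k-\ell+1}$ and $v_1^{2(k-\ell)+1}$; these are exactly the edges moved from $F$ into $F_{\rand}$.

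For (v), $|V(F_{\rand})| = |V(F)| - |F_A| = L$. The edges of $F_{\rand}$ identified in (iv) correspond to starting positions $s \equiv 1 \pmod{k-\ell}$ lying in the length-$(k-1)$ interval $[2(k-\ell)k - \ell + 2,\, 2(k-\ell)k + (k-\ell)]$; a short count using the identity $\lceil \ell/(k-\ell)\rceil = \lfloor(\ell-1)/(k-\ell)\rfloor + 1$ gives exactly $\kFrac - 1 = t$ such positions. Because these positions are consecutive in the $\ell$-path edge ordering of $Q(F)$, the corresponding edges form a sub-$\ell$-path of length $t$ inside $Q(F)$, whose $t(k-\ell) + \ell$ vertices lie strictly between positions $\ell + 1$ and $L + (k-\ell) - \ell$ and are disjoint from $F_A$ by construction; hence they avoid $F^{\pbeg}$, $F^{\pend}$, and $F_A$. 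Every other vertex of $V(F_{\rand})$ sits in no edge of $F_{\rand}$, accounting for the remaining $L - t(k-\ell) - \ell$ isolated vertices. The only non-mechanical ingredient is the straddling-edge count in (v), which reduces to the elementary ceiling identity above once the break-position in the class pattern has been pinned down in (iv).
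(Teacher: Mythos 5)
Your proposal is correct and follows essentially the same route as the paper: all parts are read off the explicit vertex sequences, part (iv) comes from locating the single break in the periodic class pattern at the junction $v_{k-\ell}^{k-\ell+1},v_1^{2(k-\ell)+1}$, and part (v) counts the straddling edges as $\lfloor (k-1)/(k-\ell)\rfloor = \kFrac-1$, exactly as in the paper (your interval-of-start-positions count and the paper's ``$k-\ell$ new vertices per edge'' count are the same computation). Your write-up is merely more explicit, e.g.\ in verifying that the $\ell$-path inside $F_{\rand}$ avoids $F_A$ and the ends, which the paper leaves implicit.
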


\begin{proof}
Both (i) and (ii) are immediate from our choice of edges of $F$ and $F(A)$, and~(iii) from our choice of~$V(F)$, since $Tk+k-\ell = (3k+1)(k-\ell)+k < 3k^2$. For~(iv) observe that every sequence of $k$ consecutive vertices in~\eqref{firstpath} or~\eqref{secondpath} contains one vertex from each of the $k$ vertex classes, except for the sequences of $k$ consecutive vertices in~\eqref{secondpath} which contain both $v_{k-\ell}^{k-\ell+1}$ and $v_{1}^{2(k-\ell)+1}$, but each edge of this type is in $F_{\rand}$ by definition. Finally, for~(v) observe that $v_{1}^{2(k-\ell)+1}$ is the $((2k+1)(k-\ell)+1)$-th vertex of the sequence in~\eqref{secondpath}, and therefore is the first vertex of some edge. Since each edge of an $\ell$-path contains $k-\ell$ vertices which are not contained in the subsequent edge the number of edges which contain both $v_{k-\ell}^{k-\ell+1}$ and $v_{1}^{2(k-\ell)+1}$ is $\left\lfloor \tfrac{k-1}{k-\ell} \right\rfloor = \kFrac - 1$, as claimed.
\end{proof}

In particular, properties (i) and (ii) show that the $\ell$-path $P(F)$ can absorb $A$ in $F$ (recall that this means there is an $\ell$-path $Q$ in $F$ with vertex set $P(F) \cup A$ and with the same ends as~$P(F)$, and $Q(F)$ has this property).

As described in the introduction, to prove Lemma~\ref{Lem:Absorbing_Paths} we first show that almost all ordered $(k-\ell)$-tuples $S$ of vertices of $H$ extend to many copies of $F_{\reg}$ in $H$ in which $S$ plays the role of~$F_A$ (no random edges are involved in this step). We do this in Lemma~\ref{extension}. Following this, we show that when we expose the random edges of $H_p^+$ we find with high probability that many of these extensions in fact give copies of $F$ in $H_p^+$; this is done in Lemma~\ref{numberofrandompaths}. Finally, at the end of the section we prove Lemma~\ref{Lem:Absorbing_Paths} itself by making an appropriate random selection of such copies of $F$ (with the vertices of $F_A$ removed). However, for each of these steps counting unlabelled copies of $F$ presents certain notational inconvenience, and to avoid these we instead count injective maps from $V(F)$ to $V(H)$ which embed $F$ in $H$. To this end, for the rest of this section fix an identification of the vertices of $V(F)$ with the integers $\{1, \dots, L+(k-\ell)\}$ such that the vertices of $F_A$ correspond to the integers $\{L+1, \dots, L+(k-\ell)\}$. Then, given a $k$-graph $H$, we can think of maps $\pi: [L+(k-\ell)] \to V(H)$ as potential embeddings of $F$ in $H$, and count appropriate families of such maps.

Given a $k$-graph $H$ on $n$ vertices, say that an ordered $(k-\ell)$-tuple $A = (a_1, \dots, a_{k-\ell})$ of vertices of $H$ is \emph{$(\gamma, F_{\reg})$-extensible} in $H$ if there are at least $\gamma n^L$ injective maps $\pi: [L+(k-\ell)] \to V(H)$ for which $\pi(L+i) = a_i$ for each $i \in [k-\ell]$ and $\pi(e) \in H$ for every $e \in F_{\reg}$ (in other words, $\pi$ embeds $F_{\reg}$ in $H$ so that the vertices of $F_A$ correspond to the vertices of $A$). We now present an extension lemma which states that our minimum degree condition on $H$ ensures that almost all ordered $(k-\ell)$-tuples of vertices of $H$ are $(\gamma, F_{\reg})$-extensible in $H$.

\begin{lemma} \label{extension}
Fix integers $2 \leq \ell < k$, and suppose that $1/n \ll \gamma \ll \eta \ll \alpha$. If $H$ is a $k$-graph on~$n$ vertices with $\delta_{k-\ell}(H) \geq \alpha n^{k-\ell}$, then all but at most $\eta n^{k-\ell}$ ordered $k-\ell$-tuples $S$ of vertices of $H$ are $(\gamma, F_{\reg})$-extensible in $H$.
\end{lemma}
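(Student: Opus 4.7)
My plan is a direct per-$A$ counting argument: for each ordered $(k-\ell)$-tuple $A=(a_1,\ldots,a_{k-\ell})$ of distinct vertices of $H$, outside a small exceptional set of size at most $\eta n^{k-\ell}$, I will exhibit at least $\gamma n^L$ injective extensions $\pi:V(F)\to V(H)$ with $\pi(F_A)=A$ and $\pi(e)\in E(H)$ for every $e\in F_{\reg}$. A naive averaging (bounding $\sum_A N(A)$ from below) only produces a bound of the shape ``a constant fraction of $A$'s are good'', which is too weak, so we must work per-$A$.

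The idea is to build the extension greedily along an ordering of $V(F)\setminus F_A$ tailored to the minimum $(k-\ell)$-degree condition. Because $F_{\reg}$ is the union of the $\ell$-path $P(F)$ with vertex sequence~\eqref{firstpath} and the hypergraph $Q(F)\setminus F_{\rand}$ following~\eqref{secondpath}, a careful inspection of Definition~\ref{defF} produces an ordering in which each edge $e\in F_{\reg}$ is ``completed'' at a well-defined step by adding a batch of at most $\ell$ new vertices to at least $k-\ell$ already-placed vertices of $e$. At a step where the batch has exactly $\ell$ new vertices, the minimum $(k-\ell)$-degree condition gives at least $\alpha n^{\ell}-O(n^{\ell-1})$ valid continuations (the correction enforcing injectivity). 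At a step where the batch has $\ell'<\ell$ new vertices, the relevant count is the $(k-\ell')$-degree of the already-placed vertices of $e$, which by an averaging consequence of $\delta_{k-\ell}(H)\geq\alpha n^{\ell}$ is at least $\beta n^{\ell'}$ for a constant $\beta=\beta(\alpha,k)>0$ on all but $\eta' n^{k-\ell'}$ of the $(k-\ell')$-tuples.

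Multiplying the choice counts over the $O(k^2)$ greedy steps produces $N(A)\geq\gamma n^L$ for every ``good'' $A$, namely one whose greedy extension never forces us to extend from a sub-tuple of atypically low degree. The number of bad $A$'s is bounded by a union bound over the finitely many steps: at each step the bad sub-tuples number at most $\eta' n^{k-\ell'}$, and careful bookkeeping (tracking which coordinates of the bad sub-tuple come from $A$ itself and which from earlier greedy choices) gives that the total number of bad initial $A$'s contributed by that step is $O(\eta') n^{k-\ell}$. Summing over the constant number of steps and taking $\gamma\ll\eta'\ll\eta\ll\alpha$ in the constant hierarchy yields at most $\eta n^{k-\ell}$ bad $A$'s in total.

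The main obstacle is the first step: exhibiting the greedy-compatible ordering of $V(F)\setminus F_A$ and verifying its properties by a case analysis of $F_{\reg}$. The design of $F$ in Definition~\ref{defF}---replacing the vertices $v_i^{i+1}$ by $a_i$ for $i\in[k-\ell]$, relocating the replaced vertices as a contiguous block immediately after $v_k^{2(k-\ell)}$, and isolating $F_{\rand}$ as an $\ell$-subpath of $Q(F)$ that avoids both $F_A$ and the ends $F^{\pbeg},F^{\pend}$---is engineered precisely so that such an ordering exists and so that each edge of $F_{\reg}$ can be accounted for using only the minimum $(k-\ell)$-degree of $H$ (with small batches handled by the averaging-degree statement). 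Once the ordering is in hand, the counting and the union bound proceed along standard lines.
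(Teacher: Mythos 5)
Your plan hinges on two claims, and both fail. First, the ``averaging consequence'' you invoke for the small batches --- that all but $\eta' n^{k-\ell'}$ of the $(k-\ell')$-tuples (with $\ell'<\ell$) have degree at least $\beta n^{\ell'}$ --- does not follow from $\delta_{k-\ell}(H)\geq\alpha n^{\ell}$, and is in fact false. Take $V=A\cup B$ with $|A|=\eps n$ and let the edges be all $k$-sets with at least $\ell$ vertices in $A$: every $(k-\ell)$-set has degree at least $\binom{|A|}{\ell}=\Omega(n^{\ell})$, yet every $(k-\ell+1)$-set inside $B$, i.e.\ almost all of them, has degree $0$. Averaging only yields that a \emph{constant fraction} of the larger tuples have large degree, which is exactly the weakness you said at the outset was too weak to give the lemma; tracking which coordinates of a bad sub-tuple come from $A$ cannot upgrade a constant-fraction statement to an almost-all statement. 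Second, the greedy-compatible ordering you defer to ``a careful inspection of Definition~\ref{defF}'' does not exist in the problematic cases, so the argument cannot be rescued by avoiding small batches. When $2\ell>k$, two consecutive edges of $P(F)$ share $\ell>k-\ell$ vertices; whichever of them is completed later already has at least $\ell$ previously-placed vertices at its completion step (and if you complete both in one batch, the count you need is the number of two-edge extensions of a placed $(k-\ell)$-set, which again is not controlled pointwise by the $(k-\ell)$-degree condition --- in the example above it can be zero for specific placed tuples). Even for $\ell\le k/2$, the edges of $Q(F)$ through the prescribed vertices $a_i$ consist of $a_i$ together with $k-1$ vertices of $P(F)$, so one of the interlocking $P$- or $Q$-edges is always forced to be completed with more than $k-\ell$ of its vertices already fixed. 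In short, the missing idea is precisely how to count embeddings of a fixed $k$-partite $k$-graph rooted at a prescribed $(k-\ell)$-tuple using only a minimum $(k-\ell)$-degree condition; this is what the paper's (quite different) proof supplies via the strong hypergraph regularity lemma, degree inheritance to the reduced $k$-graph, and an extension lemma for regular complexes, crucially exploiting that $F_{\reg}$ is $k$-partite and that no edge of $F_{\reg}$ contains more than one vertex of $F_A$. (A minor point: the degree hypothesis should be read as $\delta_{k-\ell}(H)\geq\alpha n^{\ell}$, as in the application; your use of it is consistent with that.)
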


Lemma~\ref{extension} can be proved by a standard and straightforward application of hypergraph regularity, using the fact that $F_{\reg}$ is a $k$-partite $k$-graph, which implies that we can find many copies of~$F_{\reg}$ within the clusters of an edge of an appropriately-defined reduced $k$-graph. Because this is such a standard application of hypergraph regularity we did not feel it merited the introduction of formal definitions of hypergraph regularity (which are notationally very complex). Instead we simply sketch the key details of the proof.

\begin{proof}[Proof sketch]
Apply the strong hypergraph regularity lemma to $H$ (e.g. this could be the form due to R\"odl and Schacht~\cite{RS}, or the recent regular slice lemma of Allen, B\"ottcher, Cooley and Mycroft~\cite{ABCM} which requires somewhat less notation to be introduced). From this we obtain a partition $\mathcal{C}$ of $V(H)$ into a large constant number $s$ of clusters of equal size $m$ and a reduced $k$-graph $R$ with vertex set $\mathcal{C}$ (so the vertices of $R$ are the clusters) whose edges are all $k$-tuples of clusters $\{X_1, \dots, X_k\} \subseteq \mathcal{C}$ such that, loosely speaking, an appropriate subset of the edges of~$H$ with one vertex in each $X_i$ form a `regular' and dense $k$-partite $k$-graph. We then have the following observations.
\begin{enumerate}[label=(\arabic*)]
\item For almost all ordered $(k-\ell)$-tuples $S$ of vertices of $H$ the vertices of $S$ are contained in~$k-\ell$ distinct clusters of $R$ (this follows by a straightforward counting argument using the fact that there are a large number of clusters of equal size). 
\item For almost all sets $e' \in \binom{\mathcal{C}}{k-\ell}$ of $k-\ell$ clusters of $R$, there is an edge $e \in R$ with $e' \subseteq e$. This is due to the well-known fact (see e.g.~\cite[Lemma~4.3]{KMO}) that the reduced $k$-graph $R$ of $H$ `almost inherits' the minimum degree condition $\delta_{k-\ell}(H) \geq \alpha n^{k-\ell}$, in that almost all $(k-\ell)$-tuples $e'$ of vertices of $R$ have $\deg_R(e') \geq (1-\eps) \alpha s^{k-\ell}$ for a small constant $\eps > 0$.
\item For every edge $e = \{X_1, \dots, X_k\}$ of $R$, almost all ordered $(k-\ell)$-tuples $x = (x_1, \dots, x_{k-\ell})$ of vertices of $H$ with $x_j \in X_j$ for each $j \in [k-\ell]$ extend to many copies of $F_{\reg}$ in which~$x$ plays the role of $F_A$ (in order). Specifically, since $F_{\reg}$ has $L$ vertices outside $F_A$, the number of extensions is at least $c m^L = (c/s^L) n^L$ for some constant $c > 0$. This follows from the extension lemma (for instance in the form proved by Cooley, Fountoulakis, K\"uhn and Osthus~\cite{CFKO}), the fact that $F_{\reg}$ is $k$-partite with $L \leq 3k^2$ vertices, and, crucially, that no edge of $F_{\reg}$ contains more than one vertex of $F_A$.
\end{enumerate} 
Combining these observations immediately yields the lemma.
\end{proof}

We now turn our attention to $F_{\rand}$. Recall for this that $H_p^+ = H \cup H'$, where $H'$ is drawn from $\hnpk$. By exposing the edges of $H'$ in $t$ rounds, we can assume that $H_1 \cup \dots \cup H_t \subseteq H'$, where each $H_t$ is independently drawn from $H_{n, q}^{(k)}$ for $q = p/t$. We take this approach and focus only on the copies of $F_{\rand}$ for which the $i$th edge of $F_{\rand}$ is an edge of $H_i$ for each~$i \in [t]$; by doing so we ensure the independence of key events in our argument. Given a constant proportion of all ordered $L$-tuples of vertices of $H$, the first part of our next lemma gives bounds on how many of these form copies of $F_{\rand}$ in the multi-round process described above, whilst the second part bounds how many of these copies contain a given vertex. Note that together with Lemma~\ref{extension} this shows that almost all $(k-\ell)$-tuples of vertices of $H$ can be extended to many copies of $F$ in this way.

\begin{lemma}\label{numberofrandompaths}
Fix integers $2 \leq \ell < k$, define $t$ and $L$ as in Definition~\ref{defF}, fix $c < 1/t$ and $q \geq \frac{1}{t}n^{-(k-\ell)-c}$, and let $\beta = (1-ct)/2$, so $\beta > 0$. Arbitrarily label the edges of $F_\rand$ as $e_1, \dots, e_t$. Let $V$ be a set of $n$ vertices, and let $\Pi$ be the set of all injective functions $\pi: [L] \to V$. Also let $H_1, \dots, H_t$ be drawn independently from $\hnqk$ on $V$, and define $\Pi^* \subseteq \Pi$ to be the set of all~$\pi \in \Pi$ for which $\pi(e_i) \in H_i$ for every $i \in [t]$. Finally, for each $x \in [L]$ and each $v \in V$ let $\Pi^x_v$ be the set of all $\pi \in \Pi$ with $\pi(x) = v$. Then for every $\gamma > 0$ and every $\Pi' \subseteq \Pi$ of size $|\Pi'| \geq \gamma n^L$ we have
$$\Pr\left(\frac{\gamma q^t n^L}{2^t} \leq |\Pi^* \cap \Pi'| \leq 2^tq^tn^L\right) = 1- \exp\left(-\Omega(n^{2\beta})\right),$$
and for every $v \in V$ and $x \in [L]$ we have
$$\Pr\left(\frac{q^t n^{L-1}}{2^{t+1}} \leq |\Pi^* \cap \Pi^x_v| \leq 2^tq^t n^{L-1} \right) = 1-\exp\left(-\Omega(n^{\beta})\right).$$
\end{lemma}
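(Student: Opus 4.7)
The plan is to expose $H_1, \dots, H_t$ in $t$ independent rounds and apply Proposition~\ref{chernoffbound} at each round. Let $\Pi^*_j := \{\pi \in \Pi' : \pi(e_i) \in H_i \text{ for all } i \leq j\}$ (with $\Pi^x_v$ replacing $\Pi'$ for the second claim). Conditioning on $\Pi^*_j$, the next count is
$$|\Pi^*_{j+1}| = \sum_{f \in \binom{V}{k}} N_f \,\mathbf{1}[f \in H_{j+1}], \qquad N_f := |\{\pi \in \Pi^*_j : \pi(e_{j+1}) = f\}|,$$
which is a weighted sum of independent Bernoulli$(q)$ variables and thus fits the setting of Proposition~\ref{chernoffbound}. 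Injectivity of $\pi$ yields the pigeonhole bound $N_f \leq k! n^{L-k}$, improved by a factor $1/n$ when the pinned coordinate $x$ of $\Pi^x_v$ lies outside $e_{j+1}$.

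For the first claim, I would induct on $j$ to show $\gamma q^j n^L / 2^j \leq |\Pi^*_j| \leq 2^j q^j n^L$. Proposition~\ref{chernoffbound} with $\delta = 1/2$ and $m = k! n^{L-k}$ gives per-round failure probability $\exp(-\Omega(\gamma q^{j+1} n^k / 2^j))$. The hypothesis $c < 1/t$ and the inequality $k - t(k-\ell) \geq 1$ together force $q^{j+1} n^k \geq n^{2\beta}/t^{j+1}$ for every $j+1 \leq t$, so each round fails with probability at most $\exp(-\Omega(n^{2\beta}))$; a union bound over the $t$ rounds and the two tails yields the first claim.

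The second claim is the same induction with target interval $[q^j n^{L-1}/2^{j+1}, 2^j q^j n^{L-1}]$. Rounds in which $x \notin e_{j+1}$ use the sharper bound $N_f \leq k! n^{L-1-k}$ and again produce the $\exp(-\Omega(n^{2\beta}))$ per-round estimate. The main obstacle will be a round with $x \in e_{j+1}$: only $N_f \leq (k-1)! n^{L-k}$ is available, so the Chernoff exponent degrades to $\Omega(q^{j+1} n^{k-1})$. Writing $r := k - t(k-\ell) \in \{1, \dots, k-\ell\}$, this equals $\Omega(n^{r-1-tc})$ at the last round, which is $\Omega(n^{2\beta})$ for $r \geq 2$ but collapses to $n^{-tc}$ when $r = 1$, breaking the iterated strategy.

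To cover the residual $r = 1$ case I would abandon the iteration at the last round and apply Janson's inequality directly to $X := \sum_{\pi \in \Pi^x_v} \prod_i \mathbf{1}[\pi(e_i) \in H_i]$. A routine count of pairs $\pi \sim \pi'$ sharing some edge image gives $\Delta = O(n^{2L-k-1} q^{2t-1})$, which together with $\mu = \E X \sim n^{L-1} q^t$ yields $\mu^2/(\mu + \Delta) = \Omega(qn^{k-1}) = \Omega(n^{\ell - 1 - c}) \geq \Omega(n^\beta)$, the last inequality holding because $\ell \geq 2$. Janson's inequality then supplies the required lower-tail bound $\exp(-\Omega(n^\beta))$; the matching upper tail is handled by a parallel moment-method or Kim--Vu calculation for the low-degree polynomial $X$. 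A final union bound over the $nL$ choices of $(v, x)$ costs only a $\log n$ factor in the exponent, which the $\Omega(n^\beta)$ term absorbs.
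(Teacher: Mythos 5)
Your scheme for the first statement, and the iterated-exposure plus weighted-Chernoff (Proposition~\ref{chernoffbound}) induction for the second, coincide with the paper's proof, and you correctly isolate the critical configuration: the final round with $x$ in the exposed edge and $r = k - t(k-\ell) = 1$, where the crude multiplicity bound $N_f \leq k!\,n^{L-k}$ makes the per-round exponent collapse. Your Janson substitute for the \emph{lower} tail in that case is a genuinely different and workable route: the computation $\mu^2/(\mu+\Delta) = \Omega(qn^{k-1}) = \Omega(n^{\ell-1-c}) = \Omega(n^{\beta})$ is correct, and Janson's inequality is insensitive to the multiplicity problem. The paper does not use Janson anywhere; it handles this case inside the same induction.

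The gap is the \emph{upper} tail in that same case, which the lemma states (and which is genuinely used in the proof of Lemma~\ref{Lem:Absorbing_Paths}, where $|\Pi^*\cap\Pi^x_v| \leq 2^tq^tn^{L-1}$ controls the number of intersecting pairs of embeddings). Deferring it to ``a parallel moment-method or Kim--Vu calculation'' is not a routine patch: the obstruction that breaks the iterated Chernoff breaks these too. Conditionally on rounds $1,\dots,t-1$ the count is $\sum_S B_S\,\mathbf{1}[S \in H_t]$ with a priori $\max_S B_S = \Theta(n^{L-k})$; equivalently, the order-$t$ derivatives of your polynomial $X$ have expectation $\Theta(n^{L-k+1-\ell})$, which in the critical case (e.g.\ $k=3$, $\ell=2$, $t=2$) forces the standard Kim--Vu deviation term past the mean unless $\lambda \lesssim n^{\beta/2}$, so one obtains a strictly weaker exponent than the required $\exp(-\Omega(n^{\beta}))$; constant-factor upper tails for such degree-$t$ counts are exactly the ``infamous upper tail'' difficulty, with no Janson analogue. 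The paper's missing ingredient is a preliminary step using the first exposure round: with probability $1-\exp(-\Omega(n^{\beta}))$, for every pair $(R,S)$ consisting of the images of $e_1\cap e_t$ and $e_t\setminus e_1$, at most $qn^{L-k+\beta}$ embeddings surviving round $1$ are compatible with $(R,S)$ (the bound on $|\Phi'_{R,S}|$, proved via Proposition~\ref{binomchernoffbound}). This caps the maximum weight in the last round by $O(qn^{L-k+\beta})$, after which the very same Chernoff step delivers \emph{both} tails with exponent $\Omega(q^{t-1}n^{k-1-\beta}) = \Omega(n^{\beta})$. You need this bound on $\max_S B_S$, or some substitute for it, to complete your argument; without it your proposal proves only the lower tail of the second statement.
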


\begin{proof}
We begin with the first statement. So fix $\gamma > 0$ and a subset $\Pi' \subseteq \Pi$ of size $|\Pi'| \geq \gamma n^L$. For each $0 \leq i \leq t$ define $\Pi_i$ to be the set of injective functions $\pi \in \Pi$ for which $\pi(e_j) \in H_j$ for every $j \in [i]$. We expose the random $k$-graphs $H_1, \dots, H_t$ one by one and show by induction on~$i$ that for each $0 \leq i \leq t$, with probability at least $1 - \exp\left(-\Omega(n^{2\beta})\right)$ we have
\begin{equation} \label{eq:sizeofpi'}
\frac{\gamma q^i n^L}{2^{i}} \leq |\Pi_i \cap \Pi'| \leq  2^i q^i n^L.
\end{equation}
Since $\Pi_t = \Pi^*$, the case $i=t$ will then prove the statement. Observe first that~\eqref{eq:sizeofpi'} holds for $i=0$ with certainty (no random edges are involved in this statement). So suppose now that inequality~\eqref{eq:sizeofpi'} holds for some $i < t$ with probability at least $1- \exp\left(-\Omega(n^{2\beta})\right)$, and observe that the elements of $\Pi_{i+1}$ are precisely those $\pi \in \Pi_i$ with $\pi(e_{i+1}) \in H_{i+1}$. 
Write $\mathcal{S}:= \binom{V}{k}$, and for each $S \in \mathcal{S}$ let $A_S$ count the number of injections $\pi \in \Pi_i \cap \Pi'$ with $\pi(e_{i+1}) = S$.
Then $\sum_{S \in \mathcal{S}} A_S = |\Pi_i \cap \Pi'|$. Moreover for each $S \in \mathcal{S}$ we have $A_S \leq k!n^{L-k}$ since the images of the vertices in $e_{i+1}$ are fixed up to permutation.
Now expose the edges of $H_{i+1}$ and let $A$ count the number of number of injections $\pi \in \Pi_i \cap \Pi'$ with $\pi(e_{i+1}) \in H_{i+1}$. Then $\E(A) = q\sum_{S \in \mathcal{S}} A_S$, so by Proposition~\ref{chernoffbound}, and using~\eqref{eq:sizeofpi'} for the first and final inequalities, we have  
\begin{equation} \label{chernoffcount}
\frac{\gamma q^{i+1} n^L}{2^{i+1}} \leq \frac{q|\Pi_i \cap \Pi'|}{2} = \frac{q \sum_{S \in \mathcal{S}} A_S}{2} \leq A \leq 2 q \sum_{S \in \mathcal{S}} A_S = 2q|\Pi_i \cap \Pi'| \leq 2^{i+1}q^{i+1} n^L
\end{equation}
with probability 
$$ 1-2\exp\left(-\frac{\E(A)}{8k!n^{L-k}}\right)
\geq 1 - 2\exp\left(-\frac{\gamma q^{i+1} n^L}{2^{i+1} \cdot 4k!n^{L-k}}\right) 
\geq 1- \exp\left(-\Omega(q^tn^k)\right).$$
Since $t(k-\ell) = \left(\kFrac-1\right)(k-\ell) \leq k-1$ we have $q^tn^k \geq n^{k-t(k-\ell)-tc} \geq n^{1-tc} \geq n^{2\beta}$. It follows that~\eqref{eq:sizeofpi'} holds for $i+1$ with probability $1 - \exp\left(-\Omega(n^{2\beta})\right)$, completing the induction for~\eqref{eq:sizeofpi'}, and so completing the proof of the first part of the lemma.

\medskip
We now show how to modify the above argument to prove the second part of the statement.
Fix vertices $v \in V$ and $x \in [L]$, and assume for the moment that $t \geq 2$ and $x \in e_1 \cap e_t$. Define $r := |e_1 \cap e_t| \leq \ell$ and, given $R \in \binom{V}{r}$ and $S \in \binom{V}{k-r}$, let $\Phi_{R, S} \subseteq \Pi^x_v$ be the set of injections $\pi \in \Pi^x_v$ with $\pi(e_1 \cap e_t) = R$ and $\pi(e_t \sm e_1) = S$ and set $\Phi'_{R, S} = \Phi_{R, S} \cap \Pi_1$.  
Moreover, for each $S' \in \binom{V}{k-r}$ let $\Phi_{R, S, S'}$ be the set of injections $\pi \in \Pi^x_v$ with $\pi(e_1 \cap e_t) = R$, $\pi(e_t \sm e_1) = S$ and $\pi(e_1 \sm e_t) = S'$, and let $\Phi' _{R, S, S'} = \Phi_{R, S} \cap \Pi_1$.

Observe that for any pairwise-disjoint $R, S$ and $S'$ we have 
$$|\Phi_{R, S, S'}| = m := (r-1)! (k-r)! (k-r)! \frac{(n-2k+r)!}{(n-L)!} = \Theta(n^{L-2k+r}),$$
whilst $|\Phi_{R, S, S'}| = 0$ if $R, S$ and $S'$ are not pairwise-disjoint.
Also, $|\Phi'_{R, S}| = \sum_{S' \in \binom{V}{k-r}} |\Phi'_{R, S, S'}|$. In other words $|\Phi'_{R, S}|$ is the sum of a set of $\binom{n-k-r}{k-r}$ independent random variables which each take value $m$ with probability $q$ and value zero otherwise. So 
$$\E(|\Phi'_{R, S}|) = qm\binom{n}{k-r} = \Theta(qn^{L-k})$$
and we may apply Proposition~\ref{binomchernoffbound} to obtain
\begin{align*} 
\Pr \left( |\Phi'_{R, S}| < qn^{L-k+\beta} \right) 
&\geq 1 - \exp \left(-\Omega \left(\frac{qn^{L-k+\beta}}{n^{L-2k+r}}\right)\right)\\
&\geq 1-\exp\left(-\Omega(n^{\ell-r+\beta})\right)\geq 1-\exp\left(-\Omega(n^{\beta})\right).
\end{align*}
So we may take a union bound over all $R$ and $S$ to find that with probability $1-\exp\left(-\Omega(n^{\beta})\right)$ we have 
\begin{equation}\label{eq:sizephi'} 
|\Phi'_{R, S}| \leq qn^{L-k+\beta}
\end{equation}
for every $R \in \binom{V}{r}$ and $S \in \binom{V}{k-r}$.

We now return to the general case (i.e. we no longer assume that $t \geq 2$ and $x \in e_1 \cap e_t$), where we have two possibilities: either $x \notin \bigcap_{e \in F_{\rand}} e$, in which case by relabelling the edges $e_i$ if necessary we may assume $x \notin e_t$, or $x \in \bigcap_{e \in F_{\rand}} e$, in which case we certainly have $x \in e_1 \cap e_t$. By a similar argument to that used for the first statement of the lemma, we prove by induction on $i$ that for each $0 \leq i \leq t$ with probability at least $1 - \exp\left(-\Omega(n^{\beta})\right)$ we have 
\begin{equation} \label{eq:sizeofpiv}
\frac{q^i n^{L-1}}{2^{i+1}} \leq |\Pi_i \cap \Pi^x_v| \leq 2^i q^i n^{L-1}.
\end{equation}
Again the statement holds with certainty for $i=0$, whilst establishing the case $i=t$ will complete the proof. So suppose that inequality~\eqref{eq:sizeofpiv} holds for some $i < t$ with probability at least $1- \exp\left(-\Omega(n^{\beta})\right)$. For each $S \in \mathcal{S}$ let $B_S$ count the number of injections $\pi \in \Pi_i \cap \Pi^x_v$ with $\pi(e_{i+1}) = S$, so $\sum_{S \in \mathcal{S}} B_S = |\Pi_i \cap \Pi^x_v|$. Define $M := \max_{S \in \mathcal{S}} B_S$, so in all cases we have $M \leq k!n^{L-k}$. Then Proposition~\ref{chernoffbound} and an essentially identical calculation to~\eqref{chernoffcount} yield that~\eqref{eq:sizeofpiv} holds for $i+1$ with probability $p^*$, where
$$ p^* \geq 1-2\exp\left(-\frac{\E(B)}{8M}\right)
\geq 1 - \exp\left(-\Omega\left(\frac{q^{i+1} n^{L-1}}{M}\right)\right).$$
Recall that $t(k-\ell) \leq k-1$, so $k-1 - (t-1)(k-\ell) \geq k-\ell \geq 1$. For $i \leq t-2$, using this and the fact that $M \leq k!n^{L-k}$ and $qn < 1$, we then obtain
$$p^* \geq 1-\exp\left(-\Omega(q^{t-1} n^{k-1})\right) \geq 1-\exp\left(-\Omega(n^{k-1-(t-1)(k-\ell) -tc})\right) \geq 1-\exp\left(-\Omega(n^{2\beta})\right).$$
This leaves only the case when $i = t-1$. Suppose first that $t=1$, in which case using the fact that $M \leq k!n^{L-k}$
and our assumption that $\ell \geq 2$ we obtain 
$$p^* \geq 1-\exp\left(-\Omega(q n^{k-1}) \right) \geq 1-\exp\left(-\Omega(n^{k-1-(k-\ell)-c})\right) \geq 1-\exp\left(-\Omega(n^{2\beta})\right).$$
Next suppose that $x \notin e_t$ then we must have $M \leq k!n^{L-k-1}$, since in counting $B_S$ the image of $x$ is fixed, as well as that of $e_t$. We therefore again have 
$$p^* \geq 1-\exp\left(-\Omega(q^{t} n^{k})\right) \geq 1-\exp\left(-\Omega(n^{2\beta})\right),$$
where the second inequality holds by the same calculation as for the first statement of the lemma.
Finally suppose that $x \in e_1 \cap e_t$, in which case~\eqref{eq:sizephi'} yields $|M| \leq \theta(qn^{L-k+\beta})$, giving 
$$p^* \geq 1-\exp\left(-\Omega(q^{t-1} n^{k-1-\beta})\right) \geq 1-\exp\left(-\Omega(n^{\beta})\right).$$
So in all cases it follows that~\eqref{eq:sizeofpiv} holds for $i+1$ with probability $1 - \exp\left(-\Omega(n^{\beta})\right)$, completing the induction for~\eqref{eq:sizeofpiv}, and so completing the proof of the lemma.
\end{proof}

We are now ready to prove Lemma~\ref{Lem:Absorbing_Paths}.

\begin{proof}[Proof of Lemma~\ref{Lem:Absorbing_Paths}]
Fix integers $2 \leq \ell < k$, define $t:= \kFrac - 1$ and fix $c < 1/t$ and $\beta = 1-ct$, so $\beta > 0$. Introduce constants with $\xi \ll \zeta \ll \gamma \ll \eta \ll\alpha, 1/k$ and suppose that $n$ is sufficiently large. Let $L$, $F$, $F_{\rand}, F_{\reg}$ and $F_A$ be as in Definition~\ref{defF}, arbitrarily label the edges of $F_{\rand}$ as $e_1, \dots, e_t$, and let $H$ be a $k$-graph on $n$ vertices with $\delta_{k-\ell}(H) \geq \alpha n^{\ell}$.

Let $\mathcal{G}$ be the set of all ordered $(k-\ell)$-tuples of distinct vertices of $H$ which are $(\gamma, F_{\reg})$-extensible in $H$. Also define $\mathcal{B} \subseteq \binom{V(H)}{k-\ell}$ to consist of all $(k-\ell)$-sets $B$ for which some ordering of $B$ is not in $\mathcal{G}$. Since by Lemma~\ref{extension} at most $\eta^3 n^{k-\ell}$ ordered $(k-\ell)$-tuple of vertices of $H$ are not members of $\mathcal{G}$, we then have $|\mathcal{B}| \leq \eta^3 n^{k-\ell}$. 

Now fix a probability $p \geq n^{-(k-\ell)-c}$, define $q := p/t$ and let $H_1, \dots, H_t$ be independently drawn from $\hnqk$. Recall that $H_p^+ = H \cup H'$, where $H'$ is drawn from $\hnpk$, so we may assume that $H_i \subseteq H' \subseteq H_p^+$ for each $i \in [t]$. Let $\Pi$ be the set of all injective functions $\pi: [L] \to V(H)$, let $\Pi^* \subseteq \Pi$ be the set of all $\pi \in \Pi$ for which $\pi(e_i) \in H_i$ for every $i \in [t]$, and for each $x \in [L]$ and each $v \in V(H)$ let $\Pi^x_v$ be the set of all $\pi \in \Pi$ with $\pi(x) = v$. By Lemma~\ref{numberofrandompaths} we then have $q^tn^L/2^{t+1} \leq |\Pi^*| \leq 2^tq^tn^L$ with probability $1-\exp(-\Omega(n^{2\beta}))$, and for every $x \in [L]$ and $v \in V(H)$ we have $|\Pi^* \cap \Pi_v^x| \leq 2^tq^tn^{L-1}$ with probability $1-\exp(-\Omega(n^{\beta}))$. 

For every $A = (a_1, \dots, a_{k-\ell}) \in \mathcal{G}$ the fact that $A$ is $(\gamma, F_{\reg})$-extensible means that there are at least $\gamma n^L$ injective maps $\pi: [L+(k-\ell)] \to V(H)$ for which $\pi(L+i) = a_i$ for each $i \in [k-\ell]$ and $\pi(e) \in H$ for every $e \in F_{\reg}$. For each such $\pi$ let $\pi'$ be the restriction of $\pi$ to $[L]$, and let $\Pi_A$ denote the set of all maps $\pi'$ formed in this way. Since the images of the vertices $L+1, \dots, L+(k-\ell)$ were fixed in $\pi$ each such $\pi'$ is distinct, so we have $|\Pi_A| \geq \gamma n^L$. We may therefore apply Lemma~\ref{numberofrandompaths} to find that for each $A \in \mathcal{G}$ we have $|\Pi^* \cap \Pi_A| \geq \gamma q^tn^L/2^t$ with probability at least $1-\exp(-\Omega(n^{2\beta}))$.

Since there are at most $nL$ pairs $(v, x)$ with $v \in V(H)$ and $x \in [L]$, and $|\mathcal{G}| \leq n^{k-\ell}$, we may take a union bound to find that with high probability we have
\begin{enumerate}[label=(\alph*), noitemsep]
\item $q^tn^L/2^{t+1} \leq |\Pi^*| \leq 2^tq^tn^L$,
\item $|\Pi^* \cap \Pi_v^x| \leq 2^tq^tn^{L-1}$ for every $v \in V(H)$ and $x \in L$, and
\item $|\Pi^* \cap \Pi_A| \geq \gamma q^tn^L/2^t$ for every $A \in \mathcal{G}$.
\end{enumerate}
It therefore suffices to show that that if~(a),~(b) and~(c) all hold then we can find a set $\paths$ as in the statement of the lemma. To do this, choose a random subset $\Phi \subseteq \Pi^*$ by including each $\pi \in \Pi^*$ in $\Phi$ with probability $\zeta n/|\Pi^*|$, independently of all other choices. Then we have $\E (|\Phi|)=\zeta n$, and using~(a) and~(c) we find that for every $A \in \mathcal{G}$ we have $\E(|\Pi_A \cap \Phi|) = \zeta n |\Pi_A \cap \Pi^*|/|\Pi^*| \geq \zeta \gamma n/4^t$.
Applying a Chernoff bound we find that with probability $1-o(1)$ we have 
\begin{equation} \label{phibounds} 
|\Phi| \leq 2\zeta n \mbox{ and } |\Pi_A \cap \Phi| \geq  \frac{\zeta \gamma n}{4^{t+1}} \mbox{ for every $A \in \mathcal{G}$.}
\end{equation}

Let $\mathcal{I}$ denote the set of ordered pairs $(\pi, \pi')$ with $\pi, \pi' \in \Pi^*$ and $\pi \neq \pi'$ for which the images of $\pi$ and $\pi'$ intersect. Then for any $\pi \in \Pi^*$, the number of maps $\pi' \in \Pi^*$ with $(\pi, \pi') \in \mathcal{I}$ is at most $\sum_{x,y \in [L]} |\Pi^* \cap \Pi^x_{\pi(y)}|$ (we do not have equality as maps which intersect $\pi$ in more than one vertex are overcounted). Using~(a) and~(b) it follows that $|\mathcal{I}| \leq |\Pi^*| \cdot L^2 \cdot 2^t q^t n^{L-1} \leq 2^{2t+1}L^2|\Pi^*|^2/n$. So, defining $Z$ to be the number of pairs $(\pi, \pi') \in \mathcal{I}$ with $\pi, \pi' \in \Phi$, we have 
$$\E(Z) = \frac{2^{2t+1}L^2|\Pi^*|^2}{n} \cdot \left(\frac{\zeta n}{|\Pi^*|}\right)^2 = 2^{2t+1}L^2 \zeta^2 n$$
and so by Markov's inequality the event $Z \leq  4^{t+1} L^2 \zeta^2 n$ has probability at least $1/2$.

We may therefore fix a subset $\Phi \subseteq \Pi^*$ for which $Z \leq 4^{t+1} L^2 \zeta^2 n$ and such that~\eqref{phibounds} holds. Having done this, we form a subset $\Phi' \subseteq \Phi$ as follows. First, for every $(\pi, \pi') \in \mathcal{I}$ with $\pi, \pi' \in \Phi$ we remove both $\pi$ and $\pi'$ from $\Phi$; this results in at most $Z$ elements of $\Phi$ being removed. Second, remove any $\pi \in \Phi$ which is not in $\bigcup_{A \in \mathcal{G}} \Pi_A$. Using~\eqref{phibounds} we then have for every $A \in \mathcal{A}$ that
\begin{equation}\label{eqn:number_of_absorbing_paths_for_a_tuple}
|\Pi_A \cap \Phi'| \geq |\Pi_A \cap \Phi| - Z \geq  \frac{\zeta\gamma n}{4^{t+1}} - 4^{t+1} L^2 \zeta^2 n \geq \xi n. 
\end{equation}

Recall that $F$ contains an $\ell$-path $P(F)$ with vertex set $V(F) \sm F_A$ and ends $F^{\pbeg}$ and $F^{\pend}$. Note that since $\Phi' \subseteq \Phi \subseteq \Pi^*$, every $\pi \in \Phi'$ embeds $F_{\rand}$ in $H_p^+$. Furthermore, our choice of~$\Phi'$ implies that for every $\pi \in \Phi'$ we have $\pi \in \Pi_A$ for some $A = (a_1, \dots, a_{k-\ell}) \in \mathcal{G}$. This means that the extension of $\pi$ to $[L+k-\ell]$ obtained by setting $\pi(L+i) = a_i$ for each $i \in [k-\ell]$ is an embedding of $F_\reg$ in $H$, and also an embedding of $F$ in $H_p^+$ (the latter due to our previous observation that $\pi$ embeds $F_{\rand}$ in $H_p^+$). In particular, since $P(F) \subseteq F_{\reg} \sm F_A$, the image of $P(F)$ under $\pi$ is an $\ell$-path in $H[\pi([L])]$, which we denote by $P_\pi$. Define $\paths := \{P_\pi : \pi \in \Phi'\}$, so $|\paths| \leq |\Phi| \leq  2 \zeta n$ by~\eqref{phibounds}. Observe also that the paths in $\paths$ are vertex-disjoint by our choice of $\Phi'$, and that (a) is satisfied since each path in $\paths$ has $L \leq 3k^2$ vertices. Since $F$ also contains an $\ell$-path $Q(F)$ with vertex set $V(F)$ and ends $F^{\pbeg}$ and $F^{\pend}$, it follows that for every $A \in \mathcal{G}$ and every $\pi \in \Pi_A \cap \Phi'$ the path $P_\pi$ can absorb $A$ in $H_p^+$. So~\eqref{eqn:number_of_absorbing_paths_for_a_tuple} ensures that condition (c) is satisfied. 

It remains to satisfy condition (b). Recall for this that $|\mathcal{B}| \leq \eta^3 n^{k-\ell}$. Let $B$ be the set of of all vertices $v \in V(H)$ which are contained in more than $\eta n^{k-\ell-1}$ members of $|\mathcal{B}|$, so $|B| \leq \eta n/2$. Greedily choose for each $v \in B$ an edge $e(v)$ of $H$ which does not intersect $V(\paths) := \bigcup_{P \in \paths}V(P)$ or any edge $e(u)$ chosen for some previous $u \in B$. To see that this is possible, note that $V(\paths)$ and the previously-chosen edges cover at most $L |\paths| + k|B| \leq L 2\zeta n + k \eta n/2 \leq k \eta n$ vertices, and so at most $k \eta n^{k-1}$ edges contain $v$ and a vertex of either $V(\paths)$ or a previously-chosen $e(u)$. On the other hand, since $\delta_{k-\ell}(H) \geq \alpha n^{k-\ell}$, at least $\alpha \binom{n-1}{k-1} > k \eta n^{k-1}$ edges of $H$ contain $v$, so there is at least one feasible choice for $e(v)$. We add the edges $e(v)$ for each $v \in B$ to $\paths$; after doing this $\paths$ is a collection of at most $2 \zeta n + \eta n/2 \leq \eta n$ vertex-disjoint $\ell$-paths in $H$ (since each edge $e(v)$ is an $\ell$-path of length one in $H$), and the addition of these edges does not affect the validity of (a) or (c). However, since we now have $v \in V(\paths)$ for every $v \in B$, condition (b) is satisfied also, completing the proof.
\end{proof}

\section{The connecting lemma} \label{sec:connecting}
In this section we prove our connecting lemma, Lemma~\ref{Lemma:ConnectingPaths}. 
We begin by describing the connecting structures we will use. 
Fix $2 \leq \ell < k$, and define 
$$t := \left\lceil \frac{k}{k-\ell} \right\rceil - 1 \mbox{ and } b := 3(k-\ell)t+k.$$
Let $P$ be an $\ell$-path of length $3t+1$ with vertex set $[b]$, with vertices labelled in a natural order, that is, so that the edges of $P$ are $e_i := \{(k-\ell)i+1, \dots, (k-\ell)i+k\}$ for each $0 \leq i \leq 3t$.  For each $0 \leq i \leq t$ define $F_i$ to be the $k$-graph with vertex set $[b]$ and edge set
$$ E(F_i) = \{e_0, e_1, \dots, e_t\} \cup \{e_{2t+1-i}, \dots, e_{2t}\} \cup \{e_{2t+1}, \dots, e_{3t}\}.$$
In particular~$F_0$ consists of two vertex-disjoint $\ell$-paths, one of length $t$ and one of length $t+1$. To see this observe that each edge contains $k-\ell$ vertices not in the previous edge, so the first vertex of $e_{2t+1}$ is $(t+1)(k-\ell) \geq k$ vertices subsequent to the first vertex of $e_t$. We note for future reference that the number of isolated vertices in $F_0$ is $b - (2t+1)(k-\ell)-2\ell = t(k-\ell)-\ell$. Observe also that $F_t$ is precisely the path~$P$, whilst for each $i \in [t]$ we form $F_{i}$ from $F_{i-1}$ by adding the edge $e_{2t+1-i}$. Define ordered $\ell$-tuples $F^{\pbeg} := (1, \dots, \ell)$ and $F^{\pend} := (b-\ell+1, \dots, b)$; we refer to these as the \emph{ends} of each~$F_i$ (note that this coincides with the definition of ends of $P$). Similarly we define $F^{\pint} := \{\ell+1, \dots, b-\ell\}$, and refer to the vertices of $F^{\pint}$ as \emph{interior vertices} of each $F_i$.

In our proof of Lemma~\ref{Lemma:ConnectingPaths} we will iteratively connect paths together until only a constant number of paths remain. The main difficulty is then to connect this constant number of paths to form a cycle. We achieve this by the following lemma, which essentially states that Lemma~\ref{Lemma:ConnectingPaths} holds for a large constant number of paths (by taking the ends of these paths as the input $\ell$-tuples). Similarly as in the previous section, to prove this lemma it is notationally convenient to count injective functions from $V(P)$ to $V(H)$ rather than copies of $P$ in $H$.

\begin{lemma}\label{connectfewpaths}
Fix integers $2 \leq \ell < k$, define $t := \kFrac - 1$ and $b := 3(k-\ell)t+k$, and fix a constant $c < 1/t$. Suppose that $\beta \ll \alpha, 1/k$. Let $H$ be a $k$-graph on~$n$ vertices and for some $s \leq 1/\beta$ let $a_1, \dots, a_s$ and $b_1, \dots, b_s$ be $2s$ ordered $\ell$-tuples of vertices of $H$ such that the sets $a_r \cup b_r$ for $r \in [s]$ are pairwise vertex-disjoint. 
Suppose that, for some set $X \subseteq V(H) \sm \bigcup_{r \in [s]} (a_r \cup b_r)$, 
every set $S \in \binom{V(H)}{\ell}$ satisfies $\deg_H(S, X) \geq \alpha n^{k-\ell}$. If $p \geq n^{-(k-\ell)-c}$, then with high probability there exist pairwise vertex-disjoint $\ell$-paths $Q_1, \dots, Q_s$ in $H_p^+$ with $|\bigcup_{r \in [s]} V(Q_r)| \leq bs$ and so that
for each~$r \in [s]$ the path $Q_r$ has $Q_r^{\pint} \subseteq X$ and ends $Q_r^{\pbeg} = a_r$ and $Q_r^{\pend} = b_r$.
\end{lemma}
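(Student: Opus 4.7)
The plan is to construct each $Q_r$ as an embedding of the $\ell$-path $P = F_t$ defined before the lemma, with $F^{\pbeg} \mapsto a_r$, $F^{\pend} \mapsto b_r$, and $F^{\pint}$ mapping into $X$. The key observation is that the chain $F_0 \subsetneq F_1 \subsetneq \cdots \subsetneq F_t$ partitions the edges of $P$ into the edges of $F_0$ (which form two $\ell$-path segments of lengths $t+1$ and $t$ joined by $t(k-\ell)-\ell$ isolated vertices), to be supplied by $H$, and the $t$ remaining edges $e_{t+1}, \dots, e_{2t}$, to be supplied by the random part of $H_p^+$. We process the pairs sequentially for $r = 1, \dots, s$; at step $r$ let $U_r := \bigcup_{r' < r} V(Q_{r'}) \cup \bigcup_{r' \ne r}(a_{r'} \cup b_{r'})$, so that $|U_r| \le bs \le b/\beta$ is a constant.

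\textbf{Step 1 (Extending with $H$).} We first count injective maps $\phi : [b] \to V(H) \setminus U_r$ that embed $F_0$ into $H$ with $\phi(F^{\pbeg}) = a_r$, $\phi(F^{\pend}) = b_r$ and $\phi(F^{\pint}) \subseteq X$. Since every $\ell$-set $S$ satisfies $\deg_H(S, X) \ge \alpha n^{k-\ell}$ and $|U_r|$ is a constant, we can greedily extend $a_r$ into $X \setminus U_r$ by $t$ edges of $H$, obtaining at least $(\alpha n^{k-\ell}/2)^t$ extensions, and similarly for $b_r$. The $t(k-\ell)-\ell$ isolated vertices of $F_0$ can then be chosen as an ordered sequence from $X$ in $\Omega(n^{t(k-\ell)-\ell})$ ways (the codegree hypothesis forces $|X| = \Omega(n)$). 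Altogether this produces a family $\Phi_r$ of valid $\phi$ with $|\Phi_r| = \Omega(n^{3t(k-\ell)-\ell})$.

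\textbf{Step 2 (Realising the remaining edges via $H_p^+$).} Couple so that $H_1 \cup \cdots \cup H_t \subseteq H_p^+$, with each $H_i$ independently drawn from $\hnpk$ at rate $p/t$, and dedicate $H_i$ to realising the $i$th of the edges $e_{2t}, e_{2t-1}, \ldots, e_{t+1}$ (mirroring the chain $F_0 \subsetneq F_1 \subsetneq \cdots \subsetneq F_t$). Arguing by induction on $i$ exactly as in the proof of Lemma~\ref{numberofrandompaths}, at each round the number of surviving embeddings shrinks by only a $\Theta(p/t)$-factor, up to a Chernoff-type deviation (Proposition~\ref{chernoffbound}) with failure probability at most $\exp(-\Omega(n^{\gamma}))$ for some $\gamma > 0$. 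After all $t$ rounds, with high probability the number of $\phi \in \Phi_r$ that realise every $e_j$ for $t+1 \le j \le 2t$ as an edge of $H_p^+$ is $\Omega(p^t |\Phi_r|) = \Omega(n^{2t(k-\ell)-\ell-ct})$, which tends to infinity because $c < 1/t$ and $t(k-\ell) \ge \ell$. Picking any such $\phi$ gives $Q_r$ as the image of $P$: by construction $Q_r$ is an $\ell$-path in $H_p^+$ with ends $a_r$ and $b_r$, $Q_r^{\pint} \subseteq X$, and $V(Q_r) \cap U_r = \emptyset$. A final union bound over the $s \le 1/\beta$ iterations completes the proof. The main technical work is the $t$-round Chernoff analysis, but this is essentially identical to (and slightly simpler than) the corresponding argument in the proof of Lemma~\ref{numberofrandompaths}, since here we only need a lower bound on the survivor count.
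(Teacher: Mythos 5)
Your overall strategy is the same as the paper's: greedily count embeddings of $F_0$ in $H$ with prescribed ends $a_r,b_r$ and interior in $X$ (using the codegree-into-$X$ hypothesis), then expose the random edges in $t$ rounds and run the inductive Chernoff argument in the style of Lemma~\ref{numberofrandompaths} to show that many of these embeddings acquire the missing edges $e_{2t},\dots,e_{t+1}$, and finally select the paths one by one, each time avoiding the constantly many previously used vertices. Two small slips in Step 1: $F_0$ has $t+1$ edges on the $a_r$-side ($e_0,\dots,e_t$) and $t$ on the $b_r$-side, so the correct count is $\Omega(n^{b-2\ell})=\Omega\bigl(n^{3t(k-\ell)+k-2\ell}\bigr)$ rather than $\Omega(n^{3t(k-\ell)-\ell})$; this only strengthens your bound and the survivor count after Step 2 remains polynomially large, so these slips are harmless.

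The genuine gap is in how you combine the $s$ steps. You expose $H_1,\dots,H_t$ once and then process $r=1,\dots,s$ sequentially, with $U_r$ defined in terms of the previously chosen paths $Q_{r'}$. Those paths are functions of the same random hypergraphs, so for $r\geq 2$ the event that many $\phi\in\Phi_r$ avoiding $U_r$ survive is not an event about a fixed avoided set: conditioning on the earlier successes and on which paths were selected biases the distribution of the edges you are about to use, and a union bound ``over the $s$ iterations'' is therefore not licensed as stated. The paper circumvents exactly this by proving the per-pair statement for every \emph{fixed} set $Z\subseteq X$ with $|Z|\leq bs$ and union bounding over all at most $s\,n^{bs}$ pairs $(r,Z)$ (its ``well-connected'' event), after which the greedy selection is deterministic; the per-pair failure probability $\exp(-\Omega(n^{1-tc}))$ easily absorbs the polynomial factor $n^{bs}$ since $b$ and $s\leq 1/\beta$ are constants. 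Your own failure bound $\exp(-\Omega(n^{\gamma}))$ supports the same fix (or, alternatively, you could use fresh exposure rounds, splitting $p$ into $st$ parts, one batch per $r$), so the argument is readily repaired, but as written the final union bound does not close the proof.
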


\begin{proof}
We begin with the following claim, which shows that for each $r \in [s]$ there are many copies of $F_0$ in $H$ with ends $a_r$ and $b_r$ whose interior vertices lie in $X$.

\begin{claim} \label{claim1}
For every $r \in [s]$ there are at least $\tfrac{1}{2}(\alpha n)^{b-2\ell}$ injective functions $\pi : [b] \to V(H)$ for which $\pi$ is an embedding of $F_0$ in $H$ with $\pi(F^{\pbeg}) = a_r$, $\pi(F^{\pend}) = b_r$ and $\pi(F^{\pint}) \subseteq X$. 
\end{claim}

To prove Claim~\ref{claim1}, we consider the possible ways to form a (not necessarily injective) function $\pi : [b] \to V(H)$ with $\pi(F^{\pbeg}) = a_r$, $\pi(F^{\pend}) = b_r$ and $\pi(F^{\pint}) \subseteq X$ and such that $\pi(e) \in H$ for every $e \in F_0$. First, for each $i \in [\ell]$ we set $\pi(i)$ to be the $i$th vertex of $a_r$, so $\pi(F^{\pbeg}) = a_r$ as required. Now observe that there are $k-\ell$ vertices in $e_0$ which are not in $F^{\pbeg}$. Since $\deg_H(a_r, X) \geq \alpha n^{k-\ell}$ by assumption, there are at least $\alpha n^{k-\ell}$ possible ways to choose $\pi(e_0 \sm F^{\pbeg})$ in $X$ so that $\pi(e_0)$ is an edge of $H$. In exactly the same way we find that for each $1 \leq i \leq t$, having fixed $\pi(e_0 \cup \dots \cup e_{i-1})$, since $\deg_H(\pi(e_{i-1}\cap e_i), X) \geq \alpha n^{k-\ell}$, there are at least $\alpha n^{k-\ell}$ ways to choose $\pi(e_i \sm e_{i-1})$ in $X$ so that $\pi(e_i)$ is an edge of $H$. Overall this gives us at least $(\alpha n^{k-\ell})^{t+1}$ ways to choose the images of the vertices covered by $e_0, \dots, e_t$. 

Next, for each $i \in [\ell]$ we set $\pi(b-\ell+i)$ to be the $i$th vertex of $b_r$, so $\pi(F^{\pend}) = b_r$, as required. Similarly as before there are then at least $\alpha n^{k-\ell}$ possible ways to choose $\pi(e_{3t} \sm F^{\pend})$ in $X$ so that $\pi(e_{3t})$ is an edge of $H$, and for each $2t+1 \leq i \leq 3t-1$, having fixed $\pi(e_{3t} \cup \dots \cup e_{i+1})$, there are at least $\alpha n^{k-\ell}$ ways to choose $\pi(e_i \sm e_{i+1})$ in $X$ so that $\pi(e_i)$ is an edge of $H$. Overall this gives us at least $(\alpha n^{k-\ell})^t$ ways to choose the images of the vertices covered by $e_{2t+1}, \dots, e_{3t}$. 

Finally there are $t(k-\ell) - \ell$ isolated vertices in $F_0$, and for each such vertex $v$ there are at least $\alpha n$ possibilities for $\pi(v)$ in $X$ (the fact that $|X| \geq \alpha n$ follows from our degree assumption). We conclude that in total the number of functions $\pi : [b] \to V(H)$ with $\pi(F^{\pbeg}) = a_r$, $\pi(F^{\pend}) = b_r$ and $\pi(F^{\pint}) \subseteq X$ such that $\pi(e) \in H$ for every $e \in F_0$ is at least
$$(\alpha n)^{(t+1)(k-\ell)+t(k-\ell)+(t(k-\ell) - \ell)} = (\alpha n)^{3t(k-\ell)+k-2\ell} =(\alpha n)^{b-2\ell}.$$
Every such function $\pi$ which is injective is an embedding of $F_0$ in $H$ with the properties described in the statement of the claim. Since the number of non-injective functions $\pi : [b] \to V(H)$ with $\pi(F^{\pbeg}) = a_r$ and $\pi(F^{\pend}) = b_r$ is at most $\binom{b}{2}n^{b-2\ell-1} \leq \tfrac{1}{2}(\alpha n)^{b-2\ell}$, the claim follows. \medskip

Our next claim uses an inductive argument to show that with high probability we can find a copy of $P$ in $H_p^+$ with prescribed ends and avoiding a given set of vertices. The proof of this claim is broadly similar to that of Lemma~\ref{numberofrandompaths}, but here we have $2\ell$ fixed vertices of the embedding (the ends of~$P$).

\begin{claim} \label{claim2}
Given $r \in [s]$ and a set $Z \subseteq X$ of size $|Z| \leq bs$, with probability $1-\exp(-\Omega(n^{1-tc}))$ there is a copy of $P$ in $H_p^+$ with ends $a_r$ and $b_r$ whose interior vertices all lie in~$X \sm Z$.
\end{claim}

To prove Claim~\ref{claim2} we use a multiple exposure argument with $t$ rounds. So let $q := p/t$ and for each $i \in [t]$ let $H_i$ be drawn independently from $\hnqk$. We may then assume that $H \cup \bigcup_{i \in [t]} H_i \subseteq H_p^+$. 

For each $0 \leq i \leq t$ define $\Pi_i$ to be the set of injective functions $\pi : [b] \to V(H)$ for which~$\pi$ is an embedding of $F_i$ in $H \cup \bigcup_{j \in [i]} H_j$ such that $\pi(F^{\pbeg}) = a_r$, $\pi(F^{\pend}) = b_r$, and $\pi(F^{\pint}) \subseteq X \sm Z$. We prove by induction on $i$ that, for each $0 \leq i \leq t$, with probability at least $1- \exp\left(-\Omega(n^{1-tc})\right)$ we have
\begin{equation} \label{eq:sizeofpi}
|\Pi_i| \geq \frac{(\alpha n)^{b-2\ell}q^i}{4\cdot 2^{i}}
\end{equation} 

Our base case $i = 0$ follows immediately from Claim~\ref{claim1}. Indeed, every injective function $\pi$ as in the statement of Claim~\ref{claim1} is an element of $\Pi_0$ unless the image of $\pi$ contains a vertex in~$Z$. However, since $|Z| \leq bs$, there are at most $b^2s n^{b-2\ell-1} \leq \tfrac{1}{4}(\alpha n)^{b-2\ell}$ functions $\pi : [b] \to V(H)$ with $\pi(F^{\pbeg}) = a_r$ and $\pi(F^{\pend}) = b_r$ which include a vertex of $Z$ in their image; this gives the desired inequality (with certainty). 

Suppose therefore that for some $i < t$ inequality~\eqref{eq:sizeofpi} holds with probability at least $1- \exp\left(-\Omega(n^{1-tc})\right)$, and observe that for any injection $\pi \in \Pi_i$ with $\pi(e_{2t-i}) \in H_{i+1}$ we have $\pi \in \Pi_{i+1}$. Write $\mathcal{S}:= \binom{V(H)}{k}$, and for each $S \in \mathcal{S}$ let $N_S$ denote the number of injections $\pi \in \Pi_i$ with $\pi(e_{2t-i}) = S$. 
Then $\sum_{S \in \mathcal{S}} N_S = |\Pi_i|$, and $N_S \leq k!n^{b - 2\ell - k}$ for each $S \in \mathcal{S}$ since the images of the vertices in $e_{2t-i} \cup F^{\pbeg} \cup F^{\pend}$ are fixed up to permutations of $e_{2t-i}$. 
Exposing the edges of $H_{i+1}$, let $N$ be the number of injections $\pi \in \Pi_i$ with $\pi(e_{2t-i}) \in H_{i+1}$. Then by Proposition~\ref{chernoffbound} and using our inductive hypothesis for the final inequality, we have  
$$N \geq \frac{\E(N)}{2} = \frac{q \sum_{S \in \mathcal{S}} N_S}{2} =  \frac{q|\Pi_i|}{2} \geq \frac{(\alpha n)^{b-2\ell}q^{i+1}}{4 \cdot 2^{i+1}}$$ 
with probability  
\begin{align*}
1- \exp\left(-\frac{2(\alpha n)^{b-2\ell}q^{i+1}}{4 \cdot 2^{i+1} \cdot 4k!n^{b-2\ell-k}}\right) 
\geq 1- \exp\left(-\Omega(p^tn^k)\right).
\end{align*}
Observe that $t(k-\ell) = \left(\kFrac-1\right)(k-\ell) \leq k-1$, so $p^tn^k \geq n^{k-t(k-\ell)-tc} \geq n^{1-tc}$. It follows that \eqref{eq:sizeofpi} holds for $i+1$ with probability $1 - \exp\left(-\Omega(n^{1-tc})\right)$, completing the induction argument.

In particular, the case $i=t$ of~\eqref{eq:sizeofpi} shows that with probability $1- \exp\left(-\Omega(n^{1-tc})\right)$ we have $\Pi_t > 0$. Since each $\pi \in \Pi_t$ is an embedding of $F_t = P$ in $H \cup \bigcup_{j \in [t]} H_j \subseteq H_p^+$ such that $\pi(F^{\pbeg}) = a_r$, $\pi(F^{\pend}) = b_r$, and $\pi(F^{\pint}) \subseteq X \sm Z$, this demonstrates the existence of a copy of $P$ in $H_p^+$ as in the statement of Claim~\ref{claim2}, and so completes the proof of the claim. \medskip

Returning to the proof of the lemma, say that $H_p^+$ is \emph{well-connected} if for every $r \in [s]$ and every set $Z \subseteq X$ of size $|Z| \leq sb$ there exists a copy of $P$ in $H_p^+$ with ends~$a_r$ and~$b_r$ whose interior vertices all lie in~$X \sm Z$. Taking a union bound over every such~$r$ and~$Z$, by Claim~\ref{claim2} and our assumption that $c < 1/t$ we find that the probability that $H_p^+$ is well-connected at least 
$$ 1 - s \cdot n^{sb} e^{-\Omega(n^{1-tc})} \geq 1 - o(1).$$
In other words, $H_p^+$ is well-connected with high probability. 

Finally, observe that if $H_p^+$ is well-connected then we can greedily 
choose vertex-disjoint $\ell$-paths $Q_r$ for $r \in [s]$ as desired.
Indeed, for each $r \in [s]$ in turn we choose a copy $Q_r$ of $P$ in $H_p^+$ as follows. Let $Z$ consist of all vertices covered by the previously-chosen paths $Q_1, Q_2, \dots, Q_{r-1}$, and choose a copy $Q_r$ of $P$ in $H_p^+$ with ends $a_r$ and $b_r$ in which all interior vertices lie in $X \sm Z$. Since $Z$ consists of at most $b$ vertices from each of at most $s$ previously-chosen copies of $P$, we have $|Z| \leq bs$, and so we may choose $Q_r$ in this way since $H_p^+$ is well-connected. Do this for every $r \in S$ and observe that since each path $Q_r$ has $b$ vertices we then have $|\bigcup_{r \in [s]} V(Q_r)| = bs$, as required.
\end{proof}

Given an ordered $\ell$-tuple $S = (x_1, \dots, x_\ell)$, we define $\overleftarrow{S} = (x_\ell, \dots, x_1)$, so $\overleftarrow{S}$ is the ordered $\ell$-tuple formed by reversing the order of $S$. This definition is motivated by the following observation.
Suppose that $Q_1, Q_2$ and $Q_3$ are $\ell$-paths such that $Q_i$ has ends $a_i := Q_i^{\pbeg}$ and $b_i := Q_i^{\pend}$ for each $i \in [3]$. Observe that if $b_1 = a_2$ and $b_2 = \overleftarrow{b_3}$, and the vertices of $Q_1$, $Q_2$ and $Q_3$ are otherwise distinct, then $Q_1Q_2Q_3$ (the $k$-graph with vertex set $\bigcup_{i \in [3]} V(Q_i)$ and edge set $\bigcup_{i \in [3]} V(Q_i)$) is an $\ell$-path with ends $a_1$ and $\overleftarrow{a_3}$ (think of traversing $Q_1$ and $Q_2$ from $a_1$ via $a_2=b_1$ to $b_2 = \overleftarrow{b_3}$, then traversing $Q_3$ `in reverse' from $\overleftarrow{b_3}$ to $\overleftarrow{a_3}$).

\begin{proof}[Proof of Lemma~\ref{Lemma:ConnectingPaths}]
Fix integers $2 \leq \ell < k$, define $t := \kFrac - 1$ and $b := 3(k-\ell)t+k$ and introduce constants with $\vartheta \ll \beta \ll \alpha, 1/k$. Let $H$ be a $k$-graph on~$n$ vertices and let $\paths$ be a collection of at most $\vartheta n$ vertex-disjoint $\ell$-paths in $H$ such that, writing $X := V(H) \sm \bigcup_{P \in \paths} V(P)$, for every set $S \in \binom{V(H)}{\ell}$ we have $\deg_H(S, X) \geq \alpha n^{k-\ell}$. Also fix $c < 1/t$ and $p \geq n^{-(k-\ell)-c}$. We begin with the following claim.

\begin{claim} \label{claim3}
Let $\mathcal{S}$ be a set of $m$ pairs $(x, y)$ for which both $x$ and $y$ are ordered $\ell$-tuples of vertices of $H$, and suppose that the sets $x \cup y$ for $(x, y) \in \mathcal{S}$ are pairwise vertex-disjoint. 
If $m > 1/\beta$ then there must exist distinct pairs $(a_1, b_1), (a_2, b_2) \in \mathcal{S}$ for which there are at least $\tfrac{1}{2} \beta^4 n^{b-2\ell}$ injective functions $\pi : [b] \to V(H)$ such that $\pi$ is an embedding of $F_1$ in $H$ with $\pi(F^{\pbeg}) = b_1$, $\pi(F^{\pend}) = \overleftarrow{b_2}$ and $\pi(F^{\pint}) \subseteq X$. Moreover, for each set $Z \subseteq X$ of size $|Z| \leq b\vartheta n$, with probability $1-\exp(-\Omega(n^{2-tc}))$ there is a copy of $P$ in $H_p^+$ with ends $b_1$ and $\overleftarrow{b_2}$ whose interior vertices all lie in~$X \sm Z$.
\end{claim}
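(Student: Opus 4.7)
The plan is to prove Claim~\ref{claim3} in two stages that closely parallel Claims~\ref{claim1} and~\ref{claim2}: first a deterministic count of embeddings of $F_1$ in $H$, and then a multi-round exposure of $H_p^+$ that upgrades one such embedding to a copy of $P$ avoiding the given set $Z$.

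For Stage~1 I would, for each pair $(a,b) \in \mathcal{S}$, apply the degree hypothesis together with the greedy construction of Claim~\ref{claim1} to produce at least $\tfrac12 (\alpha n)^{(t+1)(k-\ell)}$ \emph{left-extensions}---vertex sequences of $\ell$-paths of length $t+1$ starting at $b$ with all new vertices in $X \sm \bigcup_{(a',b') \in \mathcal{S}}(a' \cup b')$---and, analogously, at least $\tfrac12(\alpha n)^{(t+1)(k-\ell)}$ \emph{right-extensions} ending at $\overleftarrow{b}$ (built greedily backwards from $\overleftarrow{b}$). Each extension carries an \emph{overlap tip} $d$: the ordered $q$-tuple of vertices at the ambient positions $2t(k-\ell)+1, \dots, k+t(k-\ell)$, i.e.\ the vertices of $e_t \cap e_{2t}$ in $F_1$ (where $q := k - t(k-\ell)$). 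Writing $L(b_1,d)$ for the number of left-extensions from $b_1$ and $R(b_2,d)$ for the number of right-extensions ending at $\overleftarrow{b_2}$ with overlap tip $d$, an embedding of $F_1$ with $\pi(F^{\pbeg}) = b_1$ and $\pi(F^{\pend}) = \overleftarrow{b_2}$ decomposes into a compatible pair of extensions sharing the overlap tip, so the embedding count for fixed $(b_1,b_2)$ is $\sum_d L(b_1,d) R(b_2,d)$ up to an $o(n^{b-2\ell})$ correction for injectivity between the two halves.

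Summing over all ordered pairs of distinct elements of $\mathcal{S}$, set $M_L(d) := \sum_{(a,b)} L(b,d)$ and $M_R(d) := \sum_{(a,b)} R(b,d)$, each with total mass at least $\tfrac m2 (\alpha n)^{(t+1)(k-\ell)}$. A Cauchy--Schwarz-type bound on $\sum_d M_L(d) M_R(d)$, combined with a symmetrization using the path-reversal identification $R(b,d) = L(b,\overleftarrow{d})$ (decomposing $M_L$ into its symmetric and antisymmetric components under the involution $d \mapsto \overleftarrow{d}$ and absorbing the antisymmetric loss into the slack $\beta \ll \alpha$), yields an off-diagonal total of at least $c\, m^2 \alpha^{O(1)} n^{b-2\ell}$ for some constant $c = c(k,\ell) > 0$. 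The diagonal (same-pair) contribution is at most $m\, n^{b-2\ell}$ by trivial bounds, hence negligible for $m > 1/\beta$; averaging over the $m(m-1)$ ordered pairs produces specific distinct $(a_1,b_1), (a_2,b_2) \in \mathcal{S}$ achieving at least $\tfrac12 \beta^4 n^{b-2\ell}$ embeddings of $F_1$ with the required ends and interior in $X$.

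For Stage~2, each $F_1$-embedding is a candidate to extend via the $t-1$ random edges $e_{t+1}, \dots, e_{2t-1}$ of $P \sm F_1$ to a full $P$-embedding in $H_p^+$. Running a multi-round exposure with $t-1$ rounds and applying Proposition~\ref{chernoffbound} inductively (as in the proof of Claim~\ref{claim2}) shows that a constant fraction of candidates survives each round, giving probability $1 - \exp(-\Omega(n^{2-tc}))$ of a surviving full $P$-embedding (the stronger exponent compared to Claim~\ref{claim2} arises because the initial candidate count is $\Omega(n^{b-2\ell})$ and only $t-1$ rounds are needed). To accommodate the forbidden set $Z$ with $|Z| \le b\vartheta n$, I would discard in advance those $F_1$-embeddings whose interior meets $Z$: each vertex of $Z$ lies in at most $O(n^{b-2\ell-1})$ embeddings, so at most $O(\vartheta n^{b-2\ell})$ are lost, which is negligible relative to $\tfrac12 \beta^4 n^{b-2\ell}$ since $\vartheta \ll \beta$. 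The main obstacle is the symmetrization in Stage~1: because the left- and right-extensions are traversed in opposite directions, the relevant bilinear form effectively reads $\sum_d M_L(d) M_L(\overleftarrow{d})$, which a naive Cauchy--Schwarz does not lower-bound (it can even vanish for adversarially-concentrated $M_L$), and threading the symmetric/antisymmetric decomposition through while keeping constants inside the slack $\beta \ll \alpha$ is the most delicate bookkeeping in the argument.
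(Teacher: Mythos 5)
Your Stage 2 (expose $t-1$ random rounds for the $t-1$ missing edges $e_{t+1},\dots,e_{2t-1}$, apply Proposition~\ref{chernoffbound} inductively, and discard in advance the $O(\vartheta n^{b-2\ell})$ embeddings of $F_1$ whose interior meets $Z$) is essentially the paper's argument for the ``moreover'' part, and the exponent $n^{2-tc}$ comes out correctly because the worst round only requires $q^{t-1}n^k \geq n^{2-tc}$. The genuine gap is in Stage 1, and it is precisely the point you flag but do not resolve. After aggregating, the quantity you must bound below is $\sum_d M_L(d)M_L(\overleftarrow{d})$ (minus the diagonal), and, as you say, Cauchy--Schwarz does not lower-bound this for general nonnegative $M_L$: if $M_L$ were supported on a family of ordered $r$-tuples disjoint from its reversal, the form would essentially vanish. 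Your proposed remedy --- decomposing into symmetric and antisymmetric parts and ``absorbing the antisymmetric loss into the slack $\beta \ll \alpha$'' --- is not an argument: the form equals $\|M^{\mathrm{sym}}\|_2^2-\|M^{\mathrm{anti}}\|_2^2$, and nothing in the choice of constants controls $\|M^{\mathrm{anti}}\|_2$, which a priori can be of the same order as $\|M^{\mathrm{sym}}\|_2$. So the key inequality of Stage 1 is unproved as written.

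The gap can be closed, in either of two ways. The structural fact you are missing is that the overlap tip occupies the $r:=k-t(k-\ell)$ positions of $e_t\cap e_{2t}$, and since $t(k-\ell)\geq \ell$ these positions meet no edge of the half-path other than its last edge, and are disjoint from the fixed end positions $1,\dots,\ell$; as edges are unordered sets, permuting the images of the tip positions is a bijection on half-path embeddings, so $L(b,d)$ depends only on the underlying $r$-set of $d$. Hence $M_L(\overleftarrow{d})=M_L(d)$, your bilinear form is just $\sum_d M_L(d)^2$, and plain Cauchy--Schwarz plus your diagonal/averaging step (which is fine, using $m>1/\beta$ and $\beta\ll\alpha,1/k$) completes Stage 1. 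The paper instead sidesteps the issue entirely: it never aggregates over pairs or uses Cauchy--Schwarz, but shows each pair has at least $3\beta n^r$ ``popular'' tips, each admitting at least $\beta n^{(t+1)(k-\ell)-r}$ forward extensions, applies inclusion--exclusion to roughly $1/\beta$ of the pairs to find two distinct pairs whose popular-tip sets share at least $\beta^2 n^r$ tips, and glues two forward extensions having the \emph{same} tip, reversing one of them positionally; only set-equality of the tips is needed for $\pi(e_{2t})\in H$, so no reversal mismatch ever arises, and the count $\beta^2 n^r\cdot(\beta n^{(t+1)(k-\ell)-r})^2=\beta^4 n^{b-2\ell}$ follows directly before the non-injective maps are discarded.
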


To prove Claim~\ref{claim3}, consider any $(x, y) \in \mathcal{S}$, and observe that by the same argument as in the proof of Claim~\ref{claim1} there are at least $(\alpha n)^{(t+1)(k-\ell)}$ maps $\phi: e_0 \cup \dots \cup e_{t} \to y \cup X$ such that $\phi(F^{\pbeg}) = y$ and $\phi(e_i) \in H$ for every $0 \leq i \leq t$. Let $r = |e_{t} \cap e_{2t}|$. It then follows that there are at least $3\beta n^{r}$ ordered $\ell$-tuples $S$ of vertices of $V$ which have $\phi(e_{t+1} \cap e_{2t+1}) = S$ for at least $\beta n^{(t+1)(k-\ell)-r}$ such maps $\phi$. Let $\mathcal{S}(x,y)$ denote the set of all such $S$. If $|\paths| > 1/\beta$ then by inclusion-exclusion we can choose distinct $(a_1, b_1), (a_2, b_2) \in \mathcal{S}$ for which $|\mathcal{S}(a_1,b_1) \cap \mathcal{S}(a_2,b_2)| \geq \beta^2 n^{r}$. Then for any $\phi \in \mathcal{S}(a_1,b_1)$ and $\phi' \in \mathcal{S}(a_2,b_2)$ we may define a map $\pi : [b] \to V(H)$ by
$$\pi(x) = \begin{cases}
\phi(x) & \mbox{ if $x \in e_0 \cup \dots \cup e_{t}, $}  \\
\phi'(b+1-x) & \mbox{ otherwise.}
\end{cases}$$
Since no edge of $F_1$ intersects $e_{t} \cap e_{2t}$ other than $e_{t}$ and $e_{2t}$ we then have $\pi(e) \in H$ for every $e \in F_1$. Moreover $\pi(F^{\pbeg}) = b_1$, $\pi(F^{\pend}) = \overleftarrow{b_2}$ and $\pi(F^{\pint}) \subseteq X$. In this way we obtain $$\beta^2 n^{r}\cdot (\beta n^{(t+1)(k-\ell)-r})^2 = \beta^4 n^{b-2\ell}$$ 
such maps $\pi$, since $r = k-t(k-\ell)$ and $b = 3t(k-\ell)+k$. Since there are at most $b^2n^{b-2\ell-1} \leq \tfrac{1}{2} \beta^4 n^{b-2\ell}$ maps $\pi : [b] \to V(H)$ such that $\pi(F^{\pbeg}) = b_1$ and $\pi(F^{\pend}) = \overleftarrow{b_2}$ which are not injective, this completes the proof of the first statement of the claim. 

Now fix $Z \subseteq X$ of size $|Z| \leq b\vartheta n$. Set $H_1 := H$ and let $H_2, \dots, H_t$ be independently drawn from $\hnqk$, where $q = p/k$. We may then assume that $\bigcup_{i \in [t]} H_i \subseteq H_p^+$. For each $i \in [t]$ define $\Pi_i$ to be the set of injective functions $\pi : [b] \to V(H)$ for which~$\pi$ is an embedding of $F_i$ in $\bigcup_{j \in [i]} H_j$ such that $\pi(F^{\pbeg}) = b_1$, $\pi(F^{\pend}) = \overleftarrow{b_2}$, and $\pi(F^{\pint}) \subseteq X \sm Z$. Arguing by induction on $i$  then shows that for each $i \in [t]$, with probability at least $1- \exp\left(-\Omega(n^{2-tc})\right)$ we have
\begin{equation} \label{eq:sizeofpi2}
|\Pi_i| \geq \frac{\beta^4 n^{b-2\ell}q^{i-1}}{2^i}
\end{equation}
Indeed, the argument is almost identical to that for Claim~\ref{claim3}, with the exception that the base case is now the case $i=1$, for which~\eqref{eq:sizeofpi2} holds with certainty by the first part of this claim. The crucial point is that $|\Pi_i|$ is larger by a factor of $\Theta(q^{-1})$ in~\eqref{eq:sizeofpi2} compared to \eqref{eq:sizeofpi}, and so for each $i \in [t]$ we find that~\eqref{eq:sizeofpi2} holds with probability $1- \exp(-\Omega(n^{p^{t-1}n^k}))$ instead of $1- \exp(-\Omega(n^{p^{t}n^k}))$ as before. Since $p^{t-1}n^k \geq 2-tc$, this completes the proof of the claim.
\medskip

%Given $\ell$-paths $P_1, P_2$ in $\paths$ as in Claim~\ref{claim3}, observe that if $H_p^+$ contains a copy of $P$ with ends $P_1^{\pend}$ and $\overleftarrow{P_2}^{\pend}$, then $P_1P\overleftarrow{P_2}$ is an $\ell$-path in $H_p^+$ with ends $(P_1P\overleftarrow{P_2})^{\pbeg} = P_1^{\pbeg}$ and $(P_1P\overleftarrow{P_2})^{\pend} = \overleftarrow{P_2}^{\pbeg}$ which contains both $P_1$ and $P_2$ as path segments (note that the vertex sequence of $P_2$. In anticipation of connecting paths in this way, let $P_1\overleftarrow{P_2}$ denote the $k$-graph with vertex set $V(P_1) \cup V(P_2)$ and edge set $E(P_1) \cup E(P_2)$, and define ordered $\ell$-tuples $(P_1\overleftarrow{P_2})^{\pbeg} := P_1^{\pbeg}$ and $(P_1\overleftarrow{P_2})^{\pend} := \overleftarrow{P_2^{\pend}}$ which we call the ends of $P_1\overleftarrow{P_2}$. So $P_1\overleftarrow{P_2}$ is not a path in $H$, but it contains both $P_1$ and $P_2$ as path segments, and with high probability induces a path with the same ends in $H_p^+$. For this reason we treat $P_1\overleftarrow{P_2}$ as a path with ends $P_1\overleftarrow{P_2})^{\pbeg}$ and $(P_1\overleftarrow{P_2})^{\pend}$ (in particular we can apply Lemma~\ref{connectfewpaths} as though this were the case).

Now set $s := |\paths|$, and set $\mathcal{E}_0 := \{(P^{\pbeg}, P^{\pend}) : {P \in \paths}\}$ and $\mathcal{F}_0 : = \emptyset$. We proceed iteratively through $s$ steps to identify how we shall connect the paths in $\paths$ into our desired $\ell$-cycle.
At each step $i\geq 1$, if $|\mathcal{E}_{i-1}| > 1/\beta$, we choose distinct pairs $(a_1, b_1), (a_2, b_2) \in \mathcal{E}_{i-1}$ as in Claim~\ref{claim3}, and set 
$$ \mathcal{E}_i = (\mathcal{E}_{i-1} \cup \{(a_1, \overleftarrow{a_2})\}) \sm \{(a_1, b_1), (a_2, b_2)\} 
\mbox{ and } \mathcal{F}_i = \mathcal{F}_{i-1} \cup \{(b_1, \overleftarrow{b_2})\}.$$
On the other hand, if $2 \leq |\mathcal{E}_{i-1}| \leq 1/\beta$ then we arbitrarily choose  distinct pairs $(a_1, b_1), (a_2, b_2) \in \mathcal{E}_{i-1}$ and set
$$ \mathcal{E}_i = (\mathcal{E}_{i-1} \cup \{(a_1, b_2)\}) \sm \{(a_1, b_1), (a_2, b_2)\} 
\mbox{ and } \mathcal{F}_i = \mathcal{F}_{i-1} \cup \{(b_1, a_2)\}.$$ 
Finally, if $|\mathcal{E}_{i-1}| = 1$ then let $(a_1, b_1)$ be the unique element of $\mathcal{E}_{i-1} = 1$ and set $\mathcal{E}_i = \emptyset$ and $\mathcal{F}_i = \mathcal{F}_{i-1} \cup \{(b_1, a_1)\}$. Since $|\mathcal{E}_0| = s$ and $|\mathcal{E}_i| = |\mathcal{E}_{i-1}| - 1$ for each $i \in [s]$ we find that $\mathcal{E}_s$ is empty, and we terminate the iteration at the end of this step.

Let $z = s- \lfloor 1/\beta \rfloor$, so the final $z$ steps were those at which the pairs were chosen arbitrarily rather than by using Claim~\ref{claim3}. Taking a union bound over all the $z \leq s \leq \theta n$ pairs $(x, \overleftarrow{y}) \in \mathcal{F}_{z}$ and all sets $Z \subseteq X$ of size at most $b \theta n$, we find with high probability that for every pair $(x, \overleftarrow{y}) \in \mathcal{F}_z$ and every set $Z \subseteq X$ of size at most $b \theta n$ there exists a copy $Q_{(x, \overleftarrow{y})}$ of $P$ in $H_p^+$ with ends $Q_{(x, \overleftarrow{y})}^{\pbeg} = x$ and $ Q_{(x, \overleftarrow{y})}^{\pend} = \overleftarrow{y}$ whose interior vertices all lie in $X \sm Z$. Furthermore, Lemma~\ref{connectfewpaths} tells us that with high probability there exists a collection $\mathcal{Q}$ of $s-z$ pairwise vertex-disjoint $\ell$-paths $Q_{(x,y)}$ in $H_p^+$ for each pair $(x, y) \in \mathcal{F}_s \sm \mathcal{F}_{z}$ such that $Q_{(x, y)}$ has ends $Q_{(x,y)}^{\pbeg} = x$ and $Q_{(x, y)}^{\pend} = y$ and so that $Q^{\pint}_{(x, y)} \subseteq X \sm Z$, and moreover that the paths in $\mathcal{Q}$ cover at most $b(s-z)$ vertices. Fix an outcome of our random formation of $H_p^+$ for which both these events occur. Having done this, we choose a copy $Q_{(x, \overleftarrow{y})}$ of $P$ in $H_p^+$ for each pair $(x, \overleftarrow{y}) \in \mathcal{F}_z$ with ends $Q_{(x, \overleftarrow{y})}^{\pbeg} = x$ and $ Q_{(x, \overleftarrow{y})}^{\pend} = \overleftarrow{y}$ and with $Q_{(x, \overleftarrow{y})}^{\pint} = X \sm Z$ so that the chosen copies are pairwise vertex-disjoint and do not intersect any path in $\mathcal{Q}$. Indeed, we can make these choices greedily, at each step taking the set $Z \subseteq X$ to consist of all vertices in $X$ which are contained in a member of $\mathcal{Q}$ or a previously-chosen $Q_{(x, \overleftarrow{y})}$, and we then have $|Z| \leq sb \leq b\theta n$. Having done so, we add all of these chosen copies to $\mathcal{Q}$.

We are now ready to form our desired cycle. For this we initially take $\paths_0 = \paths$, and repeat each step of the iterative process above in turn.  At each step $i \in [z]$ we chose some $(a_1, b_1)$ and $(a_2, b_2)$ in $\mathcal{E}_{i-1}$ and added the pair $(b_1, \overleftarrow{b_2})$ to~$\mathcal{F}_{i-1}$ to form $\mathcal{F}_i$, and we now let $P_1, P_2 \in \paths_{i-1}$ be the paths in $\paths_{i-1}$ with ends $a_1$ and $b_1$ and ends $a_2$ and $b_2$ respectively, so since $(b_1, \overleftarrow{b_2}) \in \mathcal{F}$ there is a path $Q \in \mathcal{Q}$ with ends $b_1$ and $\overleftarrow{b_2}$. Similarly, at each step $i \in [s] \sm [z]$ we chose some $(a_1, b_1)$ and $(a_2, b_2)$ in $\mathcal{E}_{i-1}$ and added the pair $(b_1, a_2)$ to~$\mathcal{F}_{i-1}$ to form $\mathcal{F}_i$, and we now let $P_1, P_2 \in \paths_{i-1}$ be the paths in $\paths_{i-1}$ with ends $a_1$ and $b_1$ and ends $a_2$ and $b_2$ respectively, so since $(b_1, a_2) \in \mathcal{F}$ there is a path $Q \in \mathcal{Q}$ with ends $b_1$ and $a_2$. In either case we form $\paths_i$ from $\paths_{i-1}$ by removing $P_1$ and $P_2$ and adding $P_1QP_2$, which has ends $a_1$ and $\overleftarrow{a_2}$ in the former case and ends $a_1$ and $b_2$ in the latter case. By induction on $i$ it follows that for each $0 \leq i \leq s-1$ the set $\paths_i$ is a collection of $s-i$ vertex-disjoint $\ell$-paths in $H_p^+$ for which the elements of $\mathcal{E}_i$ are precisely the pairs of ends of members of $\paths_i$ and such that every path in $\paths$ is a path segment of some path in $\paths_i$ (the case $i=0$ is provided by our choice of $\mathcal{E}_0$ and $\paths_0 = \paths$). The case $i=s-1$ then yields that $\paths_{s-1}$ consists of a single $\ell$-path $P^*$ in $H_p^+$ which contains every path in $\paths$ as a path segment and whose ends are $a^*$ and $b^*$, where $(a^*, b^*)$ is the unique element of $\mathcal{E}_{s-1}$. Since $(a^*, b^*)$ must then have been added to $\mathcal{F}_s$ in the final step, it follows that there is an $\ell$-path $Q^* \in \mathcal{Q}$ with ends $b^*$ and $a^*$, and $C = P^*Q^*$ is then an $\ell$-cycle in $H_p^+$ which contains every path in $\paths$ as a path segment. We also have $|V(C) \cap X| \leq b\theta n \leq 4k\theta n$ since the only vertices used from $X$ are the at most $b$ vertices of $X$ contained in each of the at most $\theta n$ paths in $\mathcal{Q}$. 
\end{proof}

\section{Concluding remarks}\label{sec:conclusion}

Our proof of Theorem~\ref{main} used an absorbing argument in which both the connecting and absorbing structures were `composite' structures formed partly of non-random edges in the $k$-graph $H$ and partly of random edges added to form $H_p^+$. We are not aware of any previous uses of this approach, but we believe it may prove useful for a range of related problems.

It is not too difficult to modify the proof of Theorem~\ref{main} to give an alternative proof of Theorem~\ref{perturbed1cycles}. Indeed, given integers $k$ and $m$, for $p = O(n^{-(k-1)})$ it is straightforward to show that the random $k$-graph $\hnpk$ admits an almost-spanning collection of vertex-disjoint $1$-paths of length~$m$. Moreover, it is not too difficult to adapt the proofs of Lemmas~\ref{Lem:Absorbing_Paths} and~\ref{Lemma:ConnectingPaths} to show that each of these statements holds for $1$-paths also in this probability regime. Finally slight changes are needed to the proof of Theorem~\ref{main} to reflect that the path-tiling is almost-spanning rather than spanning, but this presents no great difficulty.

While we were finalising this paper two  further advances on large structures in randomly-perturbed graphs were publicised: Balogh, Treglown and Wagner~\cite{BTW} proved an analogue of Theorem~\ref{perturbedcyclesgraphs} for perfect $H$-tilings, whilst B\"ottcher, Montgomery, Parczyk and Person~\cite{BMPP} gave a similar result for any spanning graph of bounded maximum degree. 

It is natural to ask whether Theorems~\ref{perturbed1cycles} and~\ref{main} remain valid if we replace the minimum $\ell'$-degree condition by a weaker type of minimum degree condition. In particular, do analogous results hold if we instead assume the minimum vertex degree condition $\delta_1(H) \geq \alpha n^{k-1}$?

\providecommand{\bysame}{\leavevmode\hbox to3em{\hrulefill}\thinspace}

\end{document}